\theoremstyle{plain}
\newtheorem{thm}{Theorem}[section]
\newtheorem{cor}[thm]{Corollary}
\newtheorem{lem}[thm]{Lemma}
\newtheorem{prop}[thm]{Proposition}
\newtheorem{defn}[thm]{Definition}
\newtheorem{exa}[thm]{Example}
\newtheorem{rem}[thm]{Remark}
 \def\s{\sum}   
\def\Hom{\operatorname {Hom}}  
\def\RHom{\operatorname {RHom}}
 \def\k{\mathbbm{k}}
\begin{document}
\title{\bf derived Picard groups of homologically smooth Koszul DG algebras}
\author{X.-F. Mao}
\address{Department of Mathematics, Shanghai University, Shanghai 200444, China}
\email{xuefengmao@shu.edu.cn}

\author{Y.-N. Yang}
\address{Department of Mathematics, Shanghai University, Shanghai 200444, China}
\email{mooly@shu.edu.cn}

\author{J.-W. He}
\address{Department of Mathematics, Hangzhou Normal University,  16 Xuelin Rd, Hangzhou Zhejiang 310036, China}
\email{jwhe@hznu.edu.cn}
\date{}

\begin{abstract}
In this paper, we show that the derived Picard group of a homologically smooth Koszul connected cochain DG algebra is isomorphic to the opposite group of the derived Picard group of its finite dimensional local Ext-algebra. As applications, we compute the derived Picard groups of some important DG algebras such as
trivial DG polynomial algebras, trivial DG free algebras and several non-trivial DG down-up algebras and DG free algebras.
 \\
{\bf Keywords}: DG algebra,  Koszul,
homologically smooth, derived Picard group \\
{\bf MSC(2010)}: Primary 16E10,16E30,16E45,16E65
\end{abstract}
\maketitle

\section*{Introduction}
The derived Picard group of an algebra is introduced independently by Yekutieli and Rouquier-Zimmermann \cite{Yek1,RZ,Zim}.
Due to the well known Rickard's theory \cite{Ric1,Ric2}, one sees that the derived Picard group of an algebra is an invariant of the derived category of algebras, which are projective over a base ring $R$. In \cite{Kel4}, Keller interprets Hochschild cohomology as the Lie algebra of the derived Picard group and
deduce that it is preserved under derived equivalences.
There has been many works on properties and computations of derived Picard groups of various kind of algebras. For examples, the derived Picard group of a commutative unital ring has been computed in some cases \cite{Har,RZ,Yek1,Fau}, some properties of the derived Picard groups of an order are given in \cite{Zim}, the derived Picard group of a finite dimensional algebra over an algebraic closed field is proved to be a locally algebraic group in \cite{Yek2},  and some structures and calculations of the derived Picard groups of finite dimensional hereditary algebras over an algebraic closed field are presented in \cite{MY}. The case of a few particular algebras were studied by geometric means, using that the derived category of coherent sheaves of certain varieties is equivalent to the derived category of a specific algebra. Recently, Volkov and Zvonareva \cite{VZ} completely determine the derived Picard group of a self-injective Nakayama algebra, which involves braid groups and extension of these.
  It is also interesting to consider the case of DG algebras.
For graded commutative DG algebras, Yekutieli \cite{Yek3} has done some research on their derived Picard groups and dualizing DG modules.

This paper deals with connected cochain DG algebras with particular emphasis on the non-commutative aspects. In the context, there has been many papers on various homological properties of them. Among those properties,  the homologically smoothness of a DG algebra is especially important.  Recall that a connected cochain DG algebra $A$ is called homologically smooth, if ${}_{A}k$, or equivalently the DG $A^e$-module $A$ is compact (cf. \cite[Corollary 2.7]{MW3}).
In the DG context, it is analogous to the regular property of graded algebras.
The first author and Wu \cite{MW2} show that any homologically
smooth connected cochain DG algebra $A$ is cohomologically unbounded
unless $A$ is quasi-isomorphic to the simple algebra $k$.
And it is
proved in \cite{MW2} that the $\mathrm{Ext}$-algebra
 of a homologically smooth DG algebra $A$
is Frobenius if and only if both $\mathscr{D}^b_{lf}(A)$ and
$\mathscr{D}^b_{lf}(A\!^{op})$ admit Auslander-Reiten triangles, where $\mathscr{D}^b_{lf}(A)$ is the full subcategory of $\mathscr{D}(A)$ consisting of DG $A$-modules whose cohomology modules are finite-dimensional (bounded and locally finite-dimensional) as $k$-vector spaces.  In \cite{HW},  the third author and Wu introduce the concept of
Koszul DG algebras. By the definition, a connected cochain DG algebra is called Kozul, if ${}_Ak$, or equivalently ${}_{A^e}A$, has a minimal semi-free resolution with a semi-basis concentrated in degree $0$ (cf. \cite{HW}). For homologically smooth, Gorenstein and Koszul DG algebras, they obtain a DG version of Koszul duality.
Besides these, some important classes of DG algebras are homologically smooth. For example, Calabi-Yau DG algebras introduced by Ginzburg in \cite{Gin} are homologically smooth by definition.  Recall that a connected cochain DG algebra $A$ is called an $n$-Calabi-Yau DG algebra, if $A$ is homologically smooth and $$R\Hom_{A^e}(A, A^e)\cong
\Sigma^{-n}A$$ in the derived category $\mathscr{D}((A^e)^{op})$ of right DG $A^e$-modules (cf. \cite{Gin,VdB}).  Especially, non-trivial Noetherian DG down-up algebras and DG polynomial algebras are  Calabi-Yau DG algebras by \cite{MHLX} and \cite{MGYC}, respectively.

 The motivation of this paper is to study the derived Picard group of a homologically smooth Koszul connected cochain DG algebra.
We show the following theorem (see Theorem \ref{dpd}). \\
\begin{bfseries}
Theorem \ A.
\end{bfseries}
Let $A$ be a homologically smooth Koszul DG algebra with Ext-algebra $\mathcal{E}$. Then we have a group isomorphism $$\mathrm{DPic}(A)\cong \mathrm{DPic}(\mathcal{E}).$$

Each DG algebra is a graded algebra with a differential
satisfying Leibniz rule.  Hence its properties are determined by the
joint effects of this two intrinsic structures. In general, the derived Picard group of a DG algebra is much more complicated than that of a finite dimensional local algebra. Fortunately,
the derived Picard group of a finite dimensional local algebra $\mathcal{E}$ is isomorphic to $\Bbb{Z}\times \mathrm{Pic}_k (\mathcal{E})$ (cf. \cite{Zim,RZ}), and it is a classical fact that
$\mathrm{Pic}_k (\mathcal{E})\cong \mathrm{Out}_k(\mathcal{E})$, which is attributed to Fr\"{o}hlich \cite{Fro} and can also be found in \cite[Theorem 37.16]{Rei}.
Hence
Theorem A gives an efficient way to compute the derived Picard group of a Koszul, homologically smooth connected cochain DG algebra.

Let $(A,\partial_A)$ be a connected cochain DG algebra.
For any cocycle element $z\in \mathrm{ker}(\partial_{A}^i)$, we write $\lceil z \rceil$ as the cohomology class in $H(A)$ represented by $z$.
Applying Theorem A, we compute in Section \ref{gradedreg} the derived Picard group of $A$, when $H(A)$ belongs to any one of the following $4$ cases:
\begin{enumerate}
\item $H(A)=k[\lceil x\rceil ], x\in \mathrm{ker}(\partial_A^1)$; \\
\item $H(A)=k\langle \lceil x_1\rceil, \cdots, \lceil x_n\rceil \rangle, x_1,\cdots, x_n\in \mathrm{ker}(\partial_A^1)$;                                                 \\
\item $H(A)=\k[ \lceil x_1\rceil,\lceil x_2\rceil], x_1,x_2\in \mathrm{ker}(\partial_A^1)$; \\
\item $H(A)=\k\langle \lceil x_1\rceil,\lceil z_2\rceil\rangle/(\lceil x_1\rceil \lceil x_2\rceil+\lceil x_2\rceil \lceil x_1\rceil ), x_1,x_2\in \mathrm{ker}(\partial_A^1)$.
\end{enumerate}
One sees that $H(A)$ is a Koszul regular graded algebra in any one of the four cases listed above. In the set of homologically smooth Koszul connected cochain DG algebras, there are DG algebras, whose cohomology graded algebras are neither Koszul nor regular. One can see such examples from \cite[Example 2.11]{MW2}, \cite[Proposition 6.5]{MXYA} and \cite[Proposition 6.1]{MHLX}. In Section \ref{special}, we compute the derived Picard groups of the following list of DG algebras:
\begin{center}
\begin{tabular}{|l|l|l|}
  \hline
  No &\quad\quad\quad\quad $A^{\#}$ & \quad\quad\quad\quad $\partial_{A}$ \\ \hline
  $1$ & $k\langle x_1,x_2\rangle, |x_1|=|x_2|=1$ &  $\begin{cases}\partial_{A}(x_1)=x_2^2\\
                                                                  \partial_{A}(x_2)=0
                                                                  \end{cases}$\\
  $2$ & $k\langle x_1,x_2\rangle, |x_1|=|x_2|=1$ &   $\begin{cases}\partial_{A}(x_1)=x_1x_2+x_2x_1-x_1^2-x_2^2\\
                                                                  \partial_{A}(x_2)=x_1x_2+x_2x_1-x_1^2-x_2^2
                                                        \end{cases}$\\
  $3$ & $k\langle x_1,x_2\rangle/(x_1x_2+x_2x_1), |x_1|=|x_2|=1$ &  $\begin{cases}\partial_{A}(x_1)=x_2^2\\
                                                                  \partial_{A}(x_2)=0
                                                                  \end{cases}$\\
  $4$ & $k\langle x_1,x_2\rangle/(x_1x_2+x_2x_1), |x_1|=|x_2|=1$ &  $\begin{cases}\partial_{A}(x_1)=x_1^2+x_2^2\\
                                                                  \partial_{A}(x_2)=x_1^2+x_2^2
                                                                  \end{cases}$\\
  $5$ & $\frac{k\langle x,y\rangle}{\begin{small}\left(\begin{array}{c}
   x^2y+(1-\xi)xyx-\xi yx^2  \\
   xy^2+(1-\xi)yxy-\xi y^2x  \\
\end{array}\right)\end{small}},|x|=|y|=1$ &  $\begin{cases}\partial_{A}(x)=y^2\\
                                                                  \partial_{A}(y)=0\\
                                                                  \end{cases}$\\
  \hline
\end{tabular}.
\end{center}

\section{Notation and conventions}
We assume that the reader is familiar with basics on differential graded homological algebra. If this is not the case,  we refer
to \cite{FHT,Kel2,HMS,MW2} for more details. Throughout this paper, $k$ will denote a fixed algebraically closed field.
For any $k$-vector space $V$,  we write $V^*=\Hom_{k}(V,k)$. For any $i\in \Bbb{Z}$ and graded $k$-space $V$, we denote $\Sigma^i V$ its $i$-th suspension, which is the graded $k$-vector space
$\Sigma^i M$ defined by $(\Sigma^iM)^j = M^{j+i}$. We set $\Sigma M = \Sigma^1 M$.
\subsection{DG algebras and DG coalgebras}
A cochain DG algebra $A$ over $k$ is a DG $k$-algebra $A=\bigoplus_{n\in
\Bbb Z}A^n$ with a differential $\partial_A$ of degree $1$. For any cochain DG algebra $A$, the product of its opposite algebra $A^{op}$ is defined by $a_1\diamond a_2 =(-1)^{|a_1|\cdot |a_2|}a_2a_1$.
We denote by $A^e = A\otimes A\!^{op}$ the enveloping DG algebra of $A$.
   Obviously, any DG right $A$-module can be identified as a DG left $A^{op}$-module.
An cochain DG algebra $A$ is called augmented if there is a DG algebra morphism $\varepsilon_A: A\to k$ with
$\varepsilon_A\circ\eta_A=\mathrm{id}_{k}$, where $\eta_A:k\to A$ is the unit map.   A cochain DG algebra $A$ is called  connected if  its underlying graded algebra is a connected graded algebra, i.e., $A^0=k$ and $A^i=0,$ for any $i<0$. One sees that any connected cochain DG algebra $A$ is augmented with
a canonical augmentation map
$\varepsilon: A\to A/A^{\ge 1}=k$. For the rest of this paper, we denote by $A$ a
connected cochain DG algebra over a field $k$ if no special assumption is
emphasized. The underlying graded algebra of $A$ is written by $A^{\#}$. For any DG $A$-module $M$,  we write $M^{\#}$ as its underlying graded $A^{\#}$-module.
A cochain DG coalgebra is graded coalgebra
$C=\bigoplus_{n\in \Bbb Z}C^n$ with a differential $\partial_C$ of degree $1$ such that $\Delta_C:C\to C\otimes C$ is a chain map.
Let $C$ be a cochain DG coalgebra with a counit $\varepsilon_C$. If there is a
coaugmentation map $\eta_C:k\longrightarrow C$ such that
$\partial_C\circ\eta_C=0$ and $\varepsilon_C\circ\eta_C=\mathrm{id}_{k}$, then $C$ is called coaugmented.

Let $A$ and $C$ be an augmented DG algebra and a coaugmented DG coalgebra, respectively. We denote by $B(A)$ and $\Omega(C)$ the bar construction and the cobar contruction of $A$ and $C$, respectively.
Let $M$ and $N$ be a right DG $A$-module and a DG right $C$-comodule, respectively. We write $B(M; A)=M\otimes B(A)$ and $\Omega(N; A)=N\otimes \Omega(C)$ for $M's$ bar construction  and $N's$ cobar construction, respectively. We
refer to \cite{Avr}, \cite{FHT} and \cite{HM} for detailed definitions of bar constructions and cobar constructions.
For $C$, it
has a decomposition $C=k\oplus \overline{C}$, in which $\overline{C}$
is the cokernel of $\eta_C$. There is a coproduct
$\overline{\Delta}:\overline{C}\to \overline{C}\otimes \overline{C}$
induced by $\Delta$ such that $(\overline{C},\overline{\Delta})$ is
a coalgebra without counit. A coaugmented DG coalgebra $C$ is called
{\it cocomplete} if, for any graded element $x\in \overline{C}$,
there is an integer $n$ such that
$\overline{\Delta}^n(x)=(\overline{\Delta}\otimes \mathrm{id}^{\otimes
n-1})\circ\cdots\circ(\overline{\Delta}\otimes
\mathrm{id})\circ\overline{\Delta}(x)=0$.
Let
$\varepsilon_A: A\to k$ and $\eta_C:k\to C$ be the
augmentation map of $A$  and the coaugmentation map of $C$, respectively.
\subsection{Twisted tensor product}
Let $A$ and $C$ be an augmented DG algebra and a coaugmented DG coalgebra, respectively.
A graded linear
map $\tau:C\to A$ of degree $1$ is called a {\it twisting cochain}
(\cite{HMS,Lef,LV,Avr}) if
$$\begin{array}{c}
\varepsilon_A\circ\tau\circ\eta_C=0,\ \text{and}\\
 m_A\circ(\tau\otimes
\tau)\circ\Delta_C+\partial_A\circ\tau+\tau\circ \partial_C=0,\end{array}$$
where $m_A$ is the multiplication operation of $A$.
Let $\tau: C\to A$ be a twisting cochain.  For any DG right $C$-comodule $N$,
the {\it twisted tensor product} (\cite{Lef,LV,Avr}) $N{}_{\tau}\otimes A$
is defined to be a DG right $A$-module:

(i) as a vector space $N{}_{\tau}\otimes A=N\otimes A$;

(ii) the differential is $$\delta(n\otimes a)=\partial_N(n)\otimes a+(-1)^{|n|}n\otimes
\partial_A(a)+\s_{(n)}(-1)^{|n_0|}n_{(0)}\otimes\tau(n_{(1)})a,$$ for graded
elements $n\in N$ and $a\in A$. Here, we use Sweedler's notation: $$\Delta_N(n)=\s_{(n)}n_{(0)}\otimes n_{(1)}.$$
For any DG left $C$-module $N'$, one can similarly define a DG left $A$-module $A\otimes_{\tau}N'$.

Dually, for a DG right $A$-module $M$, the
twisted tensor product $M\otimes_\tau C$ is defined to be a DG right $C$-comodule:

(i) as a vector space $M\otimes_\tau C=M\otimes C$;

(ii) the differential is $$\delta(m\otimes c)=\partial_M(m)\otimes c+(-1)^{|m|}m\otimes
\partial_C(c)-\s_{(c)}(-1)^{|m|}m\tau(c_{(1)})\otimes c_{(2)},$$ for
graded elements $m\in M$ and $c\in C$ with $\Delta_C(c)=\s_{(c)}c_{(1)}\otimes c_{(2)}$. For any DG left $A$-module $M'$, one can define a DG left $C$-comodule $C{}_{\tau}\otimes M'$ in a similar way.

Note the position of the subscript ``$\tau$" in the twisted tensor product.
Here, we emphasis that we take the same symbols on twisted tensor product as in \cite{Avr}.
A twisting map $\tau: C\to A$ is called {\it acyclic} if the map
$$\varepsilon^{ACA}: A\otimes{}_{\tau}C_{\tau}\otimes A\stackrel{\mathrm{id}\otimes \varepsilon_C\otimes \mathrm{id}}{\longrightarrow} A\otimes k\otimes A=A\otimes A\stackrel{\mu_A}{\longrightarrow} A$$
given by $\varepsilon^{ACA}(a\otimes c\otimes a')=\varepsilon_C(c)aa'$ is a quasi-isomorphism.

\subsection{Derived category of DG modules}
The category of DG left $A$-modules is denoted by $\mathscr{C}(A)$ whose
morphisms are DG morphisms. The homotopy category $\mathscr{H}(A)$ is the quotient category of
$\mathscr{C}(A)$, whose objects are the same as those of
$\mathscr{C}(A)$ and whose morphisms are the homotopy equivalence
classes of morphisms in $\mathscr{C}(A)$(cf.\cite{Kel2}). The derived category of DG $A$-modules is denoted by
$\mathscr{D}(A)$, which is constructed from the category
$\mathscr{C}(A)$ by inverting quasi-isomorphisms
(\cite{We},\cite{KM}).
The right derived functor of $\Hom$, is denoted by $R\Hom$, and the
left derived functor of $\otimes$, is denoted by $\,\otimes^L$. For any $M,N\in \mathscr{D}(A)$, $Q\in \mathscr{D}(A^{op})$ and $i\in \Bbb{Z}$, we write $$\mathrm{Tor}_A^i(Q,M)=H^i(Q\otimes^L_A M)\quad \text{and}\quad \mathrm{Ext}_A^i(M,N)=H^i(R\Hom_A(M,N)).$$
The notation $\mathscr{D}^*(A) (* = +,-, b)$ stands for the derived category of
bounded below (resp. bounded above, bounded) DG left $A$-modules.  According to the definition of compact object (cf. \cite{Kr1,Kr2})
in a triangulated category with arbitrary coproduct, a DG $A$-module $M$
is called compact, if
$\Hom_{\mathscr{D}(A)}(M,-)$ preserves all coproducts in
$\mathscr{D}(A)$. A DG $A$-module is compact if and only if it is
in the smallest triangulated thick subcategory of $\mathscr{D}(A)$
containing ${}_AA$ (see \cite[Theorem 5.3]{Kel2}).  The full subcategory
of $\mathscr{D}(A)$ consisting of compact DG $A$-modules is denoted
by $\mathscr{D}^c(A)$. For any augmented DG algebra $A$, we define its {\it Ext-algebra} by $$E=H(R\Hom_A(k,k))=\bigoplus\limits_{i\in \Bbb{Z}}\mathrm{Ext}^i_A(k,k).$$ We say that a connected cochain DG algebra $A$ a homologically smooth DG algebra if its Ext-algebra is finite dimension, or equivalently $A$ is compact as an DG $A^e$-module.  In DG homological algebra,
homologically smooth DG algebras play a similar role as regular
ring do in classical homological ring theory.

\subsection{Notations on (co)-derived categories}
Let $M$ be a DG right comodule over a coaugmented cocomplete DG coalgebra $C$. We have a
composition $\overline{\rho}: M\overset{\rho}\longrightarrow M\otimes
C\overset{\mathrm{id}\otimes \pi}\longrightarrow M\otimes \overline{C}.$
We say that $M$ is cocomplete if, for any homogeneous element $m\in M$,
there is an integer $n$ such that
$$\overline{\rho}^n(m)=(\overline{\rho}\otimes \mathrm{id}^{\otimes
n-1})\circ\cdots\circ(\overline{\rho}\otimes
\mathrm{id})\circ\overline{\rho}(m)=0.$$
If $C$ is finite dimensional, then any DG right $C$-comodule $M$ is cocomplete.
We write $\mathrm{DGcocom}\,C$ as the category of cocomplete DG right $C$-comodules, where the morphisms between objects are
DG morphisms of DG right $C$-comodules. Given a twisting cochain $\tau:C \to A$, we have a pair of adjoint functors $(L,R)$(cf. \cite{Lef,Kel3}):
$$L=-{}_{\tau}\otimes A:\text{DGcocom}C\rightleftarrows\mathscr{C}(A^{op}):
R=-\otimes{}_\tau C.$$
Let $M,N$ be cocomplete DG right $C$-comodules.
A DG comodule morphism $f$ from $M$ to $N$ is called a weak equivalences related to $\tau$,  if $Lf:LM\to LN$
is a quasi-isomorphism.  Let $\mathscr{H}(\mathrm{DGcocom}\,C)$ (cf.\cite{HW}) be the homotopy
category of $\mathrm{DGcocom}\,C$, and $\mathcal{W}$ be the class of weak
equivalences in the category  $\mathscr{H}(\mathrm{DGcocom}\,C)$. Then $\mathcal{W}$ is
a multiplicative system. The coderived category
$\mathscr{D}(\mathrm{DGcocom}\,C)$ of $C$ is defined to be
$\mathscr{H}(\mathrm{DGcocom}\,C)[\mathcal{W}^{-1}]$, i.e., the localization of
$\mathscr{H}(\mathrm{DGcocom}\,C)$ at the weak equivalence class $\mathcal{W}$
(cf. \cite{Kel3,Lef}).
With the natural exact triangles,
$\mathscr{D}(\mathrm{DGcocom}\,C)$ is a triangulated category.
The notation $\mathscr{D}^*(\mathrm{DGcocom}\,C) (* = +,-, b)$ stands for the coderived category of
bounded below (resp. bounded above, bounded) cocomplete DG right $C$-comodules.
\subsection{Notations on triangulated categories}
Let $\mathcal{C}$ be a given subcategory or simply a set of some
objects in a triangulated category $\mathcal{T}$. We denote by
$\mathrm{smd}(\mathcal{C})$ the minimal strictly full subcategory
which contains $\mathcal{C}$ and is closed under taking
direct summands. And we write
 $\mathrm{add}(\mathcal{C})$ as the intersection of all
strict and full subcategories of $\mathcal{T}$ that contain
$\mathcal{C}$ and are closed under finite direct
sums and all suspensions. Let $\mathcal{A}$ and $\mathcal{B}$ be
two strict and full subcategories of $\mathcal{T}$. Define
$\mathcal{A}\star \mathcal{B}$ as a full subcategory of
$\mathcal{T}$,  whose objects are described as follows: $M\in
\mathcal{A}\star \mathcal{B}$ if and only if there is an exact
triangle
$$ L\to M\to N\to \Sigma L, $$
where $L\in \mathcal{A}$ and $N\in \mathcal{B}$. For any strict and
full subcategory $\mathcal{C}$ of $\mathcal{T}$, one has
$\mathcal{A}\star (\mathcal{B}\star \mathcal{C}) = (\mathcal{A}\star
\mathcal{B})\star \mathcal{C}$ (see \cite{BBD} or \cite[1.3.10]{BV}). Thus, the
following notation is unambiguous:
\begin{equation*}
\mathcal{A}^{\star n} =
\begin{cases} 0\quad&\text{for}\, n =0;\\
\mathcal{A}\quad & \text{for}\, n =1; \\
\overbrace{\mathcal{A}\star\cdots\star
\mathcal{A}}^{n\,\text{copies}} \quad&\text{for}\, n\ge 1.
\end{cases}
\end{equation*}
We refer to the objects of $\mathcal{A}^{\star n}$ as $(n-1)$-fold
extensions of objects from $\mathcal{A}$. Define
$\mathcal{A}\diamond\mathcal{B} =
\mathrm{smd}(\mathcal{A}\star\mathcal{B})$. Let $\mathcal{E}$ be a
full subcategory of $\mathcal{T}$. Inductively, we define
$\langle\mathcal{E}\rangle_1 =
\mathrm{smd}(\mathrm{add}(\mathcal{E}))$ and
$\langle\mathcal{E}\rangle_n =
\langle\mathcal{E}\rangle_{n-1}\diamond \langle\mathcal{E}\rangle_1,
n\ge 2$.  We have the
associativity of $\diamond$ and the formula
$\mathcal{A}_1\diamond\mathcal{A}_2\diamond\cdots\diamond
\mathcal{A}_n
=\mathrm{smd}(\mathcal{A}_1\star\cdots\star\mathcal{A}_n)$ (see
\cite[Section 2]{BV}). Clearly, $\langle \mathcal{E} \rangle= \bigcup_n \langle \mathcal{E}\rangle_n $ is the smallest full triangulated subcategory of $\mathcal{T}$ containing $\mathcal{E}$ and closed
under forming direct summands.

\section{tilting dg modules and derived picard groups}
 In this section, we consider the two sided tilting DG modules over $A$ and the derived Picard group of $A$. The following definition is analogous to the
notion of two sided tilting complex, which is introduced independently by Rouquier-Zimermann \cite{RZ,Zim} and Yekutieli \cite{Yek1} based on Rickard's derived Morita theory \cite{Ric1}.

\begin{defn}{\rm
 A DG $A^e$-module $X$ is called tilting if there is a DG $A^e$-module $Y$ such that $X\otimes_A^L Y\cong A$ and $Y\otimes_A^LX\cong A$ in $\mathscr{D}(A^e)$. The DG $A^e$-module $Y$ is called a quasi-inverse of $X$.
}
\end{defn}

\begin{rem}
It is easy to see that the quasi-inverse of a tilting DG module is also tilting. And the quasi-inverse of a given tilting DG $A^e$-module is unique up to isomorphism in $\mathscr{D}(A^e)$.
If $X_1$ and $X_2$ are two tilting DG modules, then so is $X_1\otimes_A^L X_2$ by the associativity of $-\otimes_A^L-$.
\end{rem}
For the characterizations of tilting DG $A^e$-modules, we have
the following proposition, which is analogous to \cite[Theorem 1.6]{Yek1} and \cite[Theorem 5.6]{Yek3}.

\begin{prop}\label{chartilt}
 Let $X$ be a DG $A^e$-module. The following are equivalent
\begin{enumerate}
\item $X$ is a tilting DG $A^e$-module.
\item The functors $X\otimes_A^L-$ and $-\otimes_A^L X$ are auto-equivalences of $\mathscr{D}(A^{e})$.
\item The functors $X\otimes_A^L-$ and $-\otimes_A^L X$ are auto-equivalences of $\mathscr{D}(A)$ and $\mathscr{D}(A^{op})$ respectively.
\item The functors $X\otimes_A^L-$ and $-\otimes_A^L X$ are auto-equivalences of $\mathscr{D}^c(A)$ and $\mathscr{D}^c(A^{op})$ respectively.
\item $\langle {}_AX\rangle =\mathscr{D}^c(A)$,  $\langle X_A\rangle = \mathscr{D}^c(A^{op})$,
 and the adjunction morphisms $$A\to R\Hom_A(X,X) \quad \text{and}  \quad A\to R\Hom_{A^{op}}(X,X)$$ in $\mathscr{D}^c(A^{e})$ are isomorphisms.
\end{enumerate}
\end{prop}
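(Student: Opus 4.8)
The plan is to prove the cycle of implications $(1)\Rightarrow(2)\Rightarrow(3)\Rightarrow(4)\Rightarrow(5)\Rightarrow(1)$, using throughout the adjunction $(X\otimes_A^L-,\,R\Hom_A(X,-))$ (and its right-hand analogue), the fact recalled before the statement that $\mathscr{D}^c(A)=\langle {}_AA\rangle$ is the thick closure of the compact generator ${}_AA$, and the principle that quasi-isomorphisms are exactly the isomorphisms of each derived category. For $(1)\Rightarrow(2)$, if $Y$ is a quasi-inverse of $X$ then associativity of $-\otimes_A^L-$ together with $A\otimes_A^L-\cong\id$ shows that $Y\otimes_A^L-$ is quasi-inverse to $X\otimes_A^L-$ as endofunctors of $\mathscr{D}(A^e)$, and symmetrically for $-\otimes_A^L X$; hence both are auto-equivalences.

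For $(2)\Rightarrow(3)$ I would descend the bimodule equivalence to the one-sided category via the forgetful functor $U\colon\mathscr{D}(A^e)\to\mathscr{D}(A)$, which preserves coproducts and triangles, commutes with $X\otimes_A^L-$, and sends the diagonal bimodule to ${}_AA$ and $A^e$ to a coproduct of copies of ${}_AA$. By Neeman's criterion it is enough to check that the coproduct-preserving functor $X\otimes_A^L-$ on $\mathscr{D}(A)$ is fully faithful on the generator and sends it to a generator. Full faithfulness at ${}_AA$ is the assertion that the unit $A\to R\Hom_A(X,X\otimes_A^LA)=R\Hom_A(X,X)$ is a quasi-isomorphism, and this is just the restriction along $U$ of the unit of the $\mathscr{D}(A^e)$-adjunction at the unit object $A$, which is an isomorphism by $(2)$. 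For generation, since $X\otimes_A^L-$ is an equivalence of $\mathscr{D}(A^e)$ carrying the generator $A^e$ to $X\otimes A^{op}$, the diagonal $A$ lies in the localizing subcategory generated by $X\otimes A^{op}$; applying $U$ puts ${}_AA$ in $\langle {}_AX\rangle_{\mathrm{loc}}$, so ${}_AX$ generates $\mathscr{D}(A)$. The argument for $-\otimes_A^L X$ on $\mathscr{D}(A^{op})$ is symmetric. Then $(3)\Rightarrow(4)$ is immediate because compactness is categorical, so the auto-equivalences restrict to $\mathscr{D}^c(A)$ and $\mathscr{D}^c(A^{op})$.

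For $(4)\Rightarrow(5)$, since $X\otimes_A^L-$ is an equivalence of $\mathscr{D}^c(A)$ sending ${}_AA\mapsto {}_AX$ and $\langle\cdot\rangle$ is built from operations preserved by equivalences, $\langle {}_AX\rangle=\langle {}_AA\rangle=\mathscr{D}^c(A)$, and symmetrically $\langle X_A\rangle=\mathscr{D}^c(A^{op})$; full faithfulness yields $H^\ast(A)=\Hom_{\mathscr{D}(A)}({}_AA,\Sigma^\ast{}_AA)\xrightarrow{\sim}\Hom_{\mathscr{D}(A)}({}_AX,\Sigma^\ast{}_AX)=H^\ast R\Hom_A(X,X)$ induced by the bimodule map $A\to R\Hom_A(X,X)$, which is therefore a quasi-isomorphism of $A^e$-modules and hence an isomorphism in $\mathscr{D}(A^e)$ (one checks both sides lie in $\mathscr{D}^c(A^e)$), and likewise on the right. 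Finally, for $(5)\Rightarrow(1)$ I would set $Y=R\Hom_A(X,A)$ and consider the counit $\varepsilon_M\colon X\otimes_A^L R\Hom_A(X,M)\to M$. The full subcategory of $M\in\mathscr{D}(A)$ with $\varepsilon_M$ an isomorphism is thick; by the triangle identity $\varepsilon_X\circ(X\otimes_A^L\eta_A)=\id_X$, where $\eta_A\colon A\to R\Hom_A(X,X)$ is the isomorphism of $(5)$, it contains ${}_AX$, hence all of $\langle {}_AX\rangle=\mathscr{D}^c(A)\ni{}_AA$. Thus $\varepsilon_A\colon X\otimes_A^LY\to A$ is an isomorphism of left modules; being a morphism of $A^e$-modules it is an isomorphism in $\mathscr{D}(A^e)$, and the symmetric argument with the right-hand data gives $Y\otimes_A^LX\cong A$, so $Y$ is a quasi-inverse of $X$.

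I expect the main obstacle to be $(2)\Rightarrow(3)$ and its right analogue, namely descending an equivalence of the \emph{bimodule} derived category to the one-sided categories: one must correctly match the unit and counit of the one-sided adjunction with the restriction along $U$ of the two-sided ones and run the localizing-subcategory generation argument. A secondary difficulty is the bookkeeping in $(4)\Rightarrow(5)$ and $(5)\Rightarrow(1)$ required to promote isomorphisms established over ${}_AA$ (or $A_A$) to isomorphisms in $\mathscr{D}(A^e)$ and to confirm that the endomorphism objects $R\Hom_A(X,X)$ and $R\Hom_{A^{op}}(X,X)$ genuinely lie in $\mathscr{D}^c(A^e)$.
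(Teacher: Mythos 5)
Most of your cycle is sound, and in two places it is actually tidier than the paper: your $(3)\Rightarrow(4)$ via ``equivalences preserve compactness'' is correct (the paper instead routes $(3)\Rightarrow(5)\Rightarrow(4)$, proving $(1)\Leftrightarrow(2)$, $(1)\Rightarrow(3)\Rightarrow(5)\Rightarrow(1)$ and $(4)\Leftrightarrow(5)$ rather than a single cycle), and your $(4)\Rightarrow(5)$ supplies the generation claim $\langle {}_AX\rangle=\mathscr{D}^c(A)$ by transport of thick generation, a point the paper's own $(4)\Rightarrow(5)$ step leaves implicit. The genuine gap is in $(2)\Rightarrow(3)$, precisely the step you flag as the main obstacle. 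The gluing lemma you invoke as ``Neeman's criterion'' requires, besides coproduct preservation, full faithfulness on a compact generator $G$ and generation of the target by $F(G)$, the further hypothesis that $F(G)$ is itself \emph{compact}. That hypothesis is what makes the devissage run: the full subcategory of those $M\in\mathscr{D}(A)$ for which $\Hom_{\mathscr{D}(A)}(A,\Sigma^iM)\to\Hom_{\mathscr{D}(A)}(X,\Sigma^i(X\otimes_A^LM))$ is bijective for all $i$ is closed under shifts, triangles and summands, but closure under arbitrary coproducts requires commuting $\Hom_{\mathscr{D}(A)}({}_AX,-)$ with direct sums, i.e.\ compactness of ${}_AX$. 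You never verify this, so your argument only yields full faithfulness on $\mathscr{D}^c(A)=\langle {}_AA\rangle$, not on all of $\mathscr{D}(A)$. Nor is this compactness free at this stage: hypothesis (2) is a statement about $\mathscr{D}(A^e)$, whose compact generator is $A^e$, not the diagonal bimodule $A$ (the diagonal is compact in $\mathscr{D}(A^e)$ precisely when $A$ is homologically smooth, which is not assumed here). Indeed the paper has to work for this fact: it establishes compactness of ${}_AX$ only inside its proof of $(3)\Rightarrow(5)$, using the coproduct-preserving inverse of the one-sided equivalence — that is, using statement (3), which is exactly what you are trying to prove.

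The repair is short but abandons the descent: from (2), both functors are essentially surjective on $\mathscr{D}(A^e)$, so there are $Y',Y''$ with $X\otimes_A^LY'\cong A\cong Y''\otimes_A^LX$; associativity gives $Y'\cong Y''\otimes_A^LX\otimes_A^LY'\cong Y''$, hence $X$ is tilting, i.e.\ $(2)\Rightarrow(1)$, and then $(1)\Rightarrow(3)$ is immediate because $Y'\otimes_A^L-$ and $-\otimes_A^LY'$ are explicit quasi-inverses; your cycle survives in the form $(2)\Rightarrow(1)\Rightarrow(3)$. This is the paper's route. (If you insist on a direct descent, you must first show that $R\Hom_A(X,-):\mathscr{D}(A)\to\mathscr{D}(A)$ preserves coproducts — for instance using that any $M$ is a retract in $\mathscr{C}(A)$ of the restriction of the bimodule $M\otimes A$, so that coproduct preservation descends from the $\mathscr{D}(A^e)$-adjoint, which is an equivalence by (2); this yields compactness of ${}_AX$, but no such argument appears in your proposal.) Two smaller points. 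In $(5)\Rightarrow(1)$ the symmetric counit argument produces $R\Hom_{A^{op}}(X,A)\otimes_A^LX\cong A$, whose left factor is a priori different from your $Y=R\Hom_A(X,A)$; you still need the associativity step $R\Hom_{A^{op}}(X,A)\cong R\Hom_{A^{op}}(X,A)\otimes_A^LX\otimes_A^LY\cong Y$ (this is exactly how the paper concludes) before calling $Y$ a two-sided quasi-inverse. And in $(4)\Rightarrow(5)$ the parenthetical ``one checks both sides lie in $\mathscr{D}^c(A^e)$'' is neither checkable nor needed: $A\in\mathscr{D}^c(A^e)$ is homological smoothness, and a quasi-isomorphism of DG $A^e$-modules is already an isomorphism in $\mathscr{D}(A^e)$.
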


\begin{proof}
$(1)\Rightarrow (2)$
By the definition of tilting DG $A^e$-module, there is a DG $A^e$-module $Y$ such that $X\otimes_A^L Y\cong A$ and $Y\otimes_A^LX\cong A$ in $\mathscr{D}(A^{e})$.
Let $G$ and $H$ be two endofunctors of $\mathscr{D}(A^{e})$ defined by $G(M)= X\otimes_A^LM$ and $H(M)=Y\otimes_A^LM$ respectively. It is easy to see that $F\circ G= \mathrm{id}_{\mathscr{D}(A^{e})}$ and $G\circ F=\mathrm{id}_{\mathscr{D}(A^{e})}$. Hence $G=X\otimes_A^L-$ is an equivalence of $\mathscr{D}(A^{e})$. Similarly, we can get that $-\otimes_A^L X$ is also an equivalence of $\mathscr{D}(A^{e})$.

$(2)\Rightarrow (1)$ Since the functors $X\otimes_A^L-$ and $-\otimes_A^L X$ are equivalences of $\mathscr{D}(A^{e})$, they are both essentially surjective. So there are some $Y'$ and $Y''$ in $\mathscr{D}(A^{e})$ such that $X\otimes_A^L Y'\cong A$ and $Y''\otimes_A^L X\cong A$ in $\mathscr{D}(A^{e})$. By the associativity of $-\otimes_A^L-$, we have
$$Y'\cong Y''\otimes_A^L X\otimes_A^L Y'\cong Y''$$ in $\mathscr{D}(A^{e})$. Hence $X$ is an invertible DG $A^e$-module.

$(1)\Rightarrow (3)$  Let $Y$ be the quasi-inverse of $X$. Since $X\otimes_A^L Y\cong A$ and $Y\otimes_A^LX\cong A$ in $\mathscr{D}(A^{e})$, it is easy to see that $$(X\otimes_A^L-)\circ (Y\otimes_A^L-)=\mathrm{id}_{\mathscr{D}(A)}=(Y\otimes_A^L-)\circ (X\otimes_A^L-)$$ and $(-\otimes_A^L X)\circ (-\otimes_A^L Y)=\mathrm{id}_{\mathscr{D}(A)}=(-\otimes_A^L Y)\circ (-\otimes_A^L X)$. Hence the functors $X\otimes_A^L-$ and $-\otimes_A^L X$ are auto-equivalences of $\mathscr{D}(A)$ and $\mathscr{D}(A^{op})$ respectively.

$(3)\Rightarrow (5)$  Let $G$ and $H$ be the inverse functors of $X\otimes_A^L-$ and $-\otimes_A^L X$ respectively. We claim $G$ and $H$ commute with infinite direct sums.
Let $\{M_i|i\in I\}$ be a family of objects in $\mathscr{D}(A)$. For any $i\in I$,  there exists $N_i\in \mathscr{D}(A)$ such that $X\otimes_A^L N_i=M_i$. We have $G(M_i)= G\circ (X\otimes_A^L-)(N_i)=N_i$.  Hence $$G(\bigoplus\limits_{i\in I} M_i)=G(\bigoplus\limits_{i\in I}X\otimes_A^L N_i)=G\circ (X\otimes_A^L-) (\bigoplus\limits_{i\in I}N_i)= \bigoplus\limits_{i\in I}N_i=\bigoplus\limits_{i\in I}G(M_i).$$ Similarly, we can prove $H$ commutes with infinite direct sums. In order to prove that ${}_AX\in \mathscr{D}^c(A)$, lets consider the following commutative diagram
\begin{align*}
\xymatrix{\oplus_{i\in I}\Hom_{\mathscr{D}(A)}(X,M_i)
\ar[r]\ar[d]_{G} & \Hom_{\mathscr{D}(A)}(X,\oplus_{i\in I} M_i)
\ar[d]^{G}
\\
\oplus_{i\in I}\Hom_{\mathscr{D}(A)}(G(X),G(M_i)) \ar[r]
&\Hom_{\mathscr{D}(A)}(G(X),\oplus_{i\in I}G(M_i)).
\\}
\end{align*}
The vertical arrows are bijective since $G$ is an equivalence and the lower horizontal map is an isomorphism since ${}_AG(X)\cong {}_A G\circ (X\otimes_A^L-)(A)\cong {}_AA$  is compact in $\mathscr{D}(A)$. Hence the upper horizontal map is also bijective. This implies that ${}_AX$ is compact. We can similarly prove that $X_A$ is compact. Let $F_X$ be a semi-free resolution of the DG $A^e$-module $X$.
Recall that a semi-free resolution of a DG $A$-module $M$ is a semi-free DG $A$-module $F_M$ such that there exists a quasi-isomorphism from $F_M$ to $M$ (cf.\cite{FHT}).
 By \cite[Proposition 1.3]{FIJ}, $F_X$ is both a K-projective DG $A$-module and a K-projective DG $A^{op}$-module. Here, a DG $A$-module $P$ is called K-projective if the functor $\Hom_A(P,-)$ preserves quasi-isomorphisms (cf. \cite[1.2]{FIJ}).  It is easy to see that
the canonical morphism \begin{align*}
\theta: A & \to \Hom_{A}(F_X, F_X) \\
a& \mapsto [r_a: x\to xa]
\end{align*}
is a morphism of DG $A^e$-modules and it
is a quasi-isomorphism since the equivalence $X\otimes_A^L-: \mathscr{D}(\mathrm{DGmod}\,\,A)\to \mathscr{D}(A)$ induces the
  bijective map
  $$
   H^i(\theta):
\Hom_{\mathscr{D}(A)}(\Sigma^i A,A)\longrightarrow \Hom_{\mathscr{D}(A)}(\Sigma^iX,X)\cong  H^i(\Hom_A(F_X,F_X)),$$
 $\forall i\in \Bbb{Z}$. Hence $A\cong R\Hom_{A}(X,X)$ in $\mathscr{D}(A^{e})$. Similarly,  $ R\Hom_{A^{op}}(X,X)\cong A$ in $\mathscr{D}(A^{e})$.
Since ${}_AX\in \mathscr{D}^c(A)$ and $X_A\in \mathscr{D}^c(A^{op})$,  any objects in $\langle {}_AX \rangle $ and $\langle X_A\rangle$ are compact.
To show that $$\langle {}_AX\rangle = \mathscr{D}^c(A)\quad \text{and}\quad \langle X_A\rangle =\mathscr{D}^c(A^{op}),$$ it suffices to prove that ${}_AA \in \langle{}_AX\rangle$ and $A_A\in \langle X_A\rangle$. Since $A\cong R\Hom_{A}(X,X)$ and $ R\Hom_{A^{op}}(X,X)\cong A$ in $\mathscr{D}(A^{e})$, we have $${}_AA\cong {}_AR\Hom_{A^{op}}(X,X\otimes_AA)\cong X\otimes_A^LR\Hom_{A^{op}}(X,A)$$ and $$A_A\cong R\Hom_A(X,A\otimes_A X)_A\cong R\Hom_A(X,A)\otimes_A^L X. $$  By \cite[Proposition 3.6]{MW2}, one sees that
$$R\Hom_{A^{op}}(X,A)\in \mathscr{D}^c(A)\,\,\text{and}\,\, R\Hom_A(X,A)\in \mathscr{D}^c(A^{op}).$$
 So ${}_AA\cong X\otimes_A^LR\Hom_{A^{op}}(X,A)\in \langle {}_AX\rangle$  and $ A_A\cong R\Hom_A(X,A)\otimes_A^L X \in \langle X_A\rangle.$

$(5) \Rightarrow (1)$ In $\mathscr{D}(A^{e})$, we have
$ R\Hom_A(X,X)\cong R\Hom_A(X,A)\otimes_A^L X$ and $$R\Hom_{A^{op}}(X,X) \cong X\otimes_A^LR\Hom_{A^{op}}(X,A),$$ since ${}_AX$ and $X_A$ are both compact. Therefore, $$R\Hom_A(X,A)\otimes_A^L X\cong A\,\, \text{and}\,\, X\otimes_A^LR\Hom_{A^{op}}(X,A)\cong A.$$
By the associativity of $-\otimes_A^L-$,  we get
$$R\Hom_A(X,A)\cong R\Hom_A(X,A)\otimes_A^L X \otimes_A^L R\Hom_{A^{op}}(X,A)\cong R\Hom_{A^{op}}(X,A)$$
in $\mathscr{D}(A^{e})$. So the DG $A^e$-module $X$ is tilting.

$(4)\Rightarrow (5)$  We have ${}_AX=X\otimes_A^L A\in \mathscr{D}^c(\mathrm{DGmod}\,\,A)$ since $X\otimes_A^L-$ is an auto-equivalence of $\mathscr{D}^c(A)$. Similarly,  $X_A\in \mathscr{D}^c(A^{op})$.  Let $F$ be a semi-free resolution of the DG $A^e$-module $X$.  By \cite[Proposition 1.3]{FIJ}, $F$ is both a K-projective DG $A$-module and a K-projective DG $A^{op}$-module. It is easy to see that
the canonical morphism \begin{align*}
\beta: A & \to \Hom_{A}(F, F) \\
a& \mapsto [r_a: x\to xa]
\end{align*}
is a morphism of DG $A^e$-modules and it
is a quasi-isomorphism since the equivalence $X\otimes_A^L-: \mathscr{D}^c(A)\to \mathscr{D}^c(A)$ induces the
  bijective map
$$H^i(\beta): \Hom_{\mathscr{D}^c(A)}(\Sigma^i A,A)\to \Hom_{\mathscr{D}^c(A)}(\Sigma^iX,X)\cong H^i(\Hom_A(F,F))$$
for any $i\in \Bbb{Z}$. Hence $A\cong R\Hom_{A}(X,X)$ in $\mathscr{D}(A^{e})$. Similarly, we can get that  $ R\Hom_{A^{op}}(X,X)\cong A$ in $\mathscr{D}(A^{e})$.

$(5)\Rightarrow (4)$ Since ${}_AX\in \mathscr{D}^c(A)$ and $X_A\in \mathscr{D}^c(A^{op})$, it is easy to see that $X\otimes_A^L-$ and $-\otimes_A^L X$ are endofunctors of $\mathscr{D}^c(A)$ and $\mathscr{D}^c(A^{op})$ respectively. For any $M\in \mathscr{D}^c(A)$, we have
 $${}_AR\Hom_A(X,M)\in \langle {}_AR\Hom_A(X,X) \rangle =\langle {}_AA\rangle $$ since $\langle{}_AX\rangle=\mathscr{D}^c(A)$ and $R\Hom_A(X,X)\cong A$ in $\mathscr{D}(A^{e})$.
Hence the functor $R\Hom_A(X,-)$ is an endofunctor of $\mathscr{D}^c(A)$. Similarly, we can show that $R\Hom_{A^{op}}(X,-)$ is an endofunctor of $\mathscr{D}^c(A^{op})$.
Since the adjunction morphism $A\to R\Hom_A(X,X)$ in $\mathrm{D}(A^e)$ is an isomorphism, we have $$R\Hom_A(X,X\otimes_A^L M)\cong R\Hom_A(X,X)\otimes_A^L M\cong M$$ and $$X\otimes_A^LR\Hom_A(X,M) \cong R\Hom_A(X,X\otimes_A^LM)\cong R\Hom_A(X,X)\otimes_A^LM\cong M$$ in $\mathrm{D}(A)$, for any object $M\in \mathscr{D}^c(A)$. Hence the functor $$X\otimes_A^L-: \mathscr{D}^c(A)\to \mathscr{D}^c(A)$$ is an equivalence with inverse $\RHom_{A}(X,-)$. Similarly, $$-\otimes_A^L X: \mathscr{D}^c(A^{op})\to \mathscr{D}^c(A^{op})$$ is an equivalence with inverse $\RHom_{A^{op}}(X,-)$.
\end{proof}
\begin{defn}{\rm For a connected cochain DG algebra $A$,
we define its derived Picard group as the abelian group $\mathrm{DPic}(A)$, whose elements are the isomorphism classes of tilting DG $A^e$-modules in $\mathscr{D}(A^{e})$. In $\mathrm{DPic}(A)$,  the product of the classes of $X$ and $Y$ is given by the class of $X\otimes_A^L Y$, and the unit element is the class of $A$. }
\end{defn}

\section{some useful facts}
In this section, we list some important statements on a connected cochain DG algebra $A$ and DG $A$-modules.
 Let $F$ be a semi-free DG $A$-module. It is called minimal if $\partial_F(F)\subseteq A^{\ge 1}F$. For any DG $A$-module $M$, a minimal semi-free resolution of $M$ is a minimal semi-free DG $A$-module together with a quasi-isomorphism $\eta: F\to M$.  Sometimes, we just say $F$ is a minimal semi-free resolution of $M$ for briefness. As to its existence, we have the following lemma.
\begin{lem}\cite[Proposition 2.4]{MW1}
Any  DG $A$-module $M$ in $\mathscr{D}^+(A)$ admits a minimal semi-free resolution.
\end{lem}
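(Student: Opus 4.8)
The plan is to construct a quasi-isomorphism $\varepsilon\colon F\to M$ with $F$ minimal semi-free by attaching free generators one cohomological degree at a time so as to kill the cohomology of the mapping cone of $\varepsilon$, always choosing the differential of a new generator to be a cocycle lying in $A^{\ge 1}F$; the latter requirement is exactly the minimality condition $\partial_F(F)\subseteq A^{\ge 1}F$. Since $M\in\mathscr{D}^+(A)$ I may assume $H^i(M)=0$ for $i<s$, where $s=\inf\{i:H^i(M)\neq 0\}$, and the bounded-below hypothesis is precisely what lets the induction begin at the finite degree $s$. For the base case I take cocycles $z_\lambda\in\ker\partial_M$ of degree $s$ whose classes form a $k$-basis of $H^s(M)$, let $F(s)$ be free over $A^{\#}$ on a semi-basis $\{e_\lambda\}$ in degree $s$ with $\partial e_\lambda=0$, and set $\varepsilon(e_\lambda)=z_\lambda$; then $F(s)$ is trivially minimal, and $\varepsilon$ induces an isomorphism on $H^{i}$ for $i<s$ and a surjection on $H^s$.

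The inductive step produces a minimal semi-free $F(n)$ with $\varepsilon_n\colon F(n)\to M$ such that $H^i(\varepsilon_n)$ is bijective for $i\le n$ and surjective for $i=n+1$, refining the previous stage $F(n-1)$. Working with the mapping cone $\mathrm{Cone}(\varepsilon_{n-1})$, whose cohomology vanishes in degrees $\le n-1$ by hypothesis, I read off from a basis of $H^{n}(\mathrm{Cone}(\varepsilon_{n-1}))$ the data needed to attach new free generators: generators with zero differential that enlarge the image in $H(M)$, and generators whose differential is a cocycle $w$ of $F(n-1)$ whose $\varepsilon_{n-1}$-image is a prescribed coboundary $\partial_M m$ of $M$, which annihilates the corresponding kernel class once one extends $\varepsilon$ by sending the new generator to $m$. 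Setting $F(n)$ equal to $F(n-1)$ with these generators adjoined advances the induction; the differentials of the new generators lie in $F(n-1)$, so the tautological filtration exhibits $F=\bigcup_n F(n)$ as semi-free.

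The decisive point, which I expect to be the main obstacle, is to carry out every attachment minimally, that is, to show that the cocycles $w$ prescribing the new differentials can always be chosen inside $A^{\ge 1}F(n-1)$. Connectedness of $A$ is what makes this possible: since $A^0=k$, the ideal $A^{\ge 1}$ is a DG ideal and the induced differential on $k\otimes_A F(n-1)=F(n-1)/A^{\ge 1}F(n-1)$ vanishes by the minimality of $F(n-1)$. Consequently the ``constant part'' of a cocycle, namely its image in $F(n-1)/A^{\ge 1}F(n-1)$, is unchanged under adding coboundaries, since every coboundary already lies in $A^{\ge 1}F(n-1)$; so a cohomology class of $F(n-1)$ with nonzero constant part is, by the inductive choice of the zero-differential generators, detected isomorphically in $H(M)$ and therefore cannot belong to the kernel being killed. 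The genuine kernel classes thus admit representatives with vanishing constant part, i.e.\ in $A^{\ge 1}F(n-1)$, and each $F(n)$ is minimal, hence so is $F$. Finally, since the whole construction proceeds upward from degree $s$ and each cohomological degree is settled at a finite stage, $\varepsilon=\varinjlim\varepsilon_n$ is well defined and induces an isomorphism on $H^i$ for every $i$, so it is the required minimal semi-free resolution.
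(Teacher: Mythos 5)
The paper itself offers no proof of this lemma (it is quoted from \cite{MW1}), so I am judging your proposal against the standard construction; judged that way, it has a genuine error in the induction bookkeeping, not just a gap. Attaching a generator $e$ of degree $n$ with $\partial e=f$ and $\varepsilon(e)=\mp m$ kills a class of $H^{n}(\mathrm{Cone}(\varepsilon_{n-1}))$ represented by a pair $(\Sigma f,m)$ with $f\in F(n-1)^{n+1}$, and by the long exact sequence of the cone such attachments can only repair \emph{surjectivity} of $H^{n}(\varepsilon)$ and \emph{injectivity} of $H^{n+1}(\varepsilon)$. Injectivity of $H^{n}(\varepsilon)$ is governed by the lower group $H^{n-1}(\mathrm{Cone}(\varepsilon_{n-1}))\cong \ker H^{n}(\varepsilon_{n-1})$, whose killing requires generators of degree $n-1$; your inductive hypothesis (bijective for $i\le n-1$, surjective at $i=n$) does not make that group vanish, your stage $n$ never looks at it, and no later stage can reach it --- indeed, adding generators of degree $\ge n$ leaves $F^{n-1}$ unchanged, so $\ker H^{n}(\varepsilon)$ can never shrink afterwards. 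Concretely, take $A=k\oplus kx=k[x]/(x^{2})$ with $|x|=1$, $\partial_A=0$, and $M=k$. Your base stage gives $F(0)=Ae_{0}$, $\varepsilon(e_0)=1$; then $H^{n}(\mathrm{Cone}(\varepsilon_0))=0$ for all $n\ge 1$ (since $H^{\ge 1}(M)=0$ and $F(0)^{\ge 2}=0$), while $H^{0}(\mathrm{Cone}(\varepsilon_0))=\ker H^{1}(\varepsilon_0)=k\cdot[xe_{0}]\neq 0$. Your procedure therefore attaches nothing at any stage and terminates with $F=Ae_{0}$, which is not quasi-isomorphic to $k$. The same example refutes your closing claim that ``each cohomological degree is settled at a finite stage'': the true minimal semi-free resolution here is $F=\bigoplus_{j\ge 0}Ae_{j}$ with \emph{all} $e_{j}$ in degree $0$ and $\partial e_{j+1}=xe_{j}$, because each kernel-killing generator creates a new kernel class in the same degree (coefficient $x\in A^{1}$ on the new generator). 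So within a single cohomological degree the attachment must be iterated countably often; a single upward sweep with one batch per degree cannot work.

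The fix is to kill the lowest nonvanishing cohomology of the cone at each step, iterating inside a degree before moving up (equivalently, to lift the minimal graded free resolution of $H(M)$ over $H(A)$ step by step --- the Eilenberg--Moore procedure that this paper itself invokes in the proof of Proposition \ref{trivialpoly}). Once the construction is organized this way, the point you flag as decisive becomes automatic and needs no analysis of ``constant parts'': at the moment a generator of degree $n$ is attached, every generator already present has degree $\le n$, so every element of $F^{n+1}$ --- in particular the prescribed cocycle $w=\partial e$ --- lies in $A^{\ge 1}F$ for pure degree reasons. By contrast, the principle your sketch relies on, that a cohomology class with nonzero constant part is ``detected isomorphically in $H(M)$,'' is nowhere established: it is not a formal consequence of your induction, and it is exactly the kind of statement whose proof requires the minimality one is trying to arrange (it fails, for instance, the moment an earlier stage attaches a $k$-basis of cohomology non-minimally). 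So you correctly identified connectedness of $A$ and boundedness below of $H(M)$ as the operative hypotheses, but both the degree bookkeeping of the induction and the mechanism offered for minimality are wrong as stated.
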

Especially, we have the following characterization of a compact DG $A$-module in terms of minimal semi-free resolution.
\begin{lem}\cite[Proposition 3.3]{MW1}
For any DG $A$-module $M$, it is compact if and only if it admits a minimal semi-free resolution $F_M$, which has a finite semi-basis.
\end{lem}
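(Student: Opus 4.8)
The plan is to prove the two implications separately, using in both the standard dictionary between the semi-basis of a \emph{minimal} semi-free resolution and the graded vector space $\mathrm{Tor}^{A}(k,M)=H(k\otimes_A^L M)$. Concretely, if $\eta\colon F_M\to M$ is a minimal semi-free resolution, then $F_M$ is semi-free, hence K-flat, so $k\otimes_A F_M$ computes $k\otimes_A^L M$; moreover the minimality condition $\partial_{F_M}(F_M)\subseteq A^{\ge 1}F_M$ forces the differential induced on $k\otimes_A F_M$ to vanish, since $k\otimes_A A^{\ge 1}F_M=0$. Hence $\mathrm{Tor}^{A}(k,M)\cong k\otimes_A F_M$, and the total dimension of the latter is exactly the cardinality of the semi-basis of $F_M$ (the underlying graded module being free on that basis). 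Thus, once $M$ is known to admit a minimal semi-free resolution, the assertion ``$F_M$ has a finite semi-basis'' is equivalent to ``$\dim_k\mathrm{Tor}^{A}(k,M)<\infty$'', and the lemma reduces to matching this finiteness with compactness.

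For the implication ``finite semi-basis $\Rightarrow$ compact'', I would use the order filtration of a semi-free module. A finite semi-basis yields a finite filtration $0=F(-1)\subseteq F(0)\subseteq\cdots\subseteq F(N)=F_M$ whose successive quotients $F(i)/F(i-1)$ are finite direct sums of shifts $\Sigma^{m}A$. Each inclusion $F(i-1)\hookrightarrow F(i)$ is degreewise split with free cokernel, hence gives an exact triangle in $\mathscr{D}(A)$; by induction $F_M\in\langle {}_AA\rangle$. Since ${}_AA$ generates $\mathscr{D}^c(A)$ as a thick subcategory, $F_M$ is compact, and as $F_M\cong M$ in $\mathscr{D}(A)$, so is $M$. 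Note that minimality is not needed here.

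For the converse ``compact $\Rightarrow$ finite semi-basis'', I would first observe that a compact $M$ lies in $\langle {}_AA\rangle$, hence in some $\langle {}_AA\rangle_n$; thus $M$ is a direct summand of a finite iterated extension of finitely many shifts $\Sigma^{m_i}A$. Because $A$ is connected its cohomology vanishes in negative degrees, and only finitely many shifts occur, so $M\in\mathscr{D}^+(A)$ and the preceding lemma furnishes a minimal semi-free resolution $F_M$. Now apply the triangulated functor $k\otimes_A^L-$: it sends each $\Sigma^{m_i}A$ to the one-dimensional $\Sigma^{m_i}k$, and a straightforward induction on the number of extension steps, using that direct summands and suspensions preserve finite total dimension, gives $\dim_k\mathrm{Tor}^{A}(k,M)<\infty$. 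By the dictionary of the first paragraph the semi-basis of $F_M$ is then finite.

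The step I expect to require the most care is the dictionary together with the boundedness bookkeeping. One must check carefully that $k\otimes_A F_M$ genuinely computes the derived tensor product (K-flatness of semi-free modules) and that minimality makes its differential vanish, and --- crucially --- that for compact $M$ the space $\mathrm{Tor}^{A}(k,M)$ is finite-\emph{dimensional} in total, not merely in each degree. The latter is where compactness is essential: an arbitrary object of $\mathscr{D}^+(A)$ may have an infinite semi-basis, whereas the $\langle {}_AA\rangle_n$-structure of a compact object simultaneously bounds the dimension in each degree and the range of degrees in which $\mathrm{Tor}^{A}(k,M)$ is nonzero. One should emphasize that $M$ itself need not have bounded cohomology: $A={}_AA$ is compact yet typically cohomologically unbounded, while $\mathrm{Tor}^{A}(k,A)=k$ remains one-dimensional, consistent with $A$ being its own minimal semi-free resolution on the single semi-basis element $1$.
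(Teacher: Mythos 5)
Your proof is correct, and it is essentially the standard argument behind the cited result: the paper itself gives no proof here (it quotes \cite[Proposition 3.3]{MW1}), and the argument in that reference runs along the same lines you use, namely the dictionary identifying the semi-basis of a minimal semi-free resolution with $\mathrm{Tor}^A(k,M)$ (minimality killing the induced differential on $k\otimes_A F_M$), the finite semi-free filtration giving $F_M\in\langle {}_AA\rangle$ in one direction, and in the other direction the thick-subcategory induction showing that compactness forces $\dim_k\mathrm{Tor}^A(k,M)<\infty$, after first using boundedness below to invoke the existence of a minimal semi-free resolution. No gaps; your ordering of steps (compact $\Rightarrow$ $\mathscr{D}^+(A)$ $\Rightarrow$ existence of $F_M$ $\Rightarrow$ finiteness via $\mathrm{Tor}$) is exactly what makes the converse direction work.
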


 A connected cochain DG algebra $A$ is called homologically smooth if $A$ as an $A^e$-module is compact.  In DG homological algebra,
homologically smooth DG algebras play a similar role as regular
ring do in classical homological ring theory. We have the following lemma.
\begin{lem} \cite[Corollary 2.7]{MW3}
The connected cochain DG algebra $A$ is homologically smooth if and only if
$k\in \mathscr{D}^c(A)$.
\end{lem}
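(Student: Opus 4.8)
The plan is to prove both implications at once, by reducing each of the two compactness conditions to the finite-dimensionality of the single graded vector space $E=\Ext_A(k,k)$ and invoking the characterization of compactness via minimal semi-free resolutions \cite[Proposition 3.3]{MW1}. The starting observation is purely formal: for any connected cochain DG algebra $B$ and any $M\in\mathscr{D}^+(B)$, let $F\to M$ be a minimal semi-free resolution with semi-basis $Z$, so that $F^{\#}\cong B^{\#}\otimes kZ$. Then a $B$-linear map $F\to k$ is determined by its restriction to $kZ$, giving $\Hom_B(F,k)\cong (kZ)^{*}$; and minimality $\partial_F(F)\subseteq B^{\ge 1}F$ forces the induced differential on $(kZ)^{*}$ to vanish, since $\partial_F z=\s_i b_i\otimes z_i$ with $\varepsilon_B(b_i)=0$. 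Hence $\Ext_B(M,k)\cong (kZ)^{*}$, so that $F$ has a finite semi-basis if and only if $\dim_k\Ext_B(M,k)<\infty$. Combined with \cite[Proposition 3.3]{MW1} this reads: $M$ is compact over $B$ if and only if $\Ext_B(M,k)$ is finite-dimensional.

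First I would apply this with $B=A$ and $M={}_Ak$. Here $\Ext_A(k,k)=E$, so $k\in\mathscr{D}^c(A)$ if and only if $\dim_k E<\infty$. Next I would apply it with $B=A^e$ and $M={}_AA_A$. Note that $A^e=A\otimes A^{op}$ is again a connected cochain DG algebra (its degree-zero part is $k\otimes k=k$ and it is concentrated in non-negative degrees), so \cite[Proposition 2.4]{MW1} and \cite[Proposition 3.3]{MW1} apply verbatim over $A^e$, and the trivial $A^e$-module is exactly the trivial bimodule $k$. Thus $A$ is compact over $A^e$, i.e. $A$ is homologically smooth, if and only if $\dim_k\Ext_{A^e}(A,k)<\infty$.

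The crux is therefore the identification $\Ext_{A^e}(A,k)\cong \Ext_A(k,k)=E$. To obtain it I would resolve the diagonal bimodule by the reduced two-sided bar resolution $A\otimes{}_{\tau}B(A)_{\tau}\otimes A\xrightarrow{\ \sim\ }A$ over $A^e$, whose $n$-th graded piece is the free $A^e$-module $A^e\otimes (\Sigma A^{\ge 1})^{\otimes n}$. Applying $\Hom_{A^e}(-,k)$ and using $\Hom_{A^e}(A^e\otimes V,k)\cong\Hom_k(V,k)$ collapses this to the complex $\bigoplus_n\Hom_k((\Sigma A^{\ge 1})^{\otimes n},k)$, in which the trivial action of $A^e$ annihilates the two outer multiplication terms of the bar differential; the surviving differential is exactly that of the one-sided bar complex $\Hom_A(A\otimes B(A),k)\cong\Hom_k(B(A),k)$ computing $\Ext_A(k,k)$. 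Taking cohomology yields the isomorphism, and chaining the three equivalences gives $A \text{ homologically smooth}\iff \dim_k E<\infty\iff {}_Ak\in\mathscr{D}^c(A)$.

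The main obstacle is the bar bookkeeping in the last step: one must check that the reduced two-sided bar construction is a genuine semi-free resolution of ${}_AA_A$ over $A^e$ (so that it legitimately computes $\Ext_{A^e}(A,k)$, even though it is not minimal), and, most delicately, that under $\Hom_{A^e}(-,k)$ its differential collapses precisely onto the bar differential computing $\Ext_A(k,k)$ with trivial coefficients, with the Koszul signs arising from the suspensions $\Sigma A^{\ge 1}$ tracked correctly. For the forward implication there is also a quicker route that sidesteps this: the triangulated functor $-\otimes_A^L k\colon\mathscr{D}(A^e)\to\mathscr{D}(A)$ sends the free bimodule $A^e$ to the compact object ${}_AA$ and sends the diagonal bimodule $A$ to $k$, so it maps $\mathscr{D}^c(A^e)=\langle A^e\rangle$ into $\langle {}_AA\rangle=\mathscr{D}^c(A)$; hence $A$ compact over $A^e$ forces $k$ compact over $A$.
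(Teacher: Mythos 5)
The paper itself offers no proof of this lemma: it is quoted verbatim from \cite[Corollary 2.7]{MW3}, so there is no in-paper argument to compare yours against, and your attempt has to be judged on its own. Judged so, your reduction is sound and very much in the Mao--Wu style. The observation that minimality kills the differential on $\Hom_B(F,k)$, so that a minimal semi-free resolution of $M\in\mathscr{D}^+(B)$ has finite semi-basis exactly when $\dim_k\Ext_B(M,k)<\infty$, is precisely the device this paper uses over and over in Sections 5 and 6; combined with \cite[Propositions 2.4 and 3.3]{MW1}, applied both to $A$ and to the connected cochain DG algebra $A^e$, it correctly reduces the lemma to the single identification $\Ext_{A^e}(A,k)\cong\Ext_A(k,k)=E$. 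Your shortcut for the forward implication is also complete and correct as stated: $-\otimes_A^Lk$ is triangulated, sends $A^e$ to ${}_AA$ and the diagonal bimodule to ${}_Ak$, hence carries $\mathscr{D}^c(A^e)=\langle A^e\rangle$ into $\langle {}_AA\rangle=\mathscr{D}^c(A)$.

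The one genuine soft spot is the one you flag yourself: that the reduced two-sided bar construction is a semi-free (or at least K-projective) resolution of $A$ over $A^e$. This is true for connected cochain DG algebras, but it is not mere bookkeeping. The word-length filtration has subquotients $A^e\otimes(\Sigma A^{\ge 1})^{\otimes n}$ whose generators are not cocycles, and no filtration built from the naive invariants (word length, internal degree) gives a semi-basis: the internal part of the bar differential \emph{raises} internal degree while the outer multiplications lower it, so the two requirements pull in opposite directions. A correct fix is to first split $A^{\ge 1}$, as a complex of $k$-spaces, into cocycles and contractible two-dimensional pieces; taking bar words in the resulting basis, every letter is either a cocycle or has cocycle differential, and the function $\nu(w)=2\ell(w)+\#\{\text{non-cocycle letters of }w\}$ strictly decreases along every term of the differential, yielding an honest semi-basis filtration. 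Alternatively, you can bypass the bar construction entirely: take any semi-free resolution $F\to A$ over $A^e$ (it exists since $A\in\mathscr{D}^+(A^e)$), recall from \cite[Proposition 1.3]{FIJ} that $F$ is K-projective on either side, and use the hom-tensor adjunction $\Hom_{A^e}(F,\Hom_k(k,k))\cong\Hom_A(F\otimes_Ak,k)$, noting that $F\otimes_Ak$ is again semi-free over $A$ and quasi-isomorphic to $A\otimes_A^Lk\cong k$; this gives $\Ext_{A^e}(A,k)\cong\Ext_A(k,k)$ in two lines and, spliced into your argument, makes the proof complete.
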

In \cite{HW},  the third author and Wu introduce the concept of
Koszul DG algebras. By the definition,  $A$ is called Koszul if ${}_Ak$ admits a minimal semi-free resolution, which has a semi-basis concentrated in degree $0$.  When $A$ is a homologically smooth Koszul DG algebra, we have the following result.
\begin{lem}\cite[Lemma 9.2]{MW2}\label{local}
Let $A$ be a homologically smooth Koszul DG algebra. Then its Ext-algebra $E$ is a finite dimensional local algebra concentrated in degree $0$.
\end{lem}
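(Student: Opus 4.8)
The plan is to read the Ext-algebra $E$ straight off the minimal semi-free resolution of ${}_Ak$ that the Koszul hypothesis provides, and to extract each of the three asserted properties from a separate structural feature of that resolution. First I would fix a minimal semi-free resolution $\eta\colon F\to {}_Ak$, which exists by \cite[Proposition 2.4]{MW1}. Since $A$ is Koszul, the semi-basis of $F$ is concentrated in degree $0$; writing $V$ for its $k$-linear span we then have $F^{\#}\cong A^{\#}\otimes V$ with $V=V^{0}$. As minimal semi-free resolutions are unique up to isomorphism, this $F$ is essentially the only minimal semi-free resolution of ${}_Ak$, so I may use it to test every property at once.

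The computational core is the evaluation of $E=H(R\Hom_A(k,k))=H(\Hom_A(F,k))$, for which semi-freeness of $F$ guarantees $R\Hom_A(k,k)\simeq \Hom_A(F,k)$. Using that $F$ is free on the degree-$0$ space $V$, any graded $A$-linear map $F\to k$ is determined by its restriction to $V$, and a map of degree $n$ sends $V\subseteq F^{0}$ into $k^{n}$, which is nonzero only when $n=0$. Hence the complex $\Hom_A(F,k)$ is concentrated in cohomological degree $0$; its differential is therefore forced to vanish (equivalently, minimality gives $\partial_F(F)\subseteq A^{\ge 1}F$ and $f(A^{\ge 1}F)=A^{\ge 1}f(F)\subseteq A^{\ge 1}k=0$ for every $A$-linear $f$). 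Consequently $E=\Hom_A(F,k)\cong \Hom_k(V,k)=V^{*}$ as graded vector spaces, and since $V=V^{0}$ this shows $E$ is concentrated in degree $0$. For finite-dimensionality I would bring in homological smoothness: by \cite[Corollary 2.7]{MW3} it is equivalent to ${}_Ak\in\mathscr{D}^c(A)$, and \cite[Proposition 3.3]{MW1} then forces the minimal semi-free resolution $F$ to have a finite semi-basis, i.e.\ $\dim_k V<\infty$; therefore $\dim_k E=\dim_k V^{*}<\infty$.

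It remains to prove that $E$ is local, which I would deduce from the indecomposability of $k$ in $\mathscr{D}(A)$. Since $E$ is concentrated in degree $0$, we have $E=E^{0}=\Ext^0_A(k,k)=\operatorname{End}_{\mathscr{D}(A)}(k)$ with composition as product. If $k\cong M\oplus N$ in $\mathscr{D}(A)$, then applying cohomology yields $k=H(k)\cong H(M)\oplus H(N)$; as $\dim_k H(k)=1$, one summand has vanishing cohomology and hence is zero in $\mathscr{D}(A)$, so $k$ is indecomposable. Because $\mathscr{D}(A)$ is idempotent complete (it has arbitrary coproducts), idempotents of $\operatorname{End}_{\mathscr{D}(A)}(k)$ correspond to direct summands of $k$, so $E$ has no idempotents other than $0$ and $1$. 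A finite-dimensional $k$-algebra with no nontrivial idempotents is local: otherwise its semisimple quotient $E/J(E)$ would carry a nontrivial idempotent, which lifts along the nilpotent radical $J(E)$ to a nontrivial idempotent of $E$. Thus $E$ is a finite-dimensional local algebra concentrated in degree $0$.

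I expect the main obstacle to be the grading and differential bookkeeping in the middle paragraph: one must check with the correct sign conventions that $\Hom_A(F,k)$ genuinely lands in cohomological degree $0$ and that the identification $E\cong V^{*}$ is degree-preserving, so that ``concentrated in degree $0$'' is literally the cohomological statement required. The locality step is conceptually the subtlest but technically brief once the indecomposability of $k$ has been secured.
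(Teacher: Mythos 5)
Your proof is correct. A preliminary remark: the paper does not actually prove this lemma --- it is quoted verbatim from \cite[Lemma 9.2]{MW2} --- so the only internal material to compare against is the way the paper runs the same computation inline for its concrete examples (e.g.\ Propositions \ref{trivialdgfree} and \ref{trivialpoly}). Your first two paragraphs follow exactly that template: minimality plus the degree-$0$ semi-basis force $\Hom_A(F,k)$ to be concentrated in degree $0$ with vanishing differential, and compactness of ${}_Ak$ (via \cite[Corollary 2.7]{MW3} together with \cite[Proposition 3.3]{MW1}) gives finite dimensionality; the paper does the same, except that it prefers to realize $E$ as $Z^0(\Hom_A(F,F))$, viewed as a subalgebra of $\operatorname{End}_k(V)$, because in the examples it needs the multiplication explicitly. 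Where you genuinely diverge is the locality statement: the paper's examples verify locality only a posteriori, by explicit matrix computation after $E$ has been determined, whereas you give a general categorical argument --- $E=E^0=\operatorname{End}_{\mathscr{D}(A)}(k)$, the object $k$ is indecomposable because $\dim_k H(k)=1$, idempotents split since $\mathscr{D}(A)$ has arbitrary coproducts and is therefore idempotent complete, and a finite-dimensional algebra whose only idempotents are $0$ and $1$ is local by lifting idempotents modulo the nilpotent radical. This yields a complete, self-contained substitute for the uncited proof in \cite{MW2}, and it is arguably more robust than the matrix-by-matrix verifications in the paper. Two small points are worth making explicit in a final write-up: that the multiplication on $E^0$ (Yoneda/composition via $\Hom_A(F,F)$) agrees with composition in $\operatorname{End}_{\mathscr{D}(A)}(k)$, which is what your idempotent argument uses; and that the uniqueness of minimal semi-free resolutions is precisely what lets you impose the Koszul (degree-$0$) and finiteness (compactness) conditions on the \emph{same} resolution $F$ --- you do flag both, so nothing is missing.
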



\section{proof of theorem a}
In this section, we want to prove Theorem A, which gives a
 shortcut to  compute the derived Picard group of homologically smooth  Koszul  connected DG algebras.
To prove Theorem A, we still need some preparation. The following lemma is proved by Lef$\grave{e}$vre in \cite[Ch.2]{Lef}, and also can be found in \cite[Theorem 4.1]{HW}.
\begin{lem}\label{codeq}  Let $C$ be a cocomplete DG coalgebra, $A$ an augmented DG
algebra and $\tau:C\to A$ is a twisting cochain. The following are
equivalent
\begin{itemize}
    \item [(i)] The map $\tau$ induces a quasi-isomorphism $\Omega(C)\to
    A$;
    \item [(ii)] The map $\varepsilon^{ACA}:A\otimes_\tau C_{\tau} \otimes A\to A$ is a
    quasi-isomorphism;
    \item [(iii)] The functors $L$ and $R$ induce equivalences of triangulated categories (also denoted by $L$ and $R$)
     $$L:\mathscr{D}(\mathrm{DGcocom}\,C)\rightleftarrows\mathscr{D}(A^{op}):R.\quad \square$$
\end{itemize}
\end{lem}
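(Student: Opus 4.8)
The plan is to establish the two equivalences (ii)$\Leftrightarrow$(iii) and (i)$\Leftrightarrow$(ii) separately: the first is formal, resting on the adjunction $(L,R)$ together with the fact that $A$ compactly generates $\mathscr{D}(A^{op})$; the second is the ``fundamental theorem of twisting cochains'', and is where the real analysis sits.

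For (ii)$\Leftrightarrow$(iii), the key observation is that the two-sided twisted tensor product is exactly the value of $LR$ on the free module. Unwinding the definitions gives a natural identification $LR(A)=(A\otimes_\tau C)\,{}_\tau\otimes A=A\otimes_\tau C_\tau\otimes A$, under which the counit $\epsilon_A\colon LR(A)\to A$ of the adjunction $L\dashv R$ is precisely the map $\varepsilon^{ACA}$. Hence (iii)$\Rightarrow$(ii) is immediate, by evaluating the counit at $A$. For (ii)$\Rightarrow$(iii) I would argue that the full subcategory of $\mathscr{D}(A^{op})$ on which $\epsilon$ is an isomorphism is a localizing subcategory — triangulated and closed under coproducts, because $L$ and $R$ are triangulated and commute with coproducts — and that it contains $A$; since $A$ is a compact generator of $\mathscr{D}(A^{op})$, this subcategory is all of $\mathscr{D}(A^{op})$, so $\epsilon$ is a natural isomorphism. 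The unit $\eta$ is then forced to be invertible by a triangle identity: for every cocomplete comodule $N$ one has $\epsilon_{LN}\circ L(\eta_N)=\mathrm{id}_{LN}$, and as $\epsilon_{LN}$ is a quasi-isomorphism so is $L(\eta_N)$; by the very definition of weak equivalence this says $\eta_N$ is a weak equivalence, i.e.\ an isomorphism in $\mathscr{D}(\mathrm{DGcocom}\,C)$. With both unit and counit invertible, $L$ and $R$ are mutually inverse equivalences, which is (iii).

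For (i)$\Leftrightarrow$(ii) I would factor $\tau=f\circ\pi$ through the universal twisting cochain $\pi\colon C\to\Omega(C)$, where $f\colon\Omega(C)\to A$ is the DG algebra morphism classified by $\tau$; condition (i) is exactly that $f$ is a quasi-isomorphism. Comparing two-sided constructions produces a commutative square whose bottom edge is $f$, whose left edge is the augmentation $\Omega(C)\otimes_\pi C_\pi\otimes\Omega(C)\to\Omega(C)$, whose right edge is $\varepsilon^{ACA}$, and whose top edge is $\Phi=f\otimes\mathrm{id}_C\otimes f$. The left edge is always a quasi-isomorphism: this is the fundamental acyclicity of the two-sided cobar resolution, which for the universal cochain $\pi$ follows from a direct filtration computation. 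The decisive point is then that $f$ is a quasi-isomorphism if and only if $\Phi$ is: filtering both twisted tensor products by the filtration of $C$ furnished by cocompleteness (for each element $\overline{\Delta}^{\,n}$ eventually vanishes, so the filtration is exhaustive), the twisting part of the differential strictly lowers filtration degree, so on the associated graded $\Phi$ degenerates to $f\otimes\mathrm{id}_C\otimes f$ with untwisted differentials, which by the K\"unneth theorem over the field $k$ is a quasi-isomorphism exactly when $f$ is. A convergent comparison of the two associated spectral sequences lifts this to $\Phi$ itself, and chasing the square then yields (i)$\Rightarrow$(ii); the reverse implication (ii)$\Rightarrow$(i) is obtained after noting that acyclicity of $\varepsilon^{ACA}$ can be read off the one-sided Koszul complex $A\otimes_\tau C$ via base change along the augmentation $A\to k$, where $A$-semifreeness of the two-sided complex is used.

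The main obstacle is the (i)$\Leftrightarrow$(ii) step, specifically the convergence of the filtration/spectral-sequence comparison. The twisted differential mixes filtration degrees, so the whole argument hinges on the filtration being exhaustive and suitably bounded; this is exactly what cocompleteness of $C$ guarantees, and without it neither the identification of the associated graded with the untwisted complex nor the convergence of the comparison would be available. By contrast the (ii)$\Leftrightarrow$(iii) half is soft once the counit is identified with $\varepsilon^{ACA}$, the only inputs being compact generation of $\mathscr{D}(A^{op})$ by $A$ and the triangle identities.
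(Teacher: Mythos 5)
First, a point of reference: the paper itself contains no proof of this lemma --- it is quoted from Lef\`evre [Lef, Ch.~2] and from [HW, Theorem 4.1] (hence the $\square$ in the statement), so your attempt has to be measured against the literature proof. Your (ii)$\Leftrightarrow$(iii) half is essentially sound and is indeed the formal skeleton used there: the identification of $\varepsilon^{ACA}$ with the counit of $L\dashv R$ at $A$, the localizing-subcategory argument against the compact generator $A$, and the triangle-identity trick for the unit (using that ``$L(\eta_N)$ is a quasi-isomorphism'' is literally the definition of $\eta_N\in\mathcal{W}$) all work, modulo one verification you should flag rather than assume: for the chain-level adjunction to descend to the localized categories, $R=-\otimes_\tau C$ must send quasi-isomorphisms to weak equivalences, which amounts to $A\otimes_\tau C_\tau\otimes A$ being semifree (K-flat) over $A$ and itself requires the cocompleteness filtration. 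Your (i)$\Rightarrow$(ii) is also correct: the filtration is exhaustive and bounded below, the associated graded is untwisted, and the spectral-sequence comparison theorem is applied in its valid direction.

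The genuine gap is the direction (ii)$\Rightarrow$(i) (equivalently (iii)$\Rightarrow$(i)). You assert ``$f$ is a quasi-isomorphism if and only if $\Phi$ is'' and justify the \emph{only if} half by ``a convergent comparison of the two associated spectral sequences.'' But the comparison theorem only runs one way: an isomorphism on some page $E_r$ gives an isomorphism on abutments; it never lets you descend from an isomorphism of abutments to an isomorphism on a page. That converse is a Zeeman-type statement, and it is not available in the stated generality (arbitrary augmented DG algebra $A$, unbounded degrees). Your fallback through the one-sided complex has the same defect: base change along $A\to k$ does legitimately convert (ii) into acyclicity of $A\otimes_\tau C$, but you are then left needing ``$f\otimes\mathrm{id}_C$ (twisted) is a quasi-isomorphism $\Rightarrow$ $f$ is a quasi-isomorphism,'' which is again the forbidden converse. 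The standard repair --- and, in substance, Lef\`evre's proof --- closes the cycle as (ii)$\Rightarrow$(iii)$\Rightarrow$(i): first prove the lemma for the universal twisting cochain $\pi\colon C\to\Omega(C)$, where an explicit contracting homotopy (or the filtration argument in its valid direction) shows $(L_\pi,R_\pi)$ is an equivalence; then observe the strict factorization $L_\tau=(-\otimes_{\Omega(C)}A)\circ L_\pi$, so that (iii) forces the derived induction $-\otimes^L_{\Omega(C)}A\colon\mathscr{D}(\Omega(C)^{op})\to\mathscr{D}(A^{op})$ to be an equivalence; finally, the unit of the induction--restriction adjunction at the object $\Omega(C)$ is exactly $f\colon\Omega(C)\to A$, and any adjoint of an equivalence is a quasi-inverse, so the unit is invertible and $f$ is a quasi-isomorphism. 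With this replacement for your last step the proof is complete.
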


\begin{lem}\label{important}
Let $E$ be a local finite dimensional $k$-algebra with a residue field $k$. We regard $E$ as a DG algebra concentrated in degree $0$. This makes
$E^*$ a DG coalgebra concentrated in degree $0$.
Assume that $\tau: E^*\to A$ is a twisting cochain such that $\tau$ induces a quasi-isomorphism of DG algebras $\Omega(E^*)\to A$. Then $E^*{}_{\tau}\otimes A\otimes_{\tau}E^*$ is quasi-isomorphic to $E^*$ as an $E$-bimodule.
\end{lem}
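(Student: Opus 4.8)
The plan is to use the Koszul-duality equivalence of Lemma \ref{codeq} to reduce the statement to a homology computation, and then pin down the bimodule structure by an explicit comparison map. Throughout I write $C=E^*$; since $E$ is local finite dimensional with residue field $k$, the coalgebra $C$ is finite dimensional, concentrated in degree $0$, and $\partial_C=0$. First I would record that $E^*{}_{\tau}\otimes A\otimes_{\tau}E^*$ carries two commuting $C$-coactions --- the left one being the residual left $C$-coaction on the first tensor factor of $E^*{}_{\tau}\otimes A$, and the right one coming from the functor $-\otimes_{\tau}E^*$ --- so it is a DG $C$-bicomodule, equivalently (as $E$ is finite dimensional) a DG $E$-bimodule, and that the target $E^*$ is exactly the regular $C$-bicomodule under this identification.

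The first substantive step is to compute the homology of the one-sided factor $M:=E^*{}_{\tau}\otimes A=L(C)$, where $C$ is the regular right $C$-comodule. Since $\tau$ induces a quasi-isomorphism $\Omega(C)\to A$, Lemma \ref{codeq}(iii) makes $L$ and $R$ mutually inverse equivalences $\mathscr{D}(\mathrm{DGcocom}\,C)\cong\mathscr{D}(A^{op})$, so $M=L(C)\cong L\big(R(k)\big)\cong k$ in $\mathscr{D}(A^{op})$; here I use the direct computation $R(k)=k\otimes_{\tau}C=C$ with zero differential (the twisting term dies because $\tau$ lands in $A^{\ge 1}=\ker\varepsilon_A$). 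Hence $H(M)=k$, concentrated in degree $0$. Next I would filter $M\otimes_{\tau}C=E^*{}_{\tau}\otimes A\otimes_{\tau}E^*$ by the coradical filtration of the \emph{rightmost} tensor factor $C$. Because $\partial_C=0$ and the right twisting term strictly lowers the coradical degree of that factor, the associated graded differential is $\partial_M\otimes\mathrm{id}$, so the resulting spectral sequence has $E_1=H(M)\otimes\mathrm{gr}(C)=k\otimes\mathrm{gr}(C)$, which is entirely concentrated in cohomological degree $0$. It therefore degenerates, giving $H\big(E^*{}_{\tau}\otimes A\otimes_{\tau}E^*\big)\cong E^*$, concentrated in degree $0$.

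Finally I would upgrade this to an $E$-bimodule quasi-isomorphism by exhibiting the explicit map
$$\iota:E^*\longrightarrow E^*{}_{\tau}\otimes A\otimes_{\tau}E^*,\qquad \iota(c)=\s_{(c)}c_{(1)}\otimes 1_A\otimes c_{(2)}.$$
By coassociativity $\iota$ is a morphism of $C$-bicomodules, and a short computation shows that the two twisting contributions to $\delta(\iota(c))$ are both $\s c_{(1)}\otimes\tau(c_{(2)})\otimes c_{(3)}$ with opposite signs, so $\iota$ is a chain map. Its top-coradical-degree component in the right factor is $1_C\otimes 1_A\otimes c$, i.e. the generator $1_C\otimes 1_A$ of $H(M)=k$ tensored with $c$; hence $\iota$ respects the filtration above and induces the identity on $\mathrm{gr}$ of $H^0$, therefore an isomorphism on $H^0$. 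Together with the vanishing of $H^{\ne 0}$ this shows $\iota$ is a quasi-isomorphism of $E$-bimodules, which is the assertion.

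I expect the main obstacle to be bookkeeping rather than conceptual: one must keep the left and right $C$-coactions genuinely separate even though both are governed by the single coproduct $\Delta$, verify that $\iota$ is simultaneously a bicomodule morphism and a chain map, and --- most importantly --- set up the second filtration so that $E_1$ sits in a single cohomological degree. This last point is what converts the formal coderived-category isomorphism $M\cong k$ into an honest quasi-isomorphism of complexes; the analogous coradical filtration applied to $M$ itself does \emph{not} degenerate (its $E_1$ involves the unbounded $H(A)$), so one genuinely needs the equivalence, rather than a filtration, to obtain $H(M)=k$.
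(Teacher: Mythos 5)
Your proposal is correct, and it ends with exactly the same comparison map as the paper: your $\iota(c)=\sum c_{(1)}\otimes 1_A\otimes c_{(2)}$ is precisely the paper's $\eta^{E^*AE^*}=(\mathrm{id}\otimes\eta_A\otimes\mathrm{id})\circ\mu_E^*$, and your identification of $E$-bimodules with $E^*$-bicomodules replaces the paper's direct check that $\mu_E^*$ is a bimodule map. Where you genuinely diverge is in proving that this map is a quasi-isomorphism. The paper treats that step as a black box: it quotes Avramov's Theorem 4.1 twice (the ``if'' part to get acyclicity of $\tau$ from the quasi-isomorphism $\Omega(E^*)\to A$, then the ``only if'' part to conclude that $\eta^{E^*AE^*}$ is a quasi-isomorphism). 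You instead derive it from the paper's own Lemma \ref{codeq}: the counit $L(R(k))\to k$ of the induced adjoint equivalence gives $H(E^*{}_{\tau}\otimes A)\cong k$ (your computation $R(k)=E^*$ with zero differential is right, since $\tau$ lands in $A^{\ge 1}$), and then a spectral sequence for the coradical filtration on the rightmost $E^*$-factor, degenerating because the whole $E_1$-page sits in cohomological degree $0$. Your route buys a self-contained argument that makes visible exactly where the hypotheses enter ($E^*$ finite dimensional and concentrated in degree $0$, which is what forces degeneration); the paper's route buys brevity and independence from those special features, since Avramov's theorem concerns arbitrary acyclic twisting cochains.

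Two points you should make explicit to close the argument. First, the claim that the right twisting term \emph{strictly} lowers coradical degree requires $\tau(1_{E^*})=0$: writing $\Delta(c)=c\otimes 1+1\otimes c+\overline{\Delta}(c)$, the summand $1\otimes c$ contributes $m\tau(1_{E^*})\otimes c$, which does not drop filtration. This vanishing is implicit in the hypothesis (a twisting cochain corresponds to a DG algebra map $\Omega(E^*)\to A$ only when it kills the coaugmentation), but it is not part of the paper's stated definition of twisting cochain, so it deserves a sentence. Second, you assert that $1_{E^*}\otimes 1_A$ generates $H(E^*{}_{\tau}\otimes A)\cong k$; this needs a one-line justification, e.g.\ the map $\varepsilon_{E^*}\otimes\varepsilon_A$ is a chain map (because $\varepsilon_A\circ\tau=0$ and $\varepsilon_A\circ\partial_A=0$) sending this cocycle to $1$, so its class is nonzero. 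With these two patches your proof is complete.
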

\begin{proof}
By \cite[4.2]{Avr}, the the composition map
$$\eta^{E^*AE^*}:E^*\stackrel{\mu_{E}^*}{\longrightarrow}E^*\otimes E^*=E^*\otimes \k\otimes E^*\stackrel{\mathrm{id}_{E^*}\otimes \eta_A\otimes\mathrm{id}_{E^*}}{\longrightarrow}E^*{}_{\tau}\otimes A\otimes_{\tau} E^*$$ is a morphism of both left and right DG $E^*$-comodules. So $\eta^{E^*AE^*}$ is a chain map.
We claim that $\eta^{E^*AE^*}$ is also a morphism of DG $E$-bimodules. To see this, we only need to show that $\mu_{E}^*$ is a morphism of $E$-bimodule by the definition of $\eta^{E^*AE^*}$. One sees that $\mu_E:{}_EE\otimes E_E\to {}_EE_E$ defined by $\mu_E(e\otimes e')=ee'$ is a morphism of $E$-bimodules since
we have
$$
\mu_E[(e\otimes e')(a\otimes a')]= \mu_E(ea\otimes a'e')=eaa'e'=(e\otimes e')(aa')=(e\otimes e')\cdot\mu_E(a\otimes a'),
$$
for any $e\otimes e', a\otimes a'\in {}_EE\otimes E_E$. One sees that $$\mu_E^*:({}_EE_E)^*\to \Hom_k({}_EE\otimes E_E,k)\cong (E_E)^*\otimes ({}_EE)^*$$ is also a morphism of $E$-bimodules since
$$
\mu_E^*[(e'\otimes e)\cdot f](a'\otimes a)=[(e'\otimes e)\cdot f](aa')=f(eaa'e')
$$
and
$$
[(e'\otimes e)\cdot \mu_E^*(f)](a'\otimes a)=\mu_E^*(f)(a'e'\otimes ea )=f(eaa'e'),$$ for any $a'\otimes a, e'\otimes e\in E_E\otimes {}_EE$.
Since $\tau$ induces a quasi-isomorphism $\Omega(E^*)\to A$ of DG algebras, one sees that $\tau$ is acyclic
by the ``if" part of \cite[Theorem 4.1]{Avr}. Then the ``only if" part of \cite[Theorem 4.1]{Avr} implies that
is a quasi-isomorphism.
\end{proof}

\begin{thm}\label{dpd}
Let $A$ be a homologically smooth Koszul DG algebra with Ext-algebra $E$. Then we have a group isomorphism $$\mathrm{DPic}(A)\cong \mathrm{DPic}(E).$$
\end{thm}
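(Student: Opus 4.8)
The plan is to deduce the isomorphism from \emph{Koszul duality} by transporting auto-equivalences, rather than by pushing bimodules through a single functor. First I would reformulate both sides in terms of auto-equivalences. By Proposition \ref{chartilt} the assignment $X \mapsto (X\otimes_A^L-)$ sends a tilting DG $A^e$-module to a triangle auto-equivalence of $\mathscr{D}^c(A)$, and since $(X\otimes_A^L-)\circ(Y\otimes_A^L-)\cong (X\otimes_A^L Y)\otimes_A^L-$ while $[X][Y]=[X\otimes_A^L Y]$ in $\mathrm{DPic}(A)$, this is a group homomorphism; it is injective because evaluating $X\otimes_A^L-$ at ${}_AA$ returns $X$. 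Using derived Morita theory (every coproduct-preserving auto-equivalence of $\mathscr{D}(A)=\mathrm{Ind}\,\mathscr{D}^c(A)$ is represented by an invertible DG $A^e$-module, the smoothness of $A$ guaranteeing a unique enhancement), I would upgrade this to an identification of $\mathrm{DPic}(A)$ with the \emph{whole} group $\mathrm{Auteq}(\mathscr{D}^c(A))$. Over the finite-dimensional local algebra $E$ the analogous identification $\mathrm{DPic}(E)\cong\mathrm{Auteq}(\mathscr{D}^b(\mathrm{mod}\,E))$ is the classical theorem of Rickard and Rouquier--Zimmermann that every triangle auto-equivalence of the bounded derived category of a finite-dimensional $k$-algebra is of standard type.

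The second step is to produce the equivalence to conjugate by. Since $A$ is homologically smooth Koszul with finite-dimensional local Ext-algebra $E$ (Lemma \ref{local}), there is a twisting cochain $\tau\colon E^*\to A$ inducing a quasi-isomorphism $\Omega(E^*)\xrightarrow{\sim}A$, so Lemma \ref{codeq} gives inverse equivalences $L=-{}_\tau\otimes A$ and $R=-\otimes_\tau E^*$ between $\mathscr{D}(\mathrm{DGcocom}\,E^*)$ and $\mathscr{D}(A^{op})$. As $E^*$ is finite-dimensional, DG $E^*$-comodules are DG $E$-modules, and I would show that under the homological smoothness of $A$ this equivalence restricts to a triangle equivalence $\Phi\colon\mathscr{D}^c(A)\xrightarrow{\sim}\mathscr{D}^b(\mathrm{mod}\,E)$, matching the compact objects on the smooth side with the objects of finite-dimensional cohomology on the finite side. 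Conjugation $F\mapsto \Phi\,F\,\Phi^{-1}$ is then a group isomorphism $\mathrm{Auteq}(\mathscr{D}^c(A))\xrightarrow{\sim}\mathrm{Auteq}(\mathscr{D}^b(\mathrm{mod}\,E))$, and combined with the two identifications of the first step this yields $\mathrm{DPic}(A)\cong\mathrm{DPic}(E)$ (passing, if necessary, to the opposite group, which is harmless because $\mathrm{DPic}(E)\cong\mathbb{Z}\times\mathrm{Pic}_k(E)$ is abelian).

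To make the correspondence explicit at the level of tilting bimodules --- and in particular to realize surjectivity by constructing honest DG $A^e$-modules rather than appealing only to abstract representability --- I would promote $\tau$ to the two-sided twisted-tensor functors $\mathcal{F}(X)=E^*{}_\tau\otimes X\otimes_\tau E^*$ and $\mathcal{G}(W)=A\otimes_\tau W{}_\tau\otimes A$ between DG $A^e$-modules and DG $E^e$-modules; here one first checks that $A^e$ is again homologically smooth Koszul with Ext-algebra $E\otimes E^{op}$, so that the two-sided version of Lemma \ref{codeq} applies. Lemma \ref{important} supplies the essential computation $\mathcal{F}(A)\simeq E^*$, and the acyclicity of $\tau$ dually gives $\mathcal{G}(E^*)\simeq A$; together these record that the regular bimodules correspond as $A\leftrightarrow E^*$, which is exactly what is needed to recognize the conjugate of a standard functor as again standard and to transport a two-sided tilting complex over $E$ to an invertible DG $A^e$-module.

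The hard part will be the two finiteness/standardness inputs, not the conjugation, which is formal. Establishing that $\Phi$ restricts to $\mathscr{D}^c(A)\simeq\mathscr{D}^b(\mathrm{mod}\,E)$ --- the precise matching of compactness against finite-dimensionality, together with the coderived-versus-derived bookkeeping for the coalgebra $E^*$ --- is where homological smoothness is genuinely used and is the first obstacle. The second is the standardness of auto-equivalences, classical over $E$ but requiring derived Morita theory on the DG side to obtain surjectivity of $\mathrm{DPic}(A)\to\mathrm{DPic}(E)$. A related subtlety to keep in mind is that $\mathcal{F}$ sends the unit $A$ to $E^*$ rather than to $E$, and since $E^*$ need not be invertible in $\mathrm{DPic}(E)$ (the algebra $E$ need not be Frobenius, for instance when $H(A)$ is a free algebra), the functor $\mathcal{F}$ is \emph{not} monoidal and cannot serve as the group isomorphism on its own; it only furnishes the bimodules, while the group structure must be carried through the auto-equivalence picture.
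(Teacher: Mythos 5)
Your first two paragraphs rest on identifications that are not available, and this is a genuine gap rather than a technical omission. The statement you attribute to Rickard and Rouquier--Zimmermann --- that every triangle auto-equivalence of $\mathscr{D}^b(\mathrm{mod}\,E)$ is standard, so that $\mathrm{DPic}(E)\cong\mathrm{Auteq}(\mathscr{D}^b(\mathrm{mod}\,E))$ --- is not a theorem: Rickard's theory produces, from the \emph{existence} of an equivalence, \emph{some} two-sided tilting complex, and \cite[Corollary 1.9]{Yek1} only asserts that for a given equivalence $F$ there is a two-sided tilting complex $T$ with ${}_ET\cong F(E)$ as one-sided complexes; whether $F$ itself is isomorphic to a standard functor is a well-known open problem. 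The DG side is worse. Since $\Hom_{\mathscr{D}(A)}(A,A)=H^0(A)=k$ for a connected cochain DG algebra, a triangle functor on $\mathscr{D}(A)$ or $\mathscr{D}^c(A)$ determines the image of $A$ only as a one-sided DG module: the right DG $A$-structure is invisible to the bare triangulated category. This breaks your injectivity argument (evaluating $X\otimes_A^L-$ at ${}_AA$ returns ${}_AX$ in $\mathscr{D}(A)$, not $X$ in $\mathscr{D}(A^e)$; the bimodules $A$ and a twist ${}_1A_\sigma$ by a DG automorphism have isomorphic left restrictions), and it breaks surjectivity as well: derived Morita theory in the DG setting represents \emph{quasi-functors}, i.e.\ morphisms of enhancements, by bimodules, not arbitrary coproduct-preserving triangle functors, and uniqueness-of-enhancement results do not supply the missing lift of a triangle equivalence to the enhancement. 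So neither of your maps $\mathrm{DPic}\to\mathrm{Auteq}$ is known to be bijective, and the conjugation $F\mapsto\Phi F\Phi^{-1}$, while formal, has nothing rigorous to act on.

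What saves the theorem --- and what the paper actually does --- is to stay at the level of bimodules throughout, which is essentially the content of your third paragraph, promoted from a ``supplementary explicit realization'' to the entire proof. The paper's map is $\lambda(X)=(E{}_{\tau'}\otimes X)\otimes_{\tau'}E^*$, with the regular comodule $E$ (not $E^*$) as the left factor; because $\Psi\circ\Phi\simeq\mathrm{id}$, this sends the unit $A$ to $E$, so the non-monoidality problem you flag for $\mathcal{F}(X)=E^*{}_{\tau}\otimes X\otimes_{\tau}E^*$ does not arise. Yekutieli's corollary is invoked only so that $\lambda(X)$ underlies a tilting complex and $\lambda$ lands in $\mathrm{DPic}(E)$; multiplicativity is proved by an explicit interchange isomorphism $\theta$ of twisted tensor products, yielding an honest group homomorphism on $\mathrm{DPic}(A)^{op}$ (an anti-homomorphism on $\mathrm{DPic}(A)$); bijectivity is obtained by exhibiting the explicit inverse $\beta(T)=[(A\otimes_{\tau'}E^*)\otimes_{E^{op}}F_T]{}_{\tau'}\otimes A$, your $\mathcal{G}$, whose verification is where Lemma \ref{important} and the cotensor identities enter. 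Finally, the passage from $\mathrm{DPic}(A)^{op}$ to $\mathrm{DPic}(A)$ is harmless not because $\mathrm{DPic}(E)$ is abelian --- it is not in general, e.g.\ $\mathrm{DPic}(E)\cong\Bbb{Z}\times\mathrm{GL}_n(k)$ in Proposition \ref{trivialdgfree} --- but because every group is isomorphic to its opposite via $g\mapsto g^{-1}$. If you replace your auto-equivalence framework by these bimodule-level constructions, your outline becomes the paper's proof; as written, the first two paragraphs cannot be completed with known results.
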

\begin{proof}
By Lemma \ref{local}, $E$ is a finite dimensional local algebra concentrated in degree $0$.
So the
vector space dual $E^*$ is a coaugmented coalgebra which is of
course cocomplete. Hence all the DG $E^*$-comodules are cocomplete.
By \cite[4.5]{Avr}, we have a canonical twisting cochain $\tau: E^*\to
\Omega(E^*)$ satisfies the condition (i) in Lemma \ref{codeq}.
So we have equivalences
$$L:\mathscr{D}(\mathrm{DGcocom}\,E^*)\rightleftarrows \mathscr{D}(\Omega(E^*)^{op}):R.$$
Since $A$ is homologically smooth, we have ${}_Ak\in \mathscr{D}^c(A)$. By \cite[Proposition 4.2]{HW}, we have equivalences
$$L:\mathscr{D}^+(\mathrm{DGcocom}\,E^*)\rightleftarrows \mathscr{D}^+(\Omega(E^*)^{op}):R.$$
By \cite[Lemma 1.6.4]{Mon}, the category of left
$E$-modules is naturally equivalent to the category of right
$E^*$-comodules since $E$ is a finite dimensional algebra.  Hence we have equivalences
$$P:\mathscr{D}^+(E)\rightleftarrows \mathscr{D}^+(\mathrm{DGcocom}\,E^*):Q.$$
By \cite[Lemma 3.7]{HW}, there is a quasi-isomorphism $\varphi: \Omega(E^*)\to A$ of DG algebras. It induces the following equivalence of triangulated categories
\begin{align*}\label{finaldual}
\xymatrix{&\mathscr{D}^+(A^{op})\quad\quad\ar@<1ex>[r]^{\varphi^*}&\quad\quad
\mathscr{D}^+(\Omega(E^*)^{op})  \ar@<1ex>[l]^{-\otimes_{\Omega(E^*)}^LA}}.
\end{align*}
Let $\Phi=(-\otimes_{\Omega(E^*)}^LA)\circ L\circ P$ and $\Psi= Q\circ R\circ \varphi^*$.  We have the following equivalence of triangulated categories
\begin{align*}
\xymatrix{&\mathscr{D}^+(A^{op})\quad\quad\ar@<1ex>[r]^{\Psi}&\quad\quad
\mathscr{D}^+(E)  \ar@<1ex>[l]^{\Phi}}.
\end{align*}
By \cite[Corollary 4.6]{HW}, we
have an equivalence of triangulated categories
\begin{align*}
\xymatrix{&\mathscr{D}^c(A^{op})\quad\quad\ar@<1ex>[r]^{\Psi}&\quad\quad
\mathscr{D}^b(E)  \ar@<1ex>[l]^{\Phi}}
\end{align*}
when $\Phi$ and $\Psi$ are restricted to the full subcategories $\mathscr{D}^b(E)$ and $\mathscr{D}^c(A^{op})$ respectively.
For any tilting DG module $X$ of $A$, the functor $$-\otimes_A^L X: \mathscr{D}^c(A^{op})\to \mathscr{D}^c(A^{op})$$ is an
equivalence by Proposition \ref{chartilt}. One sees that $\Psi\circ (-\otimes_A^L X)\circ \Phi$ is a functor from $\mathscr{D}^b(E)$ to $\mathscr{D}^b(E)$.
Let $Y$ be the quasi-inverse of $X$. Then $X\otimes_{A}^LY\cong A$ and $Y\otimes_A^LX\cong A$ in $\mathscr{D}(A^e)$. We have
\begin{align*}
&\quad [\Psi\circ (-\otimes_A^L X)\circ \Phi]\circ [\Psi\circ (-\otimes_A^LY)\circ \Phi]  \\
=&[\Psi\circ (-\otimes_A^L X)\circ(-\otimes_{\Omega(E^*)}^LA)\circ L\circ P]\circ[Q\circ R\circ \varphi^*\circ (-\otimes_A^L Y)\circ\Phi]\\
\simeq & \Psi\circ (-\otimes_A^L X)\circ (-\otimes_A^L Y)\circ\Phi \\
= & \Psi \circ (-\otimes_A^L Y\otimes_A^L X)\circ\Phi \\
\cong &  \Psi \circ (-\otimes_A^L A)\circ\Phi \\
\cong &   \Psi \circ \Phi \\
\simeq & \mathrm{id}_{\mathscr{D}^b(E)}
\end{align*}
and
\begin{align*}
&\quad [\Psi\circ (-\otimes_A^L Y)\circ \Phi]\circ [\Psi\circ (-\otimes_A^LX)\circ \Phi]  \\
=&[\Psi\circ (-\otimes_A^L Y)\circ(-\otimes_{\Omega(E^*)}^LA)\circ L\circ P]\circ[Q\circ R\circ \varphi^*\circ (-\otimes_A^L X)\circ\Phi]\\
\simeq & \Psi\circ (-\otimes_A^L Y)\circ (-\otimes_A^L X)\circ\Phi \\
= & \Psi \circ (-\otimes_A^L X\otimes_A^L Y)\circ\Phi \\
\cong &  \Psi \circ (-\otimes_A^L A)\circ\Phi \\
\cong &   \Psi \circ \Phi \\
\simeq & \mathrm{id}_{\mathscr{D}^b(E)}.
\end{align*}
Hence $\Psi\circ (-\otimes_A^L X)\circ \Phi:\mathscr{D}^b(E)\to \mathscr{D}^b(E)$ is an equivalence.
By \cite[Corollary 1.9]{Yek1}, there is a tilting complex $T\in \mathscr{D}^b(E^e)$ with $${}_ET\cong \Psi\circ (-\otimes_A^L X)\circ \Phi(E) $$
in $\mathscr{D}(E)$. Hence we can define a map \begin{align*}
\lambda: \quad &\mathrm{DPic}(A)^{op} \to  \mathrm{DPic}(E)  \\
         & X\mapsto  \Psi\circ (-\otimes_A^L X)\circ \Phi(E).
\end{align*}
We claim that $\lambda$ is a group morphism.
 Let $\tau'=\varphi\circ \tau: E^*\to A$. Then $\tau'$ is a twisting cochain  satisfying  the condition $(i)$ in Theorem \ref{codeq},
since $\varphi$ is a quasi-isomorphism.
 By the definitions of $\Phi$ and $\Psi$,  we have the following isomorphisms in $\mathscr{D}(E^e)$
  \begin{align*}
\tag{Isom1} \Psi\circ (-\otimes_A^L X)\circ \Phi(E)&= \varphi^* [(E{}_{\tau}\otimes\Omega(E^*)\otimes^L_{\Omega(E^*)}A)\otimes_A^LX]\otimes_{\tau}E^* \\
&\cong (E{}_{\tau'}\otimes X)\otimes_{\tau'}E^*,
\end{align*}
where the DG left and DG right $E$-module structures on $(E_{\tau'}\otimes X)\otimes_{\tau'}E^*$ are inherited from $E^*$ and $E$, respectively.
Let $Y$ be another tilting DG module of $A$. As DG $A^e$-modules, $X$ and $Y$ admit K-projective resolutions $F_X$ and $F_Y$ respectively. By \cite[Proposition 1.3 (c)]{FIJ},
$F_X$ and $F_Y$ are K-projective as DG left (resp. right) $A$-modules.
Define
\begin{align*}
\theta: \quad & (E{}_{\tau'}\otimes F_X\otimes_{\tau'}E^*)\otimes_E(E{}_{\tau'}\otimes F_Y\otimes_{\tau'}E^*)\to E{}_{\tau'}\otimes F_Y\otimes_{\tau'}E^*{}_{\tau'}\otimes F_X\otimes_{\tau'} E^* \\
         &\,\,\, (e_1\otimes x\otimes f_1)\otimes (e_2\otimes y\otimes f_2) \mapsto (-1)^{|x|\cdot|y|}e_2\otimes y\otimes e_1\cdot f_2\otimes x\otimes f_1.
\end{align*}
Note that $E$ is concentrated in degree $0$. It is easy to check that $\theta$ is a chain map.
And $\theta$ is bijective with the following inverse chain map
\begin{align*}
\theta^{-1}: \quad & E{}_{\tau'}\otimes F_Y\otimes_{\tau'}E^*{}_{\tau'}\otimes F_X\otimes_{\tau'} E^*  \to  (E{}_{\tau'}\otimes F_X\otimes_{\tau'}E^*)\otimes_E(E{}_{\tau'}\otimes F_Y\otimes_{\tau'}E^*)\\
 &\quad\quad \,\, e\otimes y\otimes f\otimes x\otimes g\mapsto (-1)^{|x|\cdot|y|}(1\otimes x\otimes g)\otimes (e\otimes y\otimes f).
\end{align*}
Therefore,  we have the following isomorphism in $\mathscr{D}(E^e)$
\begin{align*}
\lambda(X\cdot Y)&=\lambda(Y\otimes_A^L X)\\
                             &=\lambda(F_Y\otimes_AF_X)\\
                             & = \Psi\circ (-\otimes_A F_Y\otimes_AF_X)\circ \Phi(E)\\
 &= \Psi\circ (-\otimes_A F_X)\circ (-\otimes_AF_Y)\circ \Phi(E)\\
 &\cong \Psi\circ (-\otimes_A F_X)\circ \Phi\circ \Psi\circ (-\otimes_A F_Y)\circ \Phi(E)\\
&\stackrel{(a)}{\cong}  \{[(E{}_{\tau'}\otimes F_Y)\otimes_{\tau'}E^*]{}_{\tau'}\otimes F_X\}\otimes_{\tau'}E^*\\
&=  E{}_{\tau'}\otimes F_Y\otimes_{\tau'}E^*{}_{\tau'}\otimes F_X\otimes_{\tau'} E^*\\
&\stackrel{(b)}{\cong} (E{}_{\tau'}\otimes F_X\otimes_{\tau'}E^*)\otimes_E(E{}_{\tau'}\otimes F_Y\otimes_{\tau'}E^*)\\
&\cong \lambda(F_X)\otimes_E \lambda(F_Y)\\
&\cong \lambda(X)\otimes_E^L \lambda(Y)= \lambda(X)\cdot \lambda(Y),
\end{align*}
where $(a)$ is obtained by using isomorphism (Isom1) twice, and $(b)$ is from the fact that $\theta$ is bijective.
Hence $\lambda$ is a group morphism. It remains to show that $\lambda$ is bijective.
For any tilting complex $T$ of $E$,  let $F_T$ be its K-projective resolution. We have
\begin{align*}
\tag{Isom2} \Phi\circ (T\otimes_E^L-)\circ \Psi(A) &= \{[T\otimes_E^L(\varphi^*(A)\otimes_{\tau}E^*)]{}_{\tau}\otimes\Omega(E^*)\}\otimes_{\Omega(E^*)}^LA\\
                                       &\cong [F_T\otimes_E(A\otimes_{\tau'} E^*)]{}_{\tau'}\otimes A \\
                                       &\cong [(A\otimes_{\tau'}E^*)\otimes_{E^{op}}F_T]{}_{\tau'}\otimes A.
\end{align*}
in $\mathscr{D}(A^{e})$.
We claim that $\Phi\circ (T\otimes_E^L-)\circ \Psi(A)$ is a tilting DG module of $A$. Indeed, if $W$ is the quasi-inverse tilting complex of $T$ and  $F_W$
is a K-projective resolution of $W$, then we have
\begin{align*}
&\quad\quad\quad [\Phi\circ (T\otimes_E^L-)\circ \Psi(A)]\otimes_{A}^L [\Phi\circ (W\otimes_E^L-)\circ \Psi(A)]  \\
&\stackrel{(c)}{\cong}\{[(A\otimes_{\tau'}E^*)\otimes_{E^{op}}F_T]{}_{\tau'}\otimes A\}\otimes_{A}^L \{[(A\otimes_{\tau'}E^*)\otimes_{E^{op}}F_W]{}_{\tau'}\otimes A\} \\
&\cong [(A\otimes_{\tau'}E^*)\otimes_{E^{op}}F_T]{}_{\tau'}\otimes \{[(A\otimes_{\tau'}E^*)\otimes_{E^{op}}F_W]{}_{\tau'}\otimes A\} \\
&\stackrel{(d)}{\cong} [(A\otimes_{\tau'}E^*)\otimes_{E^{op}}F_T]\square_{E^*}E^*{}_{\tau'}\otimes(A\otimes_{\tau'}E^*){}_{\tau'}\otimes_{E^{op}}F_W\otimes A \\
&\stackrel{(e)}{\cong} [(A\otimes_{\tau'}E^*)\otimes_{E^{op}}F_T]\square_{E^*}E^*{}_{\tau'}\otimes_{E^{op}}F_W\otimes A \\
&\stackrel{(f)}{\cong}  (A\otimes_{\tau'}E^*)\otimes_{E^{op}}F_T{}_{\tau'}\otimes_{E^{op}}F_W\otimes A \\
&\cong  (A\otimes_{\tau'}E^*)\otimes_{E^{op}}F_W\otimes_{E}F_T{}_{\tau'}\otimes A \\
&\cong  A\otimes_{\tau'}E^*\otimes_{E^{op}}E{}_{\tau'}\otimes A \\
&\cong A\otimes_{\tau'}E^*{}_{\tau'}\otimes A\\
&\stackrel{(g)}{\cong} A
\end{align*}
in $\mathscr{D}(A^{e})$, where the symbol ``$\square_{E^*}$" is the cotensor product over the coalgebra $E^*$ (cf. \cite[Definition 8.4.2]{Mon}), $(d)$ and $(f)$ are by \cite[Proposition 2.21, Proposition 2.3.6]{DNR}; $(c)$, $(e)$ and $(g)$ are obtained by (Isom2), Lemma \ref{important} and Lemma \ref{codeq}, respectively. Hence $\Phi\circ (T\otimes_E^L-)\circ \Psi(A)$ is a tilting DG module of $A$. So we have the following map
 \begin{align*}
\beta: \quad & \mathrm{DPic}(E)  \to \mathrm{DPic}(A)^{op}   \\
         & T\mapsto  \Phi\circ (T\otimes_E^L-)\circ \Psi(A).
\end{align*}
We have
\begin{align*}
\beta(T\otimes_E^LW)&= \beta(F_T\otimes_EF_W)\\
                    &\stackrel{(h)}{\cong}[(A\otimes_{\tau'}E^*)\otimes_{E^{op}}(F_T\otimes_{E}F_W)]{}_{\tau'}\otimes A\\
                    &\cong [(A\otimes_{\tau'}E^*)\otimes_{E^{op}}(F_W\otimes_{E^{op}}F_T)]{}_{\tau'}\otimes A\\
                    &\stackrel{(i)}{\cong} [(A\otimes_{\tau'}E^*)\otimes_{E^{op}}F_W]\square_{E^*}(E^*\otimes_{E^{op}}F_T){}_{\tau'}\otimes A \\
                    &\stackrel{(j)}{\cong} [(A\otimes_{\tau'}E^*)\otimes_{E^{op}}F_W]\square_{E^*}E^*{}_{\tau'}\otimes A\otimes_{\tau'}E^*{}_{\tau'}\otimes_{E^{op}}F_T\otimes A \\
                    &\stackrel{(k)}{\cong} [(A\otimes_{\tau'}E^*)\otimes_{E^{op}}F_W]{}_{\tau'}\otimes \{[A\otimes_{\tau'}E^*{}_{\tau'}\otimes_{E^{op}}F_T]\otimes A\} \\
                    &\cong \{[(A\otimes{}_{\tau'}E^*)\otimes_{E^{op}}F_W]{}_{\tau'}\otimes A\} \otimes_{A} \{[A\otimes_{\tau'}E^*{}_{\tau'}\otimes_{E^{op}}F_T]\otimes A\} \\
                     &\cong \{[(A\otimes_{\tau'}E^*)\otimes_{E^{op}}F_W]{}_{\tau'}\otimes A\} \otimes_{A} \{[(A\otimes_{\tau'}E^*)\otimes_{E^{op}}F_T]{}_{\tau'}\otimes A\} \\
                    &\cong \beta(F_W)\otimes_A \beta(F_T) \\
                    &\cong \beta(W)\otimes_A^L\beta(T)=\beta(T)\cdot \beta(W),
\end{align*}
where $(i)$ and $(k)$ are by \cite[Proposition 2.21, Proposition 2.3.6]{DNR}, $(h)$ and $(j)$ are obtained by (Isom2) and Lemma \ref{important}, respectively. Hence
$\beta$ is a group morphism.
 For any tilting DG module $X$ of $A$, let $F_X$ be a K-projective resolution of $X$. We have
\begin{align*}
\beta\circ \lambda(X) &= [(A\otimes_{\tau'}E^*)\otimes_{E^{op}}(E{}_{\tau'}\otimes X)\otimes_{\tau'}E^*]{}_{\tau'}\otimes A \\
                     &\cong A\otimes_{\tau'}E^*{}_{\tau'}\otimes X\otimes_{\tau'}E^*{}_{\tau'}\otimes A \\
                     &\cong A\otimes_{\tau'}E^*{}_{\tau'}\otimes F_X\otimes_{\tau'}E^*{}_{\tau'}\otimes A \\
                     &\cong A\otimes_{\tau'}E^*{}_{\tau'}\otimes A\otimes_A F_X\otimes_A A\otimes_{\tau'}E^*{}_{\tau'}\otimes A \\
                     &\cong A\otimes_AF_X\otimes_AA \cong F_X\cong X
\end{align*}
in  $\mathscr{D}(A^{e})$. So $\beta$ is the inverse of $\lambda$ and hence $\lambda:\mathrm{DPic}(A)^{op}\to \mathrm{DPic}(E)$ is a group isomorphism. For any group $G$, we always have a group isomorphism $\sigma: G\to G^{op}$ given by $\sigma(g)=g^{-1}$. Therefore, $\mathrm{DPic}(A)\cong \mathrm{DPic}(E)$.
\end{proof}

\begin{rem}\label{imprem}
Theorem \ref{dpd} offers a shortcut to compute the derived Picard group of a homologically smooth Koszul DG algebra $A$. Indeed, we have  $\mathrm{DPic}(A)\cong \mathrm{DPic}(E)$ by Theorem \ref{dpd}, where the Ext-algebra $E=H(R\Hom_A(k,k))$ of $A$
  is a finite dimensional local $k$-algebra. On the other hand, the derived Picard group of a finite dimensional local algebra $\mathcal{E}$ is isomorphic to $\Bbb{Z}\times \mathrm{Pic}_k (\mathcal{E})$ (cf. \cite{Zim,RZ}), and we have
$\mathrm{Pic}_k (\mathcal{E})\cong \mathrm{Out}_k(\mathcal{E})$, which is attributed to Fr\"{o}hlich \cite{Fro} and can also be found in \cite[Theorem 37.16]{Rei}.
    Therefore, the computations of the derived Picard group of a homologically smooth Koszul connected cochain DG algebra can be reduced to compute the outer automorphism group of a finite dimensional local algebra.
\end{rem}


\section{the derived picard groups of $4$ families of dg algebras }\label{gradedreg}
In this section, we will apply Theorem \ref{dpd} to compute the derived Picard group of a connected cochain DG algebra $A$ when $H(A)$ belongs to one of the following $4$
cases:
\begin{enumerate}
\item $H(A)=k[\lceil x\rceil ], x\in \mathrm{ker}(\partial_A^1)$; \\
\item $H(A)=k\langle \lceil x_1\rceil, \cdots, \lceil x_n\rceil \rangle, x_1,\cdots, x_n\in \mathrm{ker}(\partial_A^1)$;                                                 \\
\item $H(A)=\k[ \lceil x_1\rceil,\lceil x_2\rceil], x_1,x_2\in \mathrm{ker}(\partial_A^1)$; \\
\item $H(A)=\k\langle \lceil x_1\rceil,\lceil x_2\rceil\rangle/(\lceil x_1\rceil \lceil x_2\rceil+\lceil x_2\rceil \lceil x_1\rceil ), x_1,x_2\in \mathrm{ker}(\partial_A^1)$.
\end{enumerate}
For briefness, we only calculate in detail the derived Picard group of $A$ when $H(A)$ belongs to cases (2) and (3).
The computations for cases (1) and (4) are similar to the  case (3).

\begin{prop}\label{trivialdgfree}
 Let $A$ be a connected cochain DG algebra such that $$H(A)= k\langle \lceil x_1\rceil, \cdots, \lceil x_n\rceil \rangle,$$ for some degree one cocycle elements $x_1,\cdots, x_n$ in $A$.
 Then we have
 $$ \mathrm{DPic}(A)=[\Bbb{Z}\times \mathrm{GL}_n(k)].$$
 \end{prop}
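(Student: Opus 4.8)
The plan is to reduce the whole computation, via Theorem \ref{dpd}, to the Ext-algebra, and then, via Remark \ref{imprem}, to an outer-automorphism computation. First I would verify that $A$ is homologically smooth and Koszul, so that Theorem \ref{dpd} applies and yields $\mathrm{DPic}(A)\cong \mathrm{DPic}(E)$, where $E$ is the Ext-algebra of $A$. By Lemma \ref{local} the algebra $E$ is then a finite-dimensional local $k$-algebra, and by Remark \ref{imprem} we have $\mathrm{DPic}(E)\cong \Bbb{Z}\times \mathrm{Pic}_k(E)\cong \Bbb{Z}\times \mathrm{Out}_k(E)$. Hence it remains only to identify $E$ as an algebra and to compute $\mathrm{Out}_k(E)$.

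To check homological smoothness and Koszulness I would build the minimal semi-free resolution of ${}_Ak$. Writing $V=\bigoplus_{i=1}^n k\lceil x_i\rceil$, the free cohomology $H(A)=k\langle \lceil x_1\rceil,\ldots,\lceil x_n\rceil\rangle$ is the tensor algebra $T(V)$, which has global dimension one. I expect the resolution to take the shape $A\otimes(k e_0\oplus\bigoplus_i k e_i)\to k$ with $\partial(e_i)=x_i e_0$; since $|x_i|=1$ and the differential raises degree by one, the whole semi-basis $\{e_0,e_1,\ldots,e_n\}$ sits in degree $0$, which is exactly the Koszul condition, and it is finite, so ${}_Ak\in\mathscr{D}^c(A)$ and $A$ is homologically smooth. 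Dualizing and passing to cohomology, I would read off $\Ext^0_A(k,k)=k$, $\Ext^1_A(k,k)=V^*$ and $\Ext^{\ge 2}_A(k,k)=0$, so that as an algebra $E\cong k\ltimes V^*$, the square-zero extension whose Jacobson radical $\mathfrak{m}=V^*$ is $n$-dimensional and satisfies $\mathfrak{m}^2=0$; equivalently, $E$ is the Koszul dual of the free algebra $T(V)$.

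Finally I would compute $\mathrm{Out}_k(E)$ for $E=k\ltimes V^*$. Any $k$-algebra automorphism fixes $1$ and preserves $\mathfrak{m}$, and since $\mathfrak{m}^2=0$ there are no further constraints, so restriction to $\mathfrak{m}$ identifies $\mathrm{Aut}_k(E)$ with $\mathrm{GL}(V^*)\cong\mathrm{GL}_n(k)$. For inner automorphisms, conjugation by a unit $c+m$ with $c\in k^{\times}$ and $m\in\mathfrak{m}$ fixes every element of $\mathfrak{m}$, because all products inside $\mathfrak{m}$ vanish; hence $\mathrm{Inn}_k(E)$ is trivial and $\mathrm{Out}_k(E)=\mathrm{Aut}_k(E)\cong\mathrm{GL}_n(k)$. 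Combining the steps gives $\mathrm{DPic}(A)\cong\mathrm{DPic}(E)\cong \Bbb{Z}\times\mathrm{Out}_k(E)\cong \Bbb{Z}\times\mathrm{GL}_n(k)$, as claimed.

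The main obstacle I anticipate is pinning down $E$ for an \emph{arbitrary} $A$ with the prescribed cohomology rather than only the trivial DG free algebra: the differential of $A$ may be nonzero, so I cannot simply read $E$ off the underlying graded algebra, and I must argue that the homologically smooth Koszul hypotheses (or the DG Koszul duality of \cite{HW}) force the Ext-algebra to be this square-zero extension independently of the differential. Once $E$ is identified, the automorphism computation is entirely routine.
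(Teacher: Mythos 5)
Your overall strategy coincides with the paper's: the resolution you guess, $F^{\#}=A^{\#}e_0\oplus\bigoplus_i A^{\#}e_i$ with $\partial_F(e_i)=x_ie_0$, is exactly the minimal semi-free resolution the paper imports from the proof of \cite[Proposition 6.2]{MXYA}; from it one gets Koszulness and homological smoothness, reduces via Theorem \ref{dpd} and Remark \ref{imprem} to computing $\mathrm{Out}_k$ of the Ext-algebra, and your computation of $\mathrm{Aut}_k(k\ltimes V^*)\cong\mathrm{GL}_n(k)$ with trivial inner automorphisms is correct and in fact cleaner than the paper's matrix manipulations. Note also that the generality you worry about at the end is not a real obstacle: the construction of $F$ and everything that follows uses only that $x_1,\dots,x_n$ are linearly independent cocycles in $A^1$ whose classes freely generate $H(A)$ (exactness of $F\to k$ follows from the Eilenberg--Moore filtration argument together with exactness of $0\to T(V)\otimes V\to T(V)\to k\to 0$); the differential of $A$ never enters beyond $\partial_A(x_i)=0$.

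The genuine gap is in the step where you identify the algebra $E$, which is also the step you flag as unresolved. Your grading claim is false in this paper's conventions: since the semi-basis of $F$ is concentrated in degree $0$, $\Hom_A(F,k)$ is concentrated in degree $0$, so $\Ext^i_A(k,k)=0$ for all $i\neq 0$ and $\Ext^0_A(k,k)=k\oplus V^*$ is $(n+1)$-dimensional --- this collapse to degree $0$ is precisely what Koszulness means here (Lemma \ref{local}). In particular $\Ext^1_A(k,k)=0$, not $V^*$, so the classical argument ``products of $\Ext^1$-classes land in $\Ext^{\ge 2}=0$'' has no content: the whole algebra sits in a single cohomological degree and the grading says nothing about its multiplication. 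The paper closes exactly this hole by direct computation: by minimality and connectedness $E=Z^0(\Hom_A(F,F))$, a degree-$0$ endomorphism of $F$ is a scalar matrix $A_f\in M_{n+1}(k)$ on the semi-basis, and the cocycle condition $\partial_F\circ f=f\circ\partial_F$ (using linear independence of the $x_i$ in $A^1$) forces $f(e_0)=\lambda e_0$ and $f(e_i)=\lambda_ie_0+\lambda e_i$; multiplying two such radical endomorphisms visibly gives $0$, whence $E\cong k\langle y_1,\dots,y_n\rangle/(y_iy_j)$ is the square-zero extension. Alternatively, you could repair your own argument intrinsically: the two-step semi-free filtration of $F$ induces a multiplicative filtration on $E$ under which radical cocycles strictly lower the filtration step, so any product of two of them vanishes. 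Either way, some argument of this kind is needed before you may conclude $E\cong k\ltimes V^*$; once that is in place, the rest of your proof goes through and agrees with the paper's.
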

\begin{proof}
By the proof of \cite[Proposition 6.2]{MXYA}, ${}_Ak$ has a minimal semi-free resolution $F$ with $$F^{\#}=A^{\#}\oplus [A^{\#}\otimes (\bigoplus\limits_{i=1}^nk\Sigma e_{x_i})]$$ and $\partial_F$ is defined by $\partial_F(\Sigma e_{x_i})=x_i, i=1,2,\cdots, n$. Since $F$ admits a semi-basis
$\{1,\Sigma e_{x_1}, \cdots, \Sigma e_{x_n}\}$, $A$ is a homologically smooth Koszul cochain DG algebra.
By the minimality of $F$, we have $$H(\Hom_A(F,k))=\Hom_A(F,k)= k\cdot 1^*\oplus [\bigoplus\limits_{i=1}^nk\cdot (\Sigma e_{x_i})^*].$$
So the Ext-algebra $E=H(\Hom_A(F,F))$ is concentrated in degree $0$. On the other hand, $$\Hom_A(F,F)^{\#}\cong \{k\cdot 1^*\oplus [\bigoplus\limits_{i=1}^nk\cdot (\Sigma e_{x_i})^*]\} \otimes_{k} F^{\#}$$ is concentrated in degree $\ge 0$. This implies that $E= Z^0(\Hom_A(F,F))$.
Since $F^{\#}$ is a free graded $A^{\#}$-module with a basis $\{1,\Sigma e_{x_1}, \cdots, \Sigma e_{x_n}\}$ concentrated in degree $0$,
  the elements in  $\Hom_A(F,F)^0$ is one to one correspondence with the matrixes in $M_{n+1}(k)$. Indeed, any $f\in \Hom_A(F,F)^0$ is uniquely determined by
  a matrix $$A_f=(a_{ij})_{(n+1)\times (n+1)}\in M_{n+1}(k)$$ with
$$\left(
                         \begin{array}{c}
                          f(1) \\
                          f(\Sigma e_{x_1})\\
                          \vdots\\
                          f(\Sigma e_{x_n})\\
                         \end{array}
                       \right) =      A_f \cdot \left(
                         \begin{array}{c}
                          1 \\
                          \Sigma e_{x_1}\\
                          \vdots\\
                          \Sigma e_{x_n}\\
                         \end{array}
                       \right).  $$
                       And $f\in  Z^0(\Hom_A(F,F)$ if and only if $\partial_{F}\circ f=f\circ \partial_{F}$, if and only if
 $$ A_f\cdot  \left(
                         \begin{array}{cccc}
                           0 & 0& \cdots & 0\\
                           x_1 & 0 & \cdots & 0\\
                           \vdots & \vdots & \ddots & \vdots\\
                          x_n & 0 & \cdots & 0 \\
                         \end{array}
                       \right)  =  \left(
                         \begin{array}{cccc}
                           0 & 0& \cdots & 0\\
                           x_1 & 0 & \cdots & 0\\
                           \vdots & \vdots & \ddots & \vdots\\
                          x_n & 0 & \cdots & 0 \\
                         \end{array}
                       \right) \cdot A_f, $$  which is also equivalent to
                       $$\begin{cases}
                       a_{ij}=0, \forall i\neq j, j=2,3,\cdots, n+1,\\
                       a_{11}=a_{22}=\cdots =a_{nn}=a_{(n+1)(n+1)}
                       \end{cases}$$
by direct computations. Hence the algebra $$ E\cong \left\{ \left(
                         \begin{array}{cccc}
                           \lambda_1 & 0& \cdots & 0\\
                           \lambda_2 & \lambda_1 & \cdots & 0\\
                           \vdots & \vdots & \ddots  & \vdots\\
                           \lambda_{n+1} & 0& \cdots & \lambda_1\\
                         \end{array}
                       \right)\quad | \quad \lambda_1,\lambda_2,\cdots,\lambda_{n+1}\in k \right\} = \mathcal{E}.$$
                       Set $$ e_1= E_{n+1}, e_2=  \left(
                         \begin{array}{cccc}
                           0 & 0& \cdots & 0\\
                           1 & 0 & \cdots & 0\\
                           \vdots & \vdots & \ddots  & \vdots\\
                           0 & 0& \cdots & 0\\
                         \end{array}
                       \right),\cdots, e_{n+1}= \left(
                         \begin{array}{cccc}
                           0 & 0& \cdots & 0\\
                           0 & 0 & \cdots & 0\\
                           \vdots & \vdots & \ddots  & \vdots\\
                           1 & 0& \cdots & 0\\
                         \end{array}
                       \right).
                       $$
                     Then $\{e_1,e_2,\cdots,e_{n+1}\}$ is a $k$-linear bases of the $k$-algebra
                        $\mathcal{E}$. The multiplication on $\mathcal{E}$ is defined by the following relations
                       $$\begin{cases} e_1\cdot e_i=e_i\cdot e_1=e_i, \forall i\in \{1,2,\cdots, n+1\}, \\
                        e_ie_j=0, \forall i,j\in \{2,3,\cdots, n+1\}.
                       \end{cases}$$
Hence $\mathcal{E}$ is a local commutative $k$-algebra isomorphic to $$\frac{k\langle y_1,y_2,\cdots, y_n\rangle}{\left(
    y_iy_j, i,j=1,2,\cdots, n \right)},|y_i|=0.$$
 Since $\{e_1,e_2,\cdots, e_{n+1}\}$ is a $k$-linear basis of $\mathcal{E}$, any $k$-linear map $\sigma: \mathcal{E}\to \mathcal{E}$ uniquely corresponds to a matrix in $C_{\sigma}=(c_{ij})_{(n+1)\times (n+1)}\in M_{n+1}(k)$
with $$\left(
                         \begin{array}{c}
                          \sigma(e_1) \\
                          \sigma(e_2)\\
                          \vdots\\
                          \sigma(e_{n+1})\\
                         \end{array}
                       \right) =      C_{\sigma} \cdot \left(
                         \begin{array}{c}
                          e_1 \\
                          e_2\\
                          \vdots\\
                          e_{n+1}\\
                         \end{array}
                       \right).  $$
                       Such $\sigma \in \mathrm{Aut}_k(\mathcal{E})$ if and only if
                      $$C_{\sigma}\in \mathrm{GL}_{n+1}(k) \quad \text{and}\quad \sigma (e_i\cdot e_j)=\sigma (e_i)\sigma (e_j), \,\,\text{for any}\,\, i,j=1,\cdots, n+1.$$
Therefore, $\sigma \in \mathrm{Aut}_k(\mathcal{E})$ if and only if
$$\begin{cases}
|(c_{ij})_{(n+1)\times (n+1)}|\neq 0,   \\
 \sigma(e_1)\sigma(e_i)=\sigma(e_i) \\
\sigma(e_i)\cdot \sigma(e_j)=0, \forall i,j\in \{2,\cdots, n+1\}
\end{cases} \Longleftrightarrow \begin{cases}
|(c_{ij})_{(n+1)\times (n+1)}|\neq 0,\\
c_{11}=1,c_{12}=\cdots=c_{1(n+1)}=0, \\
c_{21}=\cdots=c_{(n+1)1}=0
\end{cases}
$$
Then we get \begin{align*}\mathrm{Aut}_k(\mathcal{E}) &\cong  \left\{ \left(
                         \begin{array}{cccc}
                           1 & 0& \cdots& 0\\
                           0 & c_{22} & \cdots & c_{2(n+1)}\\
                           \vdots & \vdots & \ddots & \vdots\\
                           0 & c_{(n+1)2} & \cdots & c_{(n+1)(n+1)}\\
                         \end{array}
                       \right)\quad |  \left|
                         \begin{array}{ccc}
                           c_{22} & \cdots & c_{2(n+1)}\\
                            \vdots & \ddots & \vdots\\
                          c_{(n+1)2} & \cdots & c_{(n+1)(n+1)}\\
                         \end{array}
                       \right|\neq 0 \right\} \\
                       &\cong \mathrm{GL}_n(k)
                       \end{align*}
Since $\mathcal{E}$ is commutative, we have $\mathrm{Aut}_k(\mathcal{E})\cong \mathrm{Out}_k(\mathcal{E})$.
 By \cite[Proposition 3.4]{Yek1}, we have $\mathrm{Pic}_k (\mathcal{E})\cong \mathrm{Out}_k(\mathcal{E})$ and $\mathrm{DPic}_k(\mathcal{E})\cong \Bbb{Z}\times \mathrm{Pic}_k (\mathcal{E})\cong \Bbb{Z}\times \mathrm{GL}_n(k)$.
 By Theorem \ref{dpd},  we have  $$\mathrm{DPic}(A)\cong \mathrm{DPic}_k (E)\cong \mathrm{DPic}_k (\mathcal{E})\cong \Bbb{Z}\times \mathrm{GL}_n(k).$$
\end{proof}

\begin{cor}
Let $A$ be the trivial DG free algebra such that $A^{\#}=k\langle x_1,\cdots, x_n\rangle$ with $$|x_1|=|x_2|=\cdots =|x_n|=1.$$
Then  $$\mathrm{DPic}(A)\cong \Bbb{Z}\times \mathrm{GL}_n(k).$$
\end{cor}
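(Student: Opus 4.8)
The plan is to recognize this corollary as an immediate specialization of Proposition \ref{trivialdgfree}, so almost all of the work is already done. The only genuine point is to translate the phrase \emph{trivial DG free algebra} into the hypothesis of that proposition. Here \emph{trivial} signals that the differential vanishes, i.e. $\partial_A = 0$. So the first step I would record is the effect of this on the cohomology: since $\partial_A = 0$, every element of $A$ is a cocycle and no nonzero element is a coboundary, whence $H(A) = A^{\#} = k\langle x_1,\ldots,x_n\rangle$ as graded $k$-algebras.

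Next I would identify the generators as the required degree-one cocycles. Because $\partial_A^1 = 0$, each $x_i$ lies in $\mathrm{ker}(\partial_A^1)$, so it is a degree-one cocycle, and its cohomology class $\lceil x_i\rceil$ is simply its image in $H(A)$. Under the identification $H(A) = A^{\#}$ just made, these classes are precisely the free generators, so $H(A) = k\langle \lceil x_1\rceil,\ldots,\lceil x_n\rceil\rangle$. This is exactly the hypothesis imposed in Proposition \ref{trivialdgfree}, with the same number $n$ of generators.

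Having matched the hypotheses, I would simply apply Proposition \ref{trivialdgfree} to conclude $\mathrm{DPic}(A) \cong \Bbb{Z}\times \mathrm{GL}_n(k)$. I do not expect any real obstacle in this deduction: there is no separate verification of homological smoothness, Koszulness, or of the structure of the Ext-algebra to perform, since the proof of the proposition already carries out all of that content — constructing the minimal semi-free resolution of ${}_Ak$, identifying the local Ext-algebra with $k\langle y_1,\ldots,y_n\rangle/(y_iy_j)$, computing its outer automorphism group as $\mathrm{GL}_n(k)$, and invoking Theorem \ref{dpd}. The corollary is therefore the case of a genuinely vanishing differential, where the hypothesis $H(A)=k\langle \lceil x_1\rceil,\ldots,\lceil x_n\rceil\rangle$ holds for the most transparent reason.
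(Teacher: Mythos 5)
Your proposal is correct and is exactly the intended argument: the paper states this corollary without proof precisely because, for the trivial (zero-differential) DG free algebra, $H(A)=A^{\#}=k\langle \lceil x_1\rceil,\ldots,\lceil x_n\rceil\rangle$ with each $x_i\in\mathrm{ker}(\partial_A^1)$, so Proposition \ref{trivialdgfree} applies verbatim. Your explicit verification that the generators are degree-one cocycles fills in the only step the paper leaves implicit.
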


 \begin{prop}\label{trivialpoly}
 Let $A$ be a connected cochain DG algebra with $H(A)=k[\lceil x_1\rceil, \lceil x_2\rceil]$, for some degree one cocycle elements $x_1,x_2$ in $A$.
  Then
 $$ \mathrm{DPic}(A)\cong \Bbb{Z}\times [k^2\rtimes \mathrm{SL}_2(k)\rtimes k^{\times}].$$
 \end{prop}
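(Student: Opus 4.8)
The plan is to mirror the strategy of Proposition \ref{trivialdgfree}: exhibit $A$ as a homologically smooth Koszul DG algebra, identify its Ext-algebra $E$ with an explicit finite-dimensional local algebra $\mathcal{E}$, compute $\mathrm{Out}_k(\mathcal{E})$, and then feed this into Theorem \ref{dpd} together with the chain of isomorphisms $\mathrm{DPic}_k(\mathcal{E})\cong \Bbb{Z}\times \mathrm{Pic}_k(\mathcal{E})\cong \Bbb{Z}\times \mathrm{Out}_k(\mathcal{E})$ from \cite[Proposition 3.4]{Yek1} and Fr\"{o}hlich's theorem.

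First I would build a minimal semi-free resolution $F$ of ${}_Ak$. Since $\lceil x_1\rceil$ and $\lceil x_2\rceil$ generate $H(A)$ and commute, there is $s\in A^1$ with $x_1x_2-x_2x_1=\partial_A(s)$, and I take $F^{\#}=A^{\#}\otimes W$ with semi-basis $W=\langle e_0,e_1,e_2,e_{12}\rangle$ concentrated in degree $0$ and Koszul-type differential $\partial_F(e_i)=x_ie_0$ for $i=1,2$ and $\partial_F(e_{12})=x_1e_2-x_2e_1-se_0$, so that $\partial_F^2=0$. Because $W$ is finite and lies in degree $0$, this simultaneously shows that $A$ is homologically smooth and Koszul, and that $H(\Hom_A(F,k))=\Hom_A(F,k)=W^*$ is concentrated in degree $0$; hence $E=H(\Hom_A(F,F))=Z^0(\Hom_A(F,F))$ is four-dimensional. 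Computing the composition (Yoneda) product on the dual basis $\{y_1=e_1^*,\,y_2=e_2^*,\,w=e_{12}^*\}$ should then identify $E$ with the local algebra $\mathcal{E}=k\langle y_1,y_2\rangle/(y_1^2,\,y_2^2,\,y_1y_2+y_2y_1)$, the Koszul dual of $k[\lceil x_1\rceil,\lceil x_2\rceil]$; the antisymmetry $y_1y_2=-y_2y_1$ comes directly from the antisymmetric term $x_1e_2-x_2e_1$ appearing in $\partial_F$.

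The core of the proof is the determination of $\mathrm{Out}_k(\mathcal{E})$. Writing $\mathfrak{m}=(y_1,y_2)$ for the maximal ideal, every automorphism preserves the filtration $\mathcal{E}\supset\mathfrak{m}\supset\mathfrak{m}^2=\langle w\rangle\supset 0$, hence is determined by $\sigma(y_i)=\sum_j M_{ij}y_j+c_iw$. A direct check that the defining quadratic relations are stable under such a $\sigma$ should show that they impose no constraint beyond $M=(M_{ij})\in \mathrm{GL}_2(k)$, with $\sigma(w)=\det(M)\,w$ forced; this yields $\mathrm{Aut}_k(\mathcal{E})\cong k^2\rtimes \mathrm{GL}_2(k)$, where the normal copy of $k^2\cong \mathrm{Hom}(\mathfrak{m}/\mathfrak{m}^2,\mathfrak{m}^2)$ consists of the maps $y_i\mapsto y_i+c_iw$ and $\mathrm{GL}_2(k)$ is the linear part on $\langle y_1,y_2\rangle$. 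Splitting $\mathrm{GL}_2(k)\cong \mathrm{SL}_2(k)\rtimes k^{\times}$ by the determinant and passing to the quotient by $\mathrm{Inn}_k(\mathcal{E})$ is what produces the asserted group $k^2\rtimes \mathrm{SL}_2(k)\rtimes k^{\times}$. I expect the delicate point to be precisely the analysis of $\mathrm{Inn}_k(\mathcal{E})$: one must compute $\mathrm{ad}_u$ for $u=1+ay_1+by_2+cw$ and decide exactly which pairs $(M,c)$ are inner, since this is what separates the claimed $\mathrm{Out}_k(\mathcal{E})$ from the smaller group $\mathrm{GL}_2(k)$ governing the genuinely graded automorphisms.

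Assembling the pieces, $\mathrm{DPic}_k(\mathcal{E})\cong \Bbb{Z}\times \mathrm{Pic}_k(\mathcal{E})\cong \Bbb{Z}\times[k^2\rtimes \mathrm{SL}_2(k)\rtimes k^{\times}]$, and Theorem \ref{dpd} together with $E\cong\mathcal{E}$ then gives $\mathrm{DPic}(A)\cong \mathrm{DPic}(\mathcal{E})\cong \Bbb{Z}\times[k^2\rtimes \mathrm{SL}_2(k)\rtimes k^{\times}]$, as required.
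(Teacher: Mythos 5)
Your outline coincides with the paper's own proof through its first three steps: the paper builds the same minimal semi-free resolution (with semi-basis $\{1,\Sigma e_1,\Sigma e_2,\Sigma^2 e_{12}\}$ and $\partial_F(\Sigma^2 e_{12})=x_1\Sigma e_2-x_2\Sigma e_1-\chi$), identifies the Ext-algebra with the same four-dimensional local algebra $\mathcal{E}=k\langle y_1,y_2\rangle/(y_1^2,\,y_2^2,\,y_1y_2+y_2y_1)$, and obtains exactly your $\mathrm{Aut}_k(\mathcal{E})\cong k^2\rtimes\mathrm{GL}_2(k)$, presented there as $k^2\rtimes\mathrm{SL}_2(k)\rtimes k^{\times}$. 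The genuine gap is the step you defer: the determination of $\mathrm{Inn}_k(\mathcal{E})$. Note first that the target group $k^2\rtimes\mathrm{SL}_2(k)\rtimes k^{\times}$ \emph{is} all of $\mathrm{Aut}_k(\mathcal{E})$, so your claim that ``passing to the quotient by $\mathrm{Inn}_k(\mathcal{E})$ produces the asserted group'' is equivalent to asserting that $\mathrm{Inn}_k(\mathcal{E})$ is trivial. It is not, unless $\mathrm{char}\,k=2$. Writing $w=y_1y_2$, for $u=1+ay_1+by_2+cw$ one checks $u^{-1}=1-ay_1-by_2-cw$ and
\begin{align*}
\mathrm{ad}_u(y_1)&=uy_1u^{-1}=(y_1-bw)u^{-1}=y_1-2bw,\\
\mathrm{ad}_u(y_2)&=uy_2u^{-1}=(y_2+aw)u^{-1}=y_2+2aw,
\end{align*}
using $y_1^2=y_2^2=0$, $y_2y_1=-y_1y_2$ and $y_iw=wy_i=w^2=0$. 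Hence when $\mathrm{char}\,k\neq 2$ the inner automorphisms are precisely the translation subgroup $k^2=\{y_i\mapsto y_i+c_iw\}$, and
$$\mathrm{Out}_k(\mathcal{E})\cong\bigl(k^2\rtimes\mathrm{GL}_2(k)\bigr)/k^2\cong\mathrm{GL}_2(k),$$
which is a strictly smaller group, not isomorphic to $k^2\rtimes\mathrm{GL}_2(k)$. So your proposed proof cannot be completed as written.

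You are right that this is ``the delicate point,'' but it is worse than delicate: it is exactly where the paper's own proof also fails. The paper disposes of it with the sentence ``Since $e_1$ is the unique invertible element of $\mathcal{E}$, we have $\mathrm{Aut}_k(\mathcal{E})\cong\mathrm{Out}_k(\mathcal{E})$,'' which is false: $\mathcal{E}$ is local, so every element with nonzero coefficient of $e_1$ is a unit, and, unlike the commutative Ext-algebras appearing in the paper's other computations, this $\mathcal{E}$ is non-commutative when $\mathrm{char}\,k\neq 2$, so conjugation is genuinely nontrivial. Completing your argument honestly yields, via $\mathrm{DPic}(\mathcal{E})\cong\Bbb{Z}\times\mathrm{Pic}_k(\mathcal{E})\cong\Bbb{Z}\times\mathrm{Out}_k(\mathcal{E})$ and Theorem \ref{dpd}, the conclusion $\mathrm{DPic}(A)\cong\Bbb{Z}\times\mathrm{GL}_2(k)$ when $\mathrm{char}\,k\neq 2$, and the stated formula $\Bbb{Z}\times[k^2\rtimes\mathrm{SL}_2(k)\rtimes k^{\times}]$ only when $\mathrm{char}\,k=2$ (where $\mathcal{E}$ is commutative and $\mathrm{Inn}_k(\mathcal{E})$ really is trivial). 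In other words, the missing computation does not merely leave a hole in your argument; filling it refutes the proposition as stated except in characteristic $2$.
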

\begin{proof}
The trivial graded $H(A)$-module $k$ admits a minimal free resolution:
\begin{align*}
0\to H(A) \otimes ke_{12} \stackrel{d_2}{\to} H(A) \otimes (ke_{1}\oplus ke_{2}) \stackrel{d_1}{\to} H(A) \stackrel{\varepsilon_{H(A)}}{\to}  k\to 0,
\end{align*}
where $d_1$ and $d_2$ are defined by \begin{align*}
d_1(e_{1})&=\lceil x_1\rceil, \\
d_1(e_{2})&=\lceil x_2\rceil, \\
 d_2(e_{12})&=\lceil x_1 \rceil e_2- \lceil x_2\rceil e_1.
 \end{align*}
According to the constructing procedure of Eilenberg-Moore resolution, we can construct a minimal semi-free resolution of the DG $A$-module $k$ step by step.
Let $F(0)=A$ and define a morphism of DG $A$-modules $f_0:F(0)\to k$ by the augmentation map $\varepsilon_A: A\to k$. Then we extend $F(0),f_0$ to $F(1),f_1$ such that
$$F(1)^{\#}=F(0)^{\#}\oplus A^{\#}\otimes (k \Sigma e_{1}\oplus k \Sigma e_{2}),$$
$\partial_{F(1)}(\Sigma e_1)=x_1, \partial_{F(1)}(\Sigma e_2)=x_2$ and $f_1(\Sigma e_1)=f_1(\Sigma e_2)=0$.
 Let $z$  be the cocycle element in the DG free $A$-module $A\otimes (k e_{z_1}\oplus k e_{z_2})$ such that $z=x_2e_1- x_1e_2$.
 Then we have \begin{align*}
 \partial_{F(1)}(\Sigma z)&=\partial_{F(1)}[x_1\Sigma e_2- x_2\Sigma e_1] \\
                          &=-x_1x_2+x_2x_1
 \end{align*}
 Since $\lceil -x_1x_2+x_2x_1\rceil $ is zero in $H(A)$, there exist $\chi\in A^1$ such that $\partial_A(\chi)=-x_1x_2+x_2x_1$. We have
 $\partial_{F(1)}(\Sigma z -\chi)=0$.
 Define a DG $A$-module $F(2)$ such that $$F(2)^{\#}=F(1)^{\#}\oplus A^{\#}\otimes k\Sigma^2e_{12}$$ with a differential defined by $\partial_{F(2)}|_{F(1)}=\partial_{F(1)}$, and $$\partial_{F(2)}(\Sigma^2e_{12})=x_1\Sigma e_2-x_2\Sigma e_1 -\chi.$$
Since
$f_1[x_1\Sigma e_2-x_2\Sigma e_1 -\chi]=0=\partial_A(0)$,
we can extend $f_1$ to $f_2: F(2)\to k$ by $f_2(\Sigma^2e_{12})=0.$
The DG morphism $f_2:F(2)\to k$ is the Eilenberg-Moore resolution of $k$ as a DG $A$-module.
Set $F=F(2)$.
By the constructing procedure above, we have $\partial_{F}(F)\subseteq A^{\ge 1}F$ and $F$ admits a semi-basis
$$\{1,\Sigma e_1, \Sigma e_1, \Sigma^2e_{12}\}$$ satisfying $\partial_F(1)=0;  \partial_F(\Sigma e_1)=x_1, \partial_F(\Sigma e_2)=x_2;$ and $\partial_F(\Sigma^2e_{12})=x_1\Sigma e_2-x_2\Sigma e_1 -\chi$. Hence $A$ is homologically smooth and Koszul.
By the minimality of $F$, we have $$H(\Hom_A(F,k))=\Hom_A(F,k)= k\cdot 1^*\oplus k(\Sigma e_1)^*\oplus k(\Sigma e_2)^*\oplus k(\Sigma^2 e_{12})^*.$$
  So the Ext-algebra $E=H(\Hom_A(F,F))$  is concentrated in degree $0$. On the other hand, $$\Hom_A(F,F)^{\#}\cong \{k\cdot 1^*\oplus k(\Sigma e_1)^*\oplus k(\Sigma e_2)^*\oplus k(\Sigma^2 e_{12})^*\}\otimes_{k} F^{\#}$$ is concentrated in degree $\ge 0$. This implies that $E= Z^0(\Hom_A(F,F))$.
Since $F^{\#}$ is a free graded $A^{\#}$-module with a basis
$$\{1,\Sigma e_1, \Sigma e_1, \Sigma^2e_{12}\},$$ the elements in  $\Hom_A(F,F)^0$ is in one to one correspondence with the matrixes in $M_{4}(k)$. Indeed, any $f\in \Hom_A(F,F)^0$ is uniquely determined by
  a matrix $$A_f=(a_{ij})_{4\times 4}\in M_{4}(k).$$
  We have
  $$
  \left(
                         \begin{array}{c}
                          f(1) \\
                          f(\Sigma e_1)\\
                          f(\Sigma e_2)\\
                          f(\Sigma^2 e_{12})\\
                         \end{array}
                       \right) =      A_f \cdot \left(
                         \begin{array}{c}
                          1 \\
                          \Sigma e_1\\
                          \Sigma e_2\\
                          \Sigma^2 e_{12}\\
                         \end{array}
                       \right).
                       $$
 And $f\in  Z^0(\Hom_A(F,F)$ if and only if $\partial_{F}\circ f=f\circ \partial_{F}$, which is equivalent to
 $$ A_f \left(
                         \begin{array}{cccc}
                           0 & 0& 0 & 0\\
                           x_1 & 0 & 0 & 0\\
                           x_2 & 0 & 0 & 0\\
                          -\chi & -x_2 & x_1 & 0 \\
                         \end{array}
                       \right) =  \left(
                         \begin{array}{cccc}
                           0 & 0& 0 & 0\\
                           x_1 & 0 & 0 & 0\\
                           x_2 & 0 & 0 & 0\\
                          -\chi & -x_2 & x_1 & 0 \\
                         \end{array}
                       \right) A_f.$$
By computations, one has
$$
\begin{cases}
a_{ij}=0, \forall i<j,\\
a_{11}=a_{22}=a_{33}=a_{44}, \\
a_{21}=-a_{43}, a_{31}=a_{42}. \\
\end{cases}
$$
Hence the algebra $$ E\cong \{ \left(
                         \begin{array}{cccc}
                           \lambda_1 & 0& 0 & 0\\
                           \lambda_2 & \lambda_1 & 0 & 0\\
                           \lambda_3 & 0 & \lambda_1  & 0\\
                           \lambda_4 & \lambda_3 & -\lambda_2 & \lambda_1\\
                         \end{array}
                       \right)\quad | \quad \lambda_1,\lambda_2,\lambda_3,\lambda_4\in k \}=\mathcal{E}. $$
Set $$ e_1= E_{4}, e_2=  \left(
                         \begin{array}{cccc}
                           0 & 0 & 0 & 0\\
                           1 & 0 & 0 & 0\\
                           0 & 0 & 0 & 0\\
                           0 & 0 & -1 & 0\\
                         \end{array}
                       \right), e_{3}= \left(
                         \begin{array}{cccc}
                           0 & 0 & 0 & 0\\
                           0 & 0 & 0 & 0\\
                           1 & 0 & 0 & 0\\
                           0 & 1 & 0 & 0\\
                         \end{array}
                       \right), e_{4}= \left(
                         \begin{array}{cccc}
                           0 & 0 & 0 & 0\\
                           0 & 0 & 0 & 0\\
                           0 & 0 & 0 & 0\\
                           1 & 0 & 0 & 0\\
                         \end{array}
                       \right).
                       $$
                     Then $\{e_1,e_2,e_3,e_{4}\}$ are $k$-linear bases of the $k$-algebra $\mathcal{E}$. We have
                     \begin{align*}
                     \begin{cases}
                     e_1e_i=e_i, i=1,2,3,4\\
                     e_2e_3=-e_3e_2=-e_4 \\
                     (e_2)^2=(e_3)^2=(e_4)^2=0\\
                     e_2e_4=e_4e_2=e_3e_4=e_4e_3=0.
                     \end{cases}
                     \end{align*}
Any $k$-linear map $\sigma: \mathcal{E}\to \mathcal{E}$ is uniquely corresponding to a matrix in $C_{\sigma}=(c_{ij})_{4\times 4}\in M_{4}(k)$,
with $$\left(
                         \begin{array}{c}
                          \sigma(e_1) \\
                          \sigma(e_2)\\
                          \sigma(e_3)\\
                          \sigma(e_4)\\
                         \end{array}
                       \right) =   C_{\sigma}  \left(
                         \begin{array}{c}
                          e_1 \\
                          e_2\\
                          e_3\\
                          e_4\\
                         \end{array}
                       \right).$$
We have $$\sigma \in \mathrm{Aut}_k(\mathcal{E}) \Leftrightarrow
                      C_{\sigma}\in \mathrm{GL}_{4}(k), \sigma (e_i\cdot e_j)=\sigma (e_i)\sigma (e_j), \forall  i,j=1,\cdots, 4.$$
By computations,
\begin{align*}
&\sigma \in \mathrm{Aut}_k(\mathcal{E})  \Leftrightarrow     \begin{cases}
c_{11}=1,c_{1j}=c_{j1}=0,j=2,3,4 \\
c_{42}=c_{43}=0, c_{44}=c_{22}c_{33}-c_{23}c_{32}.
\end{cases}
\end{align*}
Then we get \begin{align*}\mathrm{Aut}_k(\mathcal{E}) &\cong  \left\{ \left(
                         \begin{array}{cccc}
                           1 & 0& 0& 0\\
                           0 & a & d & e\\
                           0 & c & b & f\\
                           0 & 0 & 0 & ab-cd\\
                         \end{array}
                       \right)\,\, |  a,b,c,d,e,f\in k, ab-cd\neq 0\right \}\\
                       &\cong \left\{ \left(
                         \begin{array}{ccc}
                           a & d & e\\
                           c & b & f\\
                           0 & 0 & ab-cd\\
                         \end{array}
                       \right)\,\, |  a,b,c,d,e,f\in k, ab-cd\neq 0\right \}\\
                       &\cong \left\{ \left(
                         \begin{array}{ccc}
                           a & d & e\\
                           c & b & f\\
                           0 & 0 & 1\\
                         \end{array}
                       \right)\,\, |  a,b,c,d,e,f\in k, ab-cd= 1\right \} \rtimes k^{\times}\\
                       &\cong k^2\rtimes \mathrm{SL}_2(k)\rtimes k^{\times}.
                       \end{align*}
Since $e_1$ is the unique invertible element of $\mathcal{E}$, we have
$\mathrm{Aut}_k(\mathcal{E})\cong \mathrm{Out}_k(\mathcal{E})$.
By Remark \ref{imprem}, we have $\mathrm{Pic}_k (\mathcal{E})\cong \mathrm{Out}_k(\mathcal{E})$ and $$\mathrm{DPic}_k(\mathcal{E})\cong \Bbb{Z}\times \mathrm{Pic}_k (\mathcal{E})\cong \Bbb{Z}\times [k^2\rtimes \mathrm{SL}_2(k)\rtimes k^{\times}].$$
 By Theorem \ref{dpd},  we have $$\mathrm{DPic}(A)\cong \mathrm{DPic}_k (E)\cong \mathrm{DPic}_k (\mathcal{E})\cong \Bbb{Z}\times [k^2\rtimes \mathrm{SL}_2(k)\rtimes k^{\times}].$$
\end{proof}
\begin{cor}
Let $A$ be the trivial DG free algebra such that $A^{\#}=k[x_1, x_2]$ with $|x_1|=|x_2|=1.$
Then  $$\mathrm{DPic}(A)\cong \Bbb{Z}\times [k^2\rtimes \mathrm{SL}_2(k)\rtimes k^{\times}].$$
\end{cor}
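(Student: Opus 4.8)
The plan is to obtain the statement as an immediate specialization of Proposition~\ref{trivialpoly}. The word \emph{trivial} here signals that the DG structure carries the zero differential, so the first thing I would record is that $\partial_A=0$. Consequently every homogeneous element of $A$ is a cocycle, every coboundary vanishes, and the cohomology graded algebra is simply $H(A)\cong A^{\#}=k[x_1,x_2]$. In particular $x_1,x_2\in\ker(\partial_A^1)$ are degree-one cocycles whose cohomology classes generate $H(A)$ freely as a commutative polynomial algebra, i.e. $H(A)=k[\lceil x_1\rceil,\lceil x_2\rceil]$.

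Having checked that the hypothesis of Proposition~\ref{trivialpoly} is met, I would then apply that proposition verbatim to the cocycles $x_1,x_2$ and read off $\mathrm{DPic}(A)\cong \Bbb{Z}\times[k^2\rtimes \mathrm{SL}_2(k)\rtimes k^{\times}]$. No further argument is needed, because everything substantive --- the construction of the minimal semi-free resolution of ${}_Ak$, the verification that $A$ is homologically smooth and Koszul, the identification of the Ext-algebra $E\cong\mathcal{E}$, the computation of $\mathrm{Aut}_k(\mathcal{E})\cong\mathrm{Out}_k(\mathcal{E})$, and the final appeal to Theorem~\ref{dpd} --- is carried out inside the proof of Proposition~\ref{trivialpoly} using only the shape of $H(A)$.

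Accordingly there is no real obstacle; the single point requiring attention is the bookkeeping that $H(A)$ is genuinely the polynomial algebra on two degree-one cohomology classes represented by honest cocycles in $A$, which the vanishing of $\partial_A$ makes transparent. The only thing one must not conflate is that the underlying graded algebra is the commutative polynomial algebra $k[x_1,x_2]$, so that Proposition~\ref{trivialpoly}, rather than the free-algebra Proposition~\ref{trivialdgfree}, is the relevant one to invoke.
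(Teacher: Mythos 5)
Your proposal is correct and coincides with the paper's intended argument: the corollary is stated without proof precisely because, for the trivial (zero-differential) DG algebra, $H(A)=A^{\#}=k[\lceil x_1\rceil,\lceil x_2\rceil]$ with $x_1,x_2$ degree-one cocycles, so Proposition~\ref{trivialpoly} applies verbatim. Your closing remark correctly identifies the one point of care, namely that despite the paper's wording ``DG free algebra'' the underlying graded algebra here is the commutative polynomial algebra, so Proposition~\ref{trivialpoly} rather than Proposition~\ref{trivialdgfree} is the right one to invoke.
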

By a similar proof, we can get the following two proposition.
\begin{prop}
Let $(A,\partial_A)$ be a connected cochain DG algebra such that $$H(A)= k\langle \lceil x\rceil,\lceil y\rceil \rangle/(\lceil x\rceil \lceil y\rceil+\lceil y\rceil \lceil x\rceil),$$ for some degree one cocycle elements $x,y$ in $A$.
Then $$ \mathrm{DPic}(A)\cong \Bbb{Z}\times  \left\{ \left(
                         \begin{array}{ccc}
                            a & d & e\\
                            c & b & f\\
                            0 & 0 & ab+cd\\
                         \end{array}
                       \right)\,\, |  a,b,c,d,e,f\in k, a^2b^2\neq c^2d^2, ad=cb=0 \right\} .$$
\end{prop}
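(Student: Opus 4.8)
The plan is to follow the proof of Proposition \ref{trivialpoly} verbatim in structure, replacing the commutator relation by the anticommutator $\lceil x\rceil\lceil y\rceil+\lceil y\rceil\lceil x\rceil$ and then reading off the automorphism group of the resulting $\mathrm{Ext}$-algebra. Here $H(A)=k\langle \lceil x\rceil,\lceil y\rceil\rangle/(\lceil x\rceil\lceil y\rceil+\lceil y\rceil\lceil x\rceil)$ is the $(-1)$-skew polynomial algebra, which is Koszul and regular of global dimension $2$, so first I would record the minimal graded free resolution of its trivial module,
$$0\to H(A)e_{12}\xrightarrow{d_2}H(A)e_1\oplus H(A)e_2\xrightarrow{d_1}H(A)\xrightarrow{\varepsilon}k\to 0,$$
with $d_1(e_1)=\lceil x\rceil$, $d_1(e_2)=\lceil y\rceil$ and $d_2(e_{12})=\lceil x\rceil e_2+\lceil y\rceil e_1$; the single defining relation produces a single second syzygy, and the plus sign in $d_2$ is the only difference from the polynomial case.

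Next I would lift this along the Eilenberg--Moore procedure, exactly as in Proposition \ref{trivialpoly}, to a minimal semi-free resolution $F$ of ${}_Ak$ with semi-basis $\{1,\Sigma e_1,\Sigma e_2,\Sigma^2 e_{12}\}$ concentrated in degree $0$ and differential determined by $\partial_F(\Sigma e_1)=x$, $\partial_F(\Sigma e_2)=y$ and $\partial_F(\Sigma^2 e_{12})=x\Sigma e_2+y\Sigma e_1-\chi$, where $\chi\in A^1$ is any element with $\partial_A(\chi)=-xy-yx$ (such $\chi$ exists since $\lceil xy+yx\rceil=0$ in $H(A)$). Because the semi-basis is finite and sits in degree $0$, this shows at once that $A$ is homologically smooth and Koszul, so Theorem \ref{dpd} and Remark \ref{imprem} apply and reduce the whole problem to computing $\mathrm{Out}_k(E)$ for $E=H(\Hom_A(F,F))$.

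The computational heart is the identification of $E$. By minimality $E=Z^0(\Hom_A(F,F))$ is concentrated in degree $0$, and each $f\in\Hom_A(F,F)^0$ is encoded by a matrix $A_f\in M_4(k)$ acting on the semi-basis, the cocycle condition $\partial_F\circ f=f\circ\partial_F$ becoming the identity $A_fD=DA_f$ with
$$D=\begin{pmatrix}0&0&0&0\\ x&0&0&0\\ y&0&0&0\\ -\chi&y&x&0\end{pmatrix}.$$
Comparing the coefficients of $x$, $y$ and $\chi$ entrywise forces $a_{ij}=0$ for $i<j$, $a_{11}=a_{22}=a_{33}=a_{44}$, $a_{42}=a_{31}$ and $a_{43}=a_{21}$; note the sign in the last equation is $+$ rather than $-$, the one place where this relation departs from the polynomial case. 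This identifies $E$ with the four-dimensional local algebra $\mathcal{E}$ with $k$-basis $e_1,e_2,e_3,e_4$ whose only nonzero products are $e_1e_i=e_ie_1=e_i$ and $e_2e_3=e_3e_2=e_4$. In particular $\mathcal{E}\cong k[\xi,\eta]/(\xi^2,\eta^2)$ is \emph{commutative}, so all inner automorphisms are trivial and $\mathrm{Aut}_k(\mathcal{E})=\mathrm{Out}_k(\mathcal{E})$.

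Finally I would compute $\mathrm{Aut}_k(\mathcal{E})$ directly. Writing $\sigma(e_2)=ae_2+de_3+e\,e_4$ and $\sigma(e_3)=ce_2+be_3+f\,e_4$ forces $\sigma(e_4)=\sigma(e_2)\sigma(e_3)=(ab+cd)e_4$, while the requirement that $\sigma$ preserve the relations $e_2^2=e_3^2=0$ gives $2ad\,e_4=2cb\,e_4=0$, i.e. $ad=cb=0$; bijectivity then amounts to $ab+cd\neq 0$ together with invertibility of the linear part, which under $ad=cb=0$ is exactly $a^2b^2\neq c^2d^2$ (selecting the diagonal case $ab\neq 0$ and the anti-diagonal case $cd\neq 0$). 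This yields precisely the stated matrix group, and combining $\mathrm{DPic}_k(\mathcal{E})\cong\Bbb{Z}\times\mathrm{Pic}_k(\mathcal{E})\cong\Bbb{Z}\times\mathrm{Out}_k(\mathcal{E})$ with $\mathrm{DPic}(A)\cong\mathrm{DPic}(E)\cong\mathrm{DPic}_k(\mathcal{E})$ from Theorem \ref{dpd} finishes the proof. The hard part will be the middle step: solving the cocycle equation carefully enough to pin down the \emph{commutative} multiplication on $E$ (so that $\mathrm{Inn}$ is trivial, in contrast to the exterior $\mathrm{Ext}$-algebra of the polynomial case) and then converting preservation of $e_2^2=e_3^2=0$ into the clean constraints $ad=cb=0$ and $a^2b^2\neq c^2d^2$; one must also watch the characteristic, since the equations $2ad=2cb=0$ degenerate when $\mathrm{char}\,k=2$, where the relation itself collapses to the commutative one.
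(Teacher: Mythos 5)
Your proposal is correct and is precisely the ``similar proof'' the paper alludes to (the paper omits this argument, deferring to the proof of the polynomial case): the same minimal semi-free resolution with semi-basis $\{1,\Sigma e_1,\Sigma e_2,\Sigma^2 e_{12}\}$, the same cocycle-matrix computation (with the sign change giving $a_{43}=a_{21}$ instead of $a_{43}=-a_{21}$), the identification $E\cong k[\xi,\eta]/(\xi^2,\eta^2)$, and then Theorem \ref{dpd} together with Remark \ref{imprem}. Your justification of $\mathrm{Aut}_k(\mathcal{E})=\mathrm{Out}_k(\mathcal{E})$ via commutativity of this Ext-algebra, and your caveat that the constraints $2ad=2cb=0$ degenerate in characteristic $2$, are if anything more careful than the paper's own template.
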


\begin{prop}\label{polyone}
Let $A$ be the connected cochain DG algebra such that $H(A)=k[\lceil x\rceil]$, where $x$ is a cocycle element in $A^1$. Then  $$\mathrm{DPic}(A)\cong \Bbb{Z}\times k^{\times} .$$
\end{prop}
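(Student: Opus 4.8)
The plan is to follow verbatim the template of the proof of Proposition \ref{trivialpoly}, specialised to the one-variable situation; in fact this is just the $n=1$ instance of Proposition \ref{trivialdgfree}, since $k[\lceil x\rceil]=k\langle\lceil x\rceil\rangle$ in a single generator, and $\mathrm{GL}_1(k)=k^{\times}$ already predicts the answer. First I would build a minimal semi-free resolution of ${}_Ak$ by the Eilenberg-Moore procedure, starting from the minimal free resolution of the trivial module over the graded algebra $H(A)=k[\lceil x\rceil]$. Because $k[\lceil x\rceil]$ is a polynomial ring in one degree-one variable, this resolution has length one,
$$0\to H(A)\otimes ke_x\xrightarrow{d_1}H(A)\xrightarrow{\varepsilon}k\to 0,\qquad d_1(e_x)=\lceil x\rceil.$$
Since $x$ is a cocycle, the lift requires no correction term, and one obtains a DG $A$-module $F$ with $F^{\#}=A^{\#}\oplus A^{\#}\otimes k\Sigma e_x$, $\partial_F(1)=0$ and $\partial_F(\Sigma e_x)=x$. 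The semi-basis $\{1,\Sigma e_x\}$ is finite and concentrated in degree zero, so $A$ is homologically smooth and Koszul and Theorem \ref{dpd} applies.

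Next I would read off the Ext-algebra $E=H(\Hom_A(F,F))$. By minimality of $F$ the complex $\Hom_A(F,k)$ has zero differential with basis $1^{*},(\Sigma e_x)^{*}$, so $E$ is concentrated in degree zero; since $\Hom_A(F,F)^{\#}$ sits in non-negative degrees, this gives $E=Z^0(\Hom_A(F,F))$. Writing a degree-zero endomorphism of the free graded module $F^{\#}$ as a matrix $A_f\in M_2(k)$ in the basis $\{1,\Sigma e_x\}$, the cocycle condition $\partial_F\circ f=f\circ\partial_F$ becomes commutation with the matrix $\left(\begin{smallmatrix}0&0\\x&0\end{smallmatrix}\right)$ of $\partial_F$, which forces the entry above the diagonal to vanish and the two diagonal entries to coincide. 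Hence
$$E\cong\mathcal{E}=\left\{\begin{pmatrix}\lambda_1&0\\\lambda_2&\lambda_1\end{pmatrix}\ \Big|\ \lambda_1,\lambda_2\in k\right\}\cong k[y]/(y^2),$$
the algebra of dual numbers, a commutative local $k$-algebra.

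Finally I would compute $\mathrm{Out}_k(\mathcal{E})$. A $k$-algebra automorphism of $k[y]/(y^2)$ fixes $1$ and must carry $y$ into the one-dimensional square-zero ideal $ky$, hence $y\mapsto cy$ with $c\in k^{\times}$; thus $\mathrm{Aut}_k(\mathcal{E})\cong k^{\times}=\mathrm{GL}_1(k)$. As $\mathcal{E}$ is commutative, all inner automorphisms are trivial and $\mathrm{Aut}_k(\mathcal{E})\cong\mathrm{Out}_k(\mathcal{E})$. By Remark \ref{imprem} we then have $\mathrm{Pic}_k(\mathcal{E})\cong\mathrm{Out}_k(\mathcal{E})\cong k^{\times}$ and $\mathrm{DPic}_k(\mathcal{E})\cong\Bbb{Z}\times k^{\times}$, so Theorem \ref{dpd} yields $\mathrm{DPic}(A)\cong\mathrm{DPic}_k(E)\cong\Bbb{Z}\times k^{\times}$. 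There is essentially no obstacle here: the entire content has been front-loaded into Theorem \ref{dpd}, and the one-variable case is the gentlest of the four because the single cocycle generator makes the Eilenberg-Moore resolution collapse to length one with no higher corrections. The only step deserving attention is confirming that $E$ is concentrated in degree zero (so that $E=Z^0(\Hom_A(F,F))$ and the matrix computation suffices), which is guaranteed by minimality together with Lemma \ref{local}.
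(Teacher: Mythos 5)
Your proposal is correct and follows essentially the same route as the paper: the paper omits the proof of Proposition \ref{polyone}, stating only that it is ``similar'' to the detailed computations for cases (2) and (3), and your argument (minimal semi-free resolution with semi-basis $\{1,\Sigma e_x\}$, identification of $E$ with the dual numbers $k[y]/(y^2)$, then $\mathrm{Aut}_k(\mathcal{E})\cong k^{\times}$ fed into Remark \ref{imprem} and Theorem \ref{dpd}) is exactly that template carried out. Your opening observation that this is literally the $n=1$ instance of Proposition \ref{trivialdgfree}, since $k[\lceil x\rceil]=k\langle\lceil x\rceil\rangle$ and $\mathrm{GL}_1(k)=k^{\times}$, is a valid shortcut that would even let you cite that proposition and stop there.
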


\begin{cor}\label{impcase}
Let $A$ be the DG free algebra such that $A^{\#}=\k\langle x_1,x_2\rangle, |x_1|=|x_2|=1$, and  $\partial_{A}$ is defined by
$\partial_{A}(x_1)=x_1^2$ and $\partial_{A}(x_2)=0.$ Then we have $$\mathrm{DPic}(A)\cong \Bbb{Z}\times k^{\times}.$$
\end{cor}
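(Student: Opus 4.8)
The plan is to identify the cohomology algebra $H(A)$ explicitly and then invoke Proposition \ref{polyone}. First one checks that $\partial_A$ is a genuine differential: $\partial_A^2(x_1)=\partial_A(x_1^2)=x_1^2x_1-x_1x_1^2=0$ and $\partial_A^2(x_2)=0$, so $\partial_A^2=0$ on generators and hence everywhere. The goal is then to prove that $H(A)=k[\lceil x_2\rceil]$, the polynomial algebra on the degree one cocycle $x_2$; once this is in hand, Proposition \ref{polyone} gives $\mathrm{DPic}(A)\cong\Bbb{Z}\times k^{\times}$ at once, since $x_2\in\mathrm{ker}(\partial_A^1)$.

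The key observation is that $\partial_A$ preserves the number of occurrences of $x_2$ in a word: applying the Leibniz rule to a monomial in $x_1,x_2$ only ever replaces a single letter $x_1$ by $x_1^2$ (because $\partial_A(x_2)=0$), leaving all the $x_2$'s in place. Hence $(A,\partial_A)$ splits as a direct sum of subcomplexes $A=\bigoplus_{p\ge 0}A_{(p)}$, where $A_{(p)}$ is spanned by the words containing exactly $p$ copies of $x_2$. Each such word has the normal form $x_1^{a_0}x_2x_1^{a_1}x_2\cdots x_2x_1^{a_p}$ with $a_0,\dots,a_p\ge 0$, so concatenation yields an isomorphism of complexes
\begin{equation*}
A_{(p)}\cong k[x_1]\otimes kx_2\otimes k[x_1]\otimes\cdots\otimes kx_2\otimes k[x_1],
\end{equation*}
with $p+1$ tensor factors $k[x_1]$ (each carrying the differential $\partial_A(x_1)=x_1^2$) separated by $p$ copies of the one-dimensional complex $kx_2$ concentrated in degree $1$ with zero differential. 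The point requiring care is the sign bookkeeping: the Koszul signs produced when $\partial_A$ is moved past the degree one letters $x_2$ are precisely those of the tensor product differential, which is exactly what makes this concatenation map a chain isomorphism.

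Next I would compute $H(k[x_1])$ for the differential $\partial_A(x_1)=x_1^2$. A direct Leibniz computation gives $\partial_A(x_1^n)=x_1^{n+1}$ for $n$ odd and $\partial_A(x_1^n)=0$ for $n$ even, so $(k[x_1],\partial_A)$ is acyclic with $H(k[x_1])=k$ concentrated in degree $0$. Since $k$ is a field, the K\"unneth theorem applied to the decomposition above gives
\begin{equation*}
H(A_{(p)})\cong H(k[x_1])^{\otimes(p+1)}\otimes H(kx_2)^{\otimes p}\cong (k\lceil x_2\rceil)^{\otimes p},
\end{equation*}
which is one-dimensional, concentrated in cohomological degree $p$, and spanned by the class $\lceil x_2^p\rceil$ of the word $x_2^p$ (the summand with all $a_i=0$). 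Summing over $p$ and tracking the multiplication, so that $\lceil x_2^p\rceil\cdot\lceil x_2^q\rceil=\lceil x_2^{p+q}\rceil\neq 0$, identifies $H(A)$ with the polynomial algebra $k[\lceil x_2\rceil]$ on the single degree one cocycle $x_2$.

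Finally, with $H(A)=k[\lceil x_2\rceil]$ and $x_2\in\mathrm{ker}(\partial_A^1)$, Proposition \ref{polyone} applies verbatim and yields $\mathrm{DPic}(A)\cong\Bbb{Z}\times k^{\times}$. I expect the main obstacle to be the second step, namely confirming that the $x_2$-grading splitting together with the Koszul signs really produces the stated isomorphism of complexes $A_{(p)}\cong k[x_1]^{\otimes(p+1)}\otimes(kx_2)^{\otimes p}$; everything after that is a routine acyclicity-plus-K\"unneth computation feeding into the already established Proposition \ref{polyone}.
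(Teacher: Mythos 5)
Your proposal is correct, and it follows the same overall reduction as the paper: establish $H(A)=k[\lceil x_2\rceil]$ and then invoke Proposition \ref{polyone}. The difference is entirely in how that cohomology identity is obtained. The paper's proof is a one-liner that cites \cite[Proposition 5.1(5)]{MXYA} for $H(A)=k[\lceil x_2\rceil]$, whereas you prove it from scratch: the differential preserves the number of occurrences of $x_2$ (since $\partial_A(x_2)=0$ and $\partial_A$ only replaces a single $x_1$ by $x_1^2$), so $A$ splits as a direct sum of subcomplexes $A_{(p)}$ indexed by that count; concatenation identifies $A_{(p)}$ with the tensor product of $p+1$ copies of $(k[x_1],\partial(x_1)=x_1^2)$ interleaved with $p$ copies of $kx_2$, the Koszul signs of the tensor-product differential matching exactly those produced by the Leibniz rule; the complex $(k[x_1],\partial(x_1)=x_1^2)$ has $\partial(x_1^n)=x_1^{n+1}$ for odd $n$ and $0$ for even $n$, hence $H(k[x_1])=k$; and K\"unneth over the field $k$ then gives that $H(A_{(p)})$ is one-dimensional, spanned by $\lceil x_2^p\rceil$ in degree $p$, with $\lceil x_2^p\rceil\cdot\lceil x_2^q\rceil=\lceil x_2^{p+q}\rceil$, so $H(A)=k[\lceil x_2\rceil]$. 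Each of these steps is sound, and the hypothesis of Proposition \ref{polyone} (a connected cochain DG algebra with $H(A)=k[\lceil x\rceil]$ for a degree-one cocycle $x$) is then met verbatim; note you do not need the paper's intermediate remark that $A$ is Koszul Calabi--Yau, since Proposition \ref{polyone} only requires the form of $H(A)$. What the paper's route buys is brevity at the cost of an external dependence; what your route buys is a self-contained, elementary verification of the cited computation, which is an addition to, rather than a deviation from, the paper's argument.
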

\begin{proof}
By \cite[Proposition 5.1(5)]{MXYA}, $H(A)=k[\lceil x_2\rceil]$. So $A$ is a Koszul Calabi-Yau DG algebra, and $$\mathrm{DPic}(A)\cong \Bbb{Z}\times k^{\times}$$ by Proposition \ref{polyone}.
\end{proof}

\begin{cor}\label{impcase}
Let $A$ be the DG free algebra such that $A^{\#}=\k\langle x_1,x_2\rangle, |x_1|=|x_2|=1$, and  $\partial_{A}$ is defined by
$\partial_{A}(x_1)=x_2^2$ and $\partial_{A}(x_2)=x_2^2.$ Then we have $$\mathrm{DPic}(A)\cong \Bbb{Z}\times k^{\times}.$$
\end{cor}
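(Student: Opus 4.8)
The plan is to reduce the computation to Proposition \ref{polyone} by first rewriting $A$ with a more convenient free generating set. Although neither generator is a cocycle, their difference is: from $\partial_A(x_1)=\partial_A(x_2)=x_2^2$ we get $\partial_A(x_1-x_2)=0$. Since $\{x_1-x_2,\,x_2\}$ is again a basis of the degree-one part $A^1=kx_1\oplus kx_2$, it is another free generating set of the tensor algebra $A^{\#}=k\langle x_1,x_2\rangle$. Writing $u=x_1-x_2$ and $v=x_2$, the same differential $\partial_A$ is now described by $\partial_A(u)=0$ and $\partial_A(v)=v^2$, so that $A$ is presented as the DG free algebra on $u,v$ with $u$ a genuine degree-one cocycle and the whole of the differential concentrated in the $v$-direction.

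The next step is to compute $H(A)$ in this presentation. This is the mirror image of the preceding corollary (where $\partial_A(x_1)=x_1^2$, $\partial_A(x_2)=0$ gives $H(A)=k[\lceil x_2\rceil]$), with the two generators interchanged, and I expect the analogous conclusion $H(A)=k[\lceil u\rceil]=k[\lceil x_1-x_2\rceil]$, a polynomial algebra on the single degree-one class $\lceil u\rceil$. The mechanism is that the $v$-direction is cohomologically trivial: a short induction gives $\partial_A(v^n)=v^{n+1}$ for $n$ odd and $\partial_A(v^n)=0$ for $n$ even, so the subalgebra $k\langle v\rangle$ is acyclic, and a degree-by-degree bookkeeping (or an appeal to the appropriate item of \cite[Proposition 5.1]{MXYA}) shows that the coboundaries annihilate every monomial in which $v$ occurs nontrivially, leaving exactly the classes $\lceil u\rceil^n$.

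Granting $H(A)=k[\lceil x_1-x_2\rceil]$ with $x_1-x_2\in\mathrm{ker}(\partial_A^1)$, the DG algebra $A$ lands in case (1), and Proposition \ref{polyone} applies verbatim to give $\mathrm{DPic}(A)\cong\Bbb{Z}\times k^{\times}$. The one genuinely nonroutine step is the cohomology computation, since $A^{\#}$ is free and the Leibniz rule spreads the differential over many noncommutative monomials; the change of generators in the first step is precisely what tames this, by isolating a single cocycle generator $u$ and confining the nontrivial differential to $v$.
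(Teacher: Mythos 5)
Your proof is correct, and its endgame coincides with the paper's: once $H(A)=k[\lceil x_1-x_2\rceil]$ with $x_1-x_2\in\mathrm{ker}(\partial_A^1)$ is known, both you and the paper conclude by Proposition \ref{polyone}. Where you genuinely differ is in how that cohomology identification is obtained. The paper's entire proof is a citation: $H(A)=k[\lceil x_1-x_2\rceil]$ holds by \cite[Proposition 5.1(8)]{MXYA}, hence the result. You instead note that $u=x_1-x_2$, $v=x_2$ is another basis of $A^1$ and therefore another free generating set of the tensor algebra $A^{\#}$, in which the differential reads $\partial_A(u)=0$, $\partial_A(v)=v^2$; after swapping the names of the two generators this is literally the DG algebra of the preceding corollary, so its cohomology $k[\lceil u\rceil]$ follows from that corollary (equivalently from \cite[Proposition 5.1(5)]{MXYA}) with no further citation. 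This reduction is sound: the linear change of generators is an isomorphism of DG algebras, and such an isomorphism identifies the cohomologies together with the property of being a polynomial algebra on a degree-one cocycle class. It is also the same device the paper itself deploys in Section \ref{special}, where a DG algebra with $\partial_A(x_1)=\partial_A(x_2)=-x_1^2+x_1x_2+x_2x_1-x_2^2$ is handled via a DG isomorphism cited from \cite[Proposition 4.5(2)]{MXYA}; in your case the isomorphism is elementary enough to verify by hand. What your route buys is self-containedness and an explanation of why the answer holds; what the paper's buys is brevity. One caution: your ``mechanism'' paragraph (that $k\langle v\rangle$ is acyclic and that coboundaries ``annihilate every monomial in which $v$ occurs nontrivially'') is only a sketch, and as phrased it is not quite the right assertion --- the precise statement would be that in the free-product decomposition of $k\langle u\rangle \ast k\langle v\rangle$ every alternating word containing a $\bar{B}$-factor (where $B=k\langle v\rangle$) has vanishing cohomology since $H(B)=k$. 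But nothing in your argument depends on this sketch, since the change-of-generators reduction already settles $H(A)$ rigorously.
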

\begin{proof}
By \cite[Proposition 5.1(8)]{MXYA}, $H(A)=k[\lceil x_1 - x_2\rceil]$. So $A$ is a Koszul Calabi-Yau DG algebra, and $$\mathrm{DPic}(A)\cong \Bbb{Z}\times k^{\times}$$ by Proposition \ref{polyone}.
\end{proof}

\section{applications to some other special cases}\label{special}
By \cite[Example 2.11]{MW2}, \cite[Proposition 6.5]{MXYA} and \cite[Proposition 6.1]{MHLX}, one sees that there are homologically smooth Koszul connected cochain DG algebras, whose cohomology graded algebras are not Koszul and regular. For those special cases, we will apply Theorem \ref{dpd} to compute their derived Picard groups in this section.

\begin{exa}\label{dgfree}\cite{MXYA}
Let $A$ be the DG free algebra such that $A^{\#}=\k\langle x_1,x_2\rangle, |x_1|=|x_2|=1$, and  $\partial_{A}$ is defined by
$\partial_{A}(x_1)=x_2^2$ and $\partial_{A}(x_2)=0.$ We have $H(A)=k[\lceil x_2\rceil,\lceil x_1x_2+x_2x_1\rceil]/(\lceil x_2\rceil^2)$
by \cite[Proposition 5.1(6)]{MXYA}. One sees that $H(A)$ is not Koszul and $\mathrm{gl.dim}H(A)=\infty$. In spite of this, $A$ is a Koszul Calabi-Yau DG algebra
by \cite[Proposition 6.3]{MXYA}. We can still apply Theorem \ref{dpd} to compute its derived Picard group.
\end{exa}

\begin{prop}\label{impcase}
Let $A$ be the DG free algebra in Example \ref{dgfree}. Then we have $$\mathrm{DPic}(A)\cong  [k \rtimes \mathrm{SL}_1(k)]\rtimes k^{\times}.
.$$
\end{prop}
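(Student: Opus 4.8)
The plan is to follow the strategy of Propositions \ref{trivialpoly} and \ref{trivialdgfree}: exhibit an explicit minimal semi-free resolution of ${}_Ak$, read off the finite dimensional local Ext-algebra $E=H(\mathrm{RHom}_A(k,k))$, compute $\mathrm{Out}_k(E)$, and then apply Theorem \ref{dpd}. By Example \ref{dgfree} the algebra $A$ is homologically smooth and Koszul, so Lemma \ref{local} already tells us that $E$ is a finite dimensional local $k$-algebra concentrated in degree $0$; by Theorem \ref{dpd} together with Remark \ref{imprem} the whole problem reduces to identifying $E$ and its outer automorphism group.

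The first and most delicate step is the construction of the resolution, where the novelty compared with the trivial cases is that $x_1$ is \emph{not} a cocycle, since $\partial_A(x_1)=x_2^2$. Starting from $F(0)=A$, I would adjoin a degree $0$ generator $T_1$ with $\partial T_1=x_2$ (legitimate because $x_2$ is a cocycle), killing the class $\lceil x_2\rceil$. A direct computation of cocycles then shows that the first cohomology of $A\oplus AT_1$ is one dimensional, represented by $x_1+x_2T_1$; indeed $\partial(x_2T_1)=-x_2^2$, so $\partial(x_1+x_2T_1)=x_2^2-x_2^2=0$. I would therefore adjoin a second degree $0$ generator $T_2$ with $\partial T_2=x_1+x_2T_1$, which again satisfies $\partial^2=0$. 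A long exact sequence argument, with connecting homomorphism given by multiplication by the class $\lceil x_1+x_2T_1\rceil$, then shows that the cohomology collapses to $k$. Thus $F=A\otimes(k\cdot 1\oplus kT_1\oplus kT_2)$ is a minimal semi-free resolution with semi-basis concentrated in degree $0$, reconfirming that $A$ is homologically smooth and Koszul and showing $\dim_kE=3$.

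Next I would identify $E$. Because $F^{\#}$ is free over $A^{\#}$ on $\{1,T_1,T_2\}$ in degree $0$ and $\mathrm{Hom}_A(F,F)$ vanishes in negative degrees, one has $E=Z^0(\mathrm{Hom}_A(F,F))$, which I would present via the matrix of a degree $0$ chain map in the basis $\{1,T_1,T_2\}$. Imposing $\partial_F\circ f=f\circ\partial_F$ forces the matrix to be lower triangular and Toeplitz, i.e. of the form $aI+bN+cN^2$ with $N$ the $3\times 3$ nilpotent Jordan block, and composition is exactly the multiplication of $k[s]/(s^3)$. Hence $E\cong k[s]/(s^3)$, a commutative local algebra of dimension $3$ (Frobenius, as expected from the Calabi--Yau property). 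An automorphism of $E$ is determined by $s\mapsto as+bs^2$ with $a\in k^{\times}$, $b\in k$, with group law $(a,b)\cdot(a',b')=(aa',\,a'b+a^2b')$; therefore $\mathrm{Aut}_k(E)\cong k\rtimes k^{\times}$, and since $E$ is commutative all inner automorphisms are trivial, so $\mathrm{Out}_k(E)=\mathrm{Aut}_k(E)\cong[k\rtimes\mathrm{SL}_1(k)]\rtimes k^{\times}$ (here $\mathrm{SL}_1(k)$ is the trivial group).

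Finally, Theorem \ref{dpd} gives $\mathrm{DPic}(A)\cong\mathrm{DPic}(E)$, and together with the description of the derived Picard group of the finite dimensional local algebra $E$ through $\mathrm{Out}_k(E)$ recorded in Remark \ref{imprem}, the computation above yields $\mathrm{DPic}(A)\cong[k\rtimes\mathrm{SL}_1(k)]\rtimes k^{\times}$. I expect the main obstacle to be the resolution step: since $A$ is not formal, one must pinpoint the second generating cohomology class as $\lceil x_1+x_2T_1\rceil$ and verify through the long exact sequence that adjoining $T_2$ with precisely this differential makes the cohomology collapse to $k$. Getting $\partial T_2$ exactly right is what forces $E$ to be three dimensional and produces the Toeplitz structure that in turn pins down $\mathrm{Out}_k(E)$.
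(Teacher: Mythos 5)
Your proposal is correct and takes essentially the same route as the paper: the paper merely cites the proof of \cite[Proposition 6.3]{MXYA} for the minimal semi-free resolution, which is exactly the $F$ you construct by hand (semi-basis $\{1,\Sigma e_{x_2},\Sigma e_z\}$ in degree $0$ with $\partial_F(\Sigma e_{x_2})=x_2$ and $\partial_F(\Sigma e_z)=x_1+x_2\Sigma e_{x_2}$, your $T_1,T_2$), and from there the identification $E=Z^0(\Hom_A(F,F))\cong k[s]/(s^3)$ via lower-triangular Toeplitz matrices, the computation $\mathrm{Out}_k(E)=\mathrm{Aut}_k(E)\cong k\rtimes k^{\times}$, and the appeal to Theorem \ref{dpd} and Remark \ref{imprem} all coincide with the paper's argument. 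One caveat: Remark \ref{imprem} gives $\mathrm{DPic}(E)\cong \mathbb{Z}\times \mathrm{Pic}_k(E)$, so the correct conclusion is $\mathrm{DPic}(A)\cong \mathbb{Z}\times [k\rtimes \mathrm{SL}_1(k)]\rtimes k^{\times}$ — the paper's own proof ends with this $\mathbb{Z}$ factor, which is missing from the proposition's statement (a typo there) and consequently from your final line, even though your own invocation of Remark \ref{imprem} produces it.
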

\begin{proof}
 From the proof of \cite[Proposition 6.3]{MXYA},  one sees that the trivial DG $A$-module $k$ admits a minimal semi-free resolution $f:F\to k$ with $$F^{\#}=A^{\#}\oplus A^{\#}\Sigma e_{x_2}\oplus A^{\#}\Sigma e_z$$ and a differential $\partial_F$ defined by $\partial_F(\Sigma e_{x_2})=x_2$ and $\partial_F(\Sigma e_z)=x_1+x_2\Sigma e_{x_2}$. By the minimality of $F$, we have $$H(\Hom_A(F,k))=\Hom_A(F,k)= k\cdot 1^*\oplus k\cdot(\Sigma e_{x_2})^*\oplus k \cdot(\Sigma e_z)^*.$$
  So the Ext-algebra $E=H(\Hom_A(F,F))$  is concentrated in degree $0$. On the other hand, $$\Hom_A(F,F)^{\#}\cong [k \cdot 1^*\oplus k \cdot (\Sigma e_{x_2})^*\oplus k \cdot (\Sigma e_z)^*]\otimes_{k} F^{\#}$$ is concentrated in degree $\ge 0$. This implies that $E= Z^0(\Hom_A(F,F))$.
Since $F^{\#}$ is a free graded $A^{\#}$-module with a basis
$\{1,\Sigma e_{x_2},\Sigma e_z\}$ concentrated in degree $0$,
  the elements in  $\Hom_A(F,F)^0$ is in one to one correspondence with the matrixes in $M_3(k)$. Indeed, any $f\in \Hom_A(F,F)^0$ is uniquely determined by
  a matrix $A_f=(a_{ij})_{3\times 3}\in M_3(\k)$ with
$$\left(
                         \begin{array}{c}
                          f(1) \\
                          f(\Sigma e_{x_2})\\
                          f(\Sigma e_z)\\
                         \end{array}
                       \right) =      A_f \cdot \left(
                         \begin{array}{c}
                          1 \\
                          \Sigma e_{x_2}\\
                          \Sigma e_z\\
                         \end{array}
                       \right).  $$
                       And $f\in  Z^0(\Hom_A(F,F))$ if and only if $\partial_{F}\circ f=f\circ \partial_{F}$, if and only if
 $$ A_f\cdot \left(
                         \begin{array}{ccc}
                           0 & 0& 0 \\
                           x_2 & 0 & 0 \\
                           x_1 & x_2 & 0 \\
                         \end{array}
                       \right) =  \left(
                         \begin{array}{ccc}
                           0 & 0& 0 \\
                           x_2 & 0 & 0\\
                           x_1 & x_2 & 0 \\
                         \end{array}
                       \right) \cdot A_f, $$  which is also equivalent to
                       $$\begin{cases}
                       a_{12}=a_{13}=a_{23}=0\\
                       a_{11}=a_{22}=a_{33}\\
                       a_{21}=a_{32}
                       \end{cases}$$
by direct computations.
Hence  Hence the algebra $$ E\cong \left\{ \left(
                         \begin{array}{ccc}
                           a & 0& 0\\
                           b & a & 0 \\
                           c & b & a \\
                         \end{array}
                       \right)\quad | \quad a,b,c\in k \right\} = \mathcal{E}.$$
                       Set \begin{align*} e_1= \left(
                         \begin{array}{ccc}
                           1 & 0& 0\\
                           0 & 1 & 0 \\
                           0 & 0 & 1 \\
                         \end{array}
                       \right),& e_2= \left(
                         \begin{array}{ccc}
                           0 & 0& 0\\
                           1 & 0 & 0 \\
                           0 & 1 & 0 \\
                         \end{array}
                       \right),
                        e_3= \left(
                         \begin{array}{ccc}
                           0 & 0& 0\\
                           0 & 0 & 0 \\
                           1 & 0 & 0 \\
                         \end{array}
                       \right).
                       \end{align*}
 Then $\{e_1,e_2,e_3\}$ is a $k$-linear bases of the $k$-algebra
                        $\mathcal{E}$. The multiplication on $\mathcal{E}$ is defined by the following relations
                       $$\begin{cases} e_1\cdot e_i=e_i\cdot e_1=e_i, i=1,2,3 \\
                        e_2^2=e_3, e_2\cdot e_3=e_3\cdot e_2 =0, \\
                       e_3^2=0
                       \end{cases} .$$
Hence $\mathcal{E}$ is a local commutative $k$-algebra isomorphic to $k[x]/(x^3)$.
 Since $\{e_1,e_2,e_3\}$ is a $k$-linear basis of $\mathcal{E}$, any $k$-linear map $\sigma: \mathcal{E}\to \mathcal{E}$ uniquely corresponds to a matrix in $C_{\sigma}=(c_{ij})_{4\times 4}\in M_3(\k)$
with $$\left(
                         \begin{array}{c}
                          \sigma(e_1) \\
                          \sigma(e_2)\\
                          \sigma(e_3)\\
                         \end{array}
                       \right) =      C_{\sigma} \cdot \left(
                         \begin{array}{c}
                          e_1 \\
                          e_2\\
                          e_3\\
                         \end{array}
                       \right).  $$
                       Such $\sigma \in \mathrm{Aut}_k(\mathcal{E})$ if and only if
                      $$C_{\sigma}\in \mathrm{GL}_3(\k) \quad \text{and}\quad \sigma (e_i\cdot e_j)=\sigma (e_i)\sigma (e_j), \,\,\text{for any}\,\, i,j=1,2,3.$$
Therefore, $\sigma \in \mathrm{Aut}_k(\mathcal{E})$ if and only if
$$\begin{cases}
|(c_{ij})_{3\times 3}|\neq 0, \sigma(e_1)=e_1  \\
 [\sigma(e_2)]^2=\sigma(e_3), [\sigma(e_3)]^2=0\\
\sigma(e_2)\cdot \sigma(e_3)=\sigma(e_3)\cdot\sigma(e_2)=0
\end{cases} \Longleftrightarrow \begin{cases}
c_{22}\neq 0, c_{11}=1, c_{12}=c_{13}=0 \\
c_{21}=c_{31}=c_{32}=0\\
c_{33}=c_{22}^2.
\end{cases}
$$
Then we get \begin{align*}\mathrm{Aut}_k(\mathcal{E}) &\cong  \left \{ \left(
                         \begin{array}{ccc}
                           1 & 0& 0\\
                           0 & a & b \\
                           0 & 0 & a^2\\
                         \end{array}
                       \right)\quad | \quad a\in k^{\times}, b\in \k \right\}\\
                       &\cong \left \{ \left(
                         \begin{array}{cc}
                            a & b \\
                            0 & a^2\\
                         \end{array}
                       \right)\quad | \quad a\in k^{\times}, b\in \k \right\}\\
                       &\cong \left \{ \left(
                         \begin{array}{cc}
                            1 & b \\
                            0 & 1\\
                         \end{array}
                       \right)\quad | \quad b\in \k \right\} \rtimes k^{\times}\\
                       &\cong [k \rtimes \mathrm{SL}_1(k)]\rtimes k^{\times}.
                       \end{align*}
Since $\mathcal{E}$ is commutative, we have $\mathrm{Aut}_k(\mathcal{E})\cong \mathrm{Out}_k(\mathcal{E})$.
 By Remark \ref{imprem}, we have $\mathrm{Pic}_k (\mathcal{E})\cong \mathrm{Out}_k(\mathcal{E})$ and $\mathrm{DPic}_k(\mathcal{E})=\Bbb{Z}\times \mathrm{Pic}_k (\mathcal{E})$. So
$$\mathrm{Pic}_k (E)\cong \mathrm{Pic}_k (\mathcal{E})\cong  [k \rtimes \mathrm{SL}_1(k)]\rtimes k^{\times}$$
and  Theorem \ref{dpd} implies   $$\mathrm{DPic}(A)\cong \mathrm{DPic}_k (E)\cong \Bbb{Z}\times [k \rtimes \mathrm{SL}_1(k)]\rtimes k^{\times}.$$

\end{proof}

\begin{cor}
Let $A$ be the DG free algebra such that $\mathcal{A}^{\#}=\k\langle x_1,x_2\rangle, |x_1|=|x_2|=1$, and the differential $\partial_{A}$ is defined by
$$\partial_{A}(x_1)=-x_1^2+x_1x_2+x_2x_1-x_2^2=\partial_{A}(x_2).$$ Then we have $$\mathrm{DPic}(A)\cong \Bbb{Z}\times [k \rtimes \mathrm{SL}_1(k)]\rtimes k^{\times}.$$

\end{cor}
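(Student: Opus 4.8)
The plan is to avoid any fresh cohomology or Ext-algebra computation by observing that $A$ is isomorphic, \emph{as a cochain DG algebra}, to the DG free algebra of Example \ref{dgfree}, whose derived Picard group was already determined in Proposition \ref{impcase}. A DG algebra isomorphism $A\cong A'$ induces an isomorphism $A^e\cong (A')^e$ and hence an equivalence $\mathscr{D}(A^e)\simeq\mathscr{D}((A')^e)$ carrying tilting DG bimodules to tilting DG bimodules, so $\mathrm{DPic}(A)\cong\mathrm{DPic}(A')$; the desired value is then read off directly from Proposition \ref{impcase}.

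First I would produce the isomorphism by a linear change of the degree-one generators. Because $\partial_A(x_1)=\partial_A(x_2)$, the element $w:=x_1-x_2$ is a cocycle, $\partial_A(w)=0$. Substituting $x_1=w+x_2$ into $\partial_A(x_2)=-x_1^2+x_1x_2+x_2x_1-x_2^2$, every term involving $x_2$ cancels and one is left with $\partial_A(x_2)=-w^2$. Hence, on the free generators $\{w,x_2\}$, the differential reads $\partial_A(w)=0$ and $\partial_A(x_2)=-w^2$. The assignment $x_1'\mapsto -x_2,\ x_2'\mapsto w$ then carries the differential $\partial(x_1')=(x_2')^2,\ \partial(x_2')=0$ of Example \ref{dgfree} onto $\partial_A$, since $\partial_A(-x_2)=w^2$ and $\partial_A(w)=0$; as its matrix on the degree-one part is invertible, this assignment extends to a DG algebra isomorphism.

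Having this isomorphism, $H(A)$ coincides with the cohomology computed in Example \ref{dgfree}, so $A$ is a homologically smooth Koszul (indeed Calabi-Yau) DG algebra, and Proposition \ref{impcase} yields $\mathrm{DPic}(A)\cong\Bbb{Z}\times[k\rtimes\mathrm{SL}_1(k)]\rtimes k^{\times}$. The hard part will be nothing conceptual but rather the sign-sensitive bookkeeping in verifying that the substitution $x_1=w+x_2$ truly annihilates all the $x_2$-terms and produces exactly $-w^2$; once that single identity is confirmed, the isomorphism with Example \ref{dgfree} is automatic and no further analysis of tilting DG modules is required.
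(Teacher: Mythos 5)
Your proposal is correct and follows essentially the same route as the paper: both reduce the corollary to Proposition \ref{impcase} by exhibiting a DG algebra isomorphism between $A$ and the algebra $A'$ of Example \ref{dgfree} (with $\partial_{A'}(x)=y^2$, $\partial_{A'}(y)=0$) and then reading off $\mathrm{DPic}(A)\cong \Bbb{Z}\times [k \rtimes \mathrm{SL}_1(k)]\rtimes k^{\times}$. The only difference is that the paper quotes \cite[Proposition 4.5(2)]{MXYA} for the isomorphism $A\cong A'$, whereas you construct it explicitly via $x_1'\mapsto -x_2$, $x_2'\mapsto x_1-x_2$, and your sign computation $\partial_A(x_2)=-(x_1-x_2)^2$ is indeed correct, so the argument is complete.
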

\begin{proof}
Let $A'$ be the connected DG algebra such that $A'^{\#}=k\langle x,y\rangle$ and its differential is defined by $\partial_{A'}(x)=y^2$ and $\partial_{A'}(y)=0$. By Proposition \ref{impcase}, we have $$\mathrm{DPic}(A')\cong  \Bbb{Z}\times [k \rtimes \mathrm{SL}_1(k)]\rtimes k^{\times}.$$
On the other hand,  $A\cong A'$ by \cite[Proposition 4.5(2)]{MXYA}. So  $$\mathrm{DPic}(A)\cong \Bbb{Z}\times [k \rtimes \mathrm{SL}_1(k)]\rtimes k^{\times}.$$

\end{proof}

\begin{exa}\label{exmw}\cite[Example2.11 ]{MW2}
Let $(A,\partial_A)$ be a connected cochain DG algebra such that $A^{\#} = k\langle x,y\rangle/(xy+yx),  |x| = |y| =1,$
with $\partial_A(x) = y^2, \partial_A(y) = 0$. By \cite{MW2}, we have $H(A)=k[\lceil x\rceil^2,\lceil y\rceil]/(\lceil y\rceil^2)$. One sees that $H(A)$ is not Koszul and $\mathrm{gl.dim}H(A)=\infty$. In spite of this, $A$ is a Koszul Calabi-Yau DG algebra
by \cite[Example 7.1]{HM}. We can still apply Theorem \ref{dpd} to compute its derived Picard group.

\end{exa}

\begin{prop} Let $A$ the connected cochain DG algebra in Example \ref{exmw}. Then $$\mathrm{DPic}_kA\cong \Bbb{Z}\times  \left\{ \left(
                         \begin{array}{ccc}
                           a & b & c\\
                           0 & a^2 & 2ab\\
                           0 & 0 & a^3\\
                         \end{array}
                       \right)\quad | \quad a\in \k^{\times}, b,c\in \k \right\}.$$
\end{prop}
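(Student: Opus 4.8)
The plan is to follow exactly the strategy of Proposition \ref{trivialdgfree} and Proposition \ref{trivialpoly}: produce an explicit minimal semi-free resolution of ${}_Ak$, read off the Ext-algebra $E$ as the degree-zero cocycles of its endomorphism DG algebra, identify $E$ with a concrete finite-dimensional local algebra $\mathcal{E}$, compute $\mathrm{Aut}_k(\mathcal{E})$, and then invoke Theorem \ref{dpd} together with Remark \ref{imprem}.

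First I would build a minimal semi-free resolution $f\colon F\to k$ by the Eilenberg--Moore procedure, starting from $F(0)=A$ and adjoining degree-zero semi-basis elements $e_0=1,e_1,e_2,e_3$ with
$$\partial_F(e_0)=0,\quad \partial_F(e_1)=ye_0,\quad \partial_F(e_2)=xe_0+ye_1,\quad \partial_F(e_3)=xe_1+ye_2.$$
The correction term $ye_1$ in $\partial_F(e_2)$ is forced by $\partial_A(x)=y^2$, so that $\partial_F^2(e_2)=y^2e_0-y^2e_0=0$; the last generator $e_3$ is needed precisely to annihilate the surviving cohomology class $[xe_1+ye_2]$, and here $\partial_F^2(e_3)=-(xy+yx)e_0=0$ uses the defining relation of $A^{\#}$. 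This exhibits a finite semi-basis concentrated in degree $0$, reconfirming that $A$ is homologically smooth and Koszul, and by minimality $\Hom_A(F,k)=H(\Hom_A(F,k))$ is spanned by $1^*,e_1^*,e_2^*,e_3^*$, so that $E=Z^0(\Hom_A(F,F))$ is concentrated in degree $0$.

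Next, since $F^{\#}$ is free on four degree-zero generators, $\Hom_A(F,F)^0\cong M_4(k)$, and a map $f$ lies in $Z^0$ iff its matrix commutes with the differential matrix $D=xX+yY$, where $Y$ is the lower Jordan shift and $X=Y^2$. Because $x,y$ are linearly independent in $A^1$, this is equivalent to commuting with $Y$ alone, whence $E\cong\mathcal{E}:=k[Y]/(Y^4)\cong k[t]/(t^4)$, the algebra of lower-triangular Toeplitz $4\times 4$ matrices. I would then compute $\mathrm{Aut}_k(\mathcal{E})$: an automorphism sends $t\mapsto at+bt^2+ct^3$ with $a\in k^{\times}$, and squaring and cubing give $t^2\mapsto a^2t^2+2abt^3$ and $t^3\mapsto a^3t^3$ modulo $t^4$, which in the ordered basis $\{t,t^2,t^3\}$ produces exactly the displayed matrix group. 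Since $\mathcal{E}$ is commutative local we have $\mathrm{Aut}_k(\mathcal{E})\cong\mathrm{Out}_k(\mathcal{E})$, so Remark \ref{imprem} yields $\mathrm{DPic}_k(\mathcal{E})\cong\Bbb{Z}\times\mathrm{Out}_k(\mathcal{E})$, and Theorem \ref{dpd} gives $\mathrm{DPic}(A)\cong\mathrm{DPic}_k(E)\cong\mathrm{DPic}_k(\mathcal{E})$, the asserted group.

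The main obstacle is the first step: pinning down the correct minimal semi-free resolution and verifying that the four generators above genuinely yield a complex quasi-isomorphic to $k$, with no higher cohomology surviving, since the differential entangles the algebra relation $xy+yx=0$ with the internal differential $\partial_A(x)=y^2$. Once the resolution and its differential matrix $D$ are in hand, the identification $E\cong k[t]/(t^4)$ and the automorphism-group computation are routine, the only delicate point being the cross term $2ab$ arising from the square of $at+bt^2+ct^3$.
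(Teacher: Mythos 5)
Your proposal is correct and takes essentially the same approach as the paper's own proof: the identical minimal semi-free resolution with differential $\partial_F(\Sigma e_y)=y$, $\partial_F(\Sigma e_z)=x+y\Sigma e_y$, $\partial_F(\Sigma e_t)=x\Sigma e_y+y\Sigma e_z$ (which the paper simply cites from \cite[Section 2]{MW2} rather than rebuilding by Eilenberg--Moore), the identification $E\cong k[t]/(t^4)$ as the commutant of the differential matrix, the computation of $\mathrm{Aut}_k(k[t]/(t^4))$, and the final appeal to Theorem \ref{dpd} and Remark \ref{imprem}. Your Jordan-block observation for the commutant and the polynomial-basis derivation of the matrices $\left(\begin{smallmatrix} a & b & c\\ 0 & a^2 & 2ab\\ 0 & 0 & a^3\end{smallmatrix}\right)$ are only cleaner packagings of the paper's direct matrix computations.
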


\begin{proof}
By \cite[Section 2]{MW2}, the trivial DG module $k$ over the DG algebra $A$ admits a minimal semi-free resolution $F$ such that $$F^{\#}=A^{\#}\oplus A^{\#}\Sigma e_y\oplus A^{\#}\Sigma e_z\oplus A^{\#}\Sigma e_t$$ with $\partial_{F} (\Sigma e_y)=y, \partial_{F_k}(\Sigma e_z)=x+y\Sigma e_y$ and $\partial_{F}(\Sigma e_t)=x\Sigma e_y + y\Sigma e_z$. Since $F$ has a semi-basis $\{1,\Sigma e_y,\Sigma e_z, \Sigma e_t\}$ concentrated in degree $0$,  $A$ is a Koszul and homologically smooth DG algebra.
By the minimality of $F$, we have $$H(\Hom_A(F,k))=\Hom_A(F,k)= k\cdot 1^*\oplus k\cdot(\Sigma e_y)^*\oplus k \cdot(\Sigma e_z)^*\oplus k \cdot (\Sigma e_t)^*.$$
  So the Ext-algebra $E=H(\Hom_A(F,F))$  is concentrated in degree $0$. On the other hand, $$\Hom_A(F,F)^{\#}\cong (k \cdot 1^*\oplus k \cdot (\Sigma e_y)^*\oplus k \cdot (\Sigma e_z)^*\oplus k \cdot (\Sigma e_t)^*)\otimes_{k} F^{\#}$$ is concentrated in degree $\ge 0$. This implies that $E= Z^0(\Hom_A(F,F))$.
Since $F^{\#}$ is a free graded $A^{\#}$-module with a basis
$\{1,\Sigma e_y,\Sigma e_z, \Sigma e_t\}$ concentrated in degree $0$,
  the elements in  $\Hom_A(F,F)^0$ is in one to one correspondence with the matrixes in $M_n(k)$. Indeed, any $f\in \Hom_A(F,F)^0$ is uniquely determined by
  a matrix $A_f=(a_{ij})_{4\times 4}\in M_4(\k)$ with
$$\left(
                         \begin{array}{c}
                          f(1) \\
                          f(\Sigma e_y)\\
                          f(\Sigma e_z)\\
                          f(\Sigma e_t)\\
                         \end{array}
                       \right) =      A_f \cdot \left(
                         \begin{array}{c}
                          1 \\
                          \Sigma e_y\\
                          \Sigma e_z\\
                          \Sigma e_t\\
                         \end{array}
                       \right).  $$
                       And $f\in  Z^0(\Hom_A(F,F)$ if and only if $\partial_{F}\circ f=f\circ \partial_{F}$, if and only if
 $$ A_f\cdot \left(
                         \begin{array}{cccc}
                           0 & 0& 0 & 0\\
                           y & 0 & 0 & 0\\
                           x & y & 0 & 0\\
                          0 & x & y & 0 \\
                         \end{array}
                       \right) =  \left(
                         \begin{array}{cccc}
                           0 & 0& 0 & 0\\
                           y & 0 & 0 & 0\\
                           x & y & 0 & 0\\
                          0 & x & y & 0 \\
                         \end{array}
                       \right) \cdot A_f, $$  which is also equivalent to
                       $$\begin{cases}
                       a_{12}=a_{13}=a_{14}=a_{23}=a_{24}=a_{34}=0\\
                       a_{11}=a_{22}=a_{33}=a_{44}\\
                       a_{21}=a_{32}=a_{43}
                       \end{cases}$$
by direct computations. Hence the algebra $$ E\cong \left\{ \left(
                         \begin{array}{cccc}
                           a & 0& 0& 0\\
                           b & a & 0 & 0\\
                           c & b & a & 0\\
                           d & c & b & a\\
                         \end{array}
                       \right)\quad | \quad a,b,c,d\in k \right \} = \mathcal{E}.$$
                       Set \begin{align*} e_1= \left(
                         \begin{array}{cccc}
                           1 & 0& 0& 0\\
                           0 & 1 & 0 & 0\\
                           0 & 0 & 1 & 0\\
                           0 & 0 & 0 & 1\\
                         \end{array}
                       \right),& e_2= \left(
                         \begin{array}{cccc}
                           0 & 0& 0& 0\\
                           1 & 0 & 0 & 0\\
                           0 & 1 & 0 & 0\\
                           0 & 0 & 1 & 0\\
                         \end{array}
                       \right),\\
                        e_3= \left(
                         \begin{array}{cccc}
                           0 & 0& 0& 0\\
                           0 & 0 & 0 & 0\\
                           1 & 0 & 0 & 0\\
                           0 & 1 & 0 & 0\\
                         \end{array}
                       \right), & e_4= \left(
                         \begin{array}{cccc}
                           0 & 0& 0& 0\\
                           0 & 0 & 0 & 0\\
                           0 & 0 & 0 & 0\\
                           1 & 0 & 0 & 0\\
                         \end{array}
                       \right).
                       \end{align*}
                     Then $\{e_1,e_2,e_3,e_4\}$ is a $k$-linear bases of the $k$-algebra
                        $\mathcal{E}$. The multiplication on $\mathcal{E}$ is defined by the following relations
                       $$\begin{cases} e_1\cdot e_i=e_i\cdot e_1=e_i, i=1,2,3,4 \\
                        e_2^2=e_3, e_2\cdot e_3=e_3\cdot e_2 =e_4, e_2\cdot e_4=e_4\cdot e_2=0  \\
                       e_3^2=e_3\cdot e_4=e_4^2=0
                       \end{cases} .$$
Hence $\mathcal{E}$ is a local commutative $k$-algebra isomorphic to $k[x]/(x^4)$.
 Since $\{e_1,e_2,e_3,e_4\}$ is a $k$-linear basis of $\mathcal{E}$, any $k$-linear map $\sigma: \mathcal{E}\to \mathcal{E}$ uniquely corresponds to a matrix in $C_{\sigma}=(c_{ij})_{4\times 4}\in M_4(\k)$
with $$\left(
                         \begin{array}{c}
                          \sigma(e_1) \\
                          \sigma(e_2)\\
                          \sigma(e_3)\\
                          \sigma(e_4)\\
                         \end{array}
                       \right) =      C_{\sigma} \cdot \left(
                         \begin{array}{c}
                          e_1 \\
                          e_2\\
                          e_3\\
                          e_4\\
                         \end{array}
                       \right).  $$
                       Such $\sigma \in \mathrm{Aut}_k(\mathcal{E})$ if and only if
                      $$C_{\sigma}\in \mathrm{GL}_4(\k) \quad \text{and}\quad \sigma (e_i\cdot e_j)=\sigma (e_i)\sigma (e_j), \,\,\text{for any}\,\, i,j=1,\cdots, 4.$$
Therefore, $\sigma \in \mathrm{Aut}_k(\mathcal{E})$ if and only if
$$\begin{cases}
|(c_{ij})_{4\times 4}|\neq 0, \sigma(e_1)=e_1  \\
 [\sigma(e_2)]^2=\sigma(e_3), [\sigma(e_3)]^2=[\sigma(e_4)]^2=0\\
\sigma(e_2)\cdot \sigma(e_3)=\sigma(e_3)\cdot\sigma(e_2)=\sigma(e_4) \\
\sigma(e_3)\cdot \sigma(e_4)=\sigma(e_4)\cdot \sigma(e_3)=0
\end{cases} \Longleftrightarrow \begin{cases}
c_{22}\neq 0, c_{11}=1, c_{12}=c_{13}=c_{14}=0 \\
c_{21}=c_{31}=c_{32}=c_{41}=c_{42}=c_{43}=0\\
c_{33}=c_{22}^2, c_{44}=c_{22}^3, c_{34}=2c_{22}c_{23}
\end{cases}
$$
Then we get \begin{align*}
\mathrm{Aut}_k(\mathcal{E})& \cong  \left\{ \left(
                         \begin{array}{cccc}
                           1 & 0& 0& 0\\
                           0 & a & b & c\\
                           0 & 0 & a^2 & 2ab\\
                           0 & 0 & 0 & a^3\\
                         \end{array}
                       \right)\quad | \quad a\in k^{\times}, b,c\in \k \right\}\\
                       &\cong \left\{ \left(
                         \begin{array}{ccc}
                           a & b & c\\
                           0 & a^2 & 2ab\\
                           0 & 0 & a^3\\
                         \end{array}
                       \right)\quad | \quad a\in k^{\times}, b,c\in \k \right\}.
                       &\cong
\end{align*}
Since $\mathcal{E}$ is commutative, we have $\mathrm{Aut}_k(\mathcal{E})\cong \mathrm{Out}_k(\mathcal{E})$.
 By Remark \ref{imprem}, we have $\mathrm{Pic}_k (\mathcal{E})\cong \mathrm{Out}_k(\mathcal{E})$ and $\mathrm{DPic}_k(\mathcal{E})=\Bbb{Z}\times \mathrm{Pic}_k (\mathcal{E})$.
Thus
$$\mathrm{Pic}_k (E)\cong \mathrm{Pic}_k (\mathcal{E})\cong  \left\{ \left(
                         \begin{array}{ccc}
                            a & b  & c\\
                            0 & a^2 & 2ab\\
                            0 & 0 & a^3\\
                         \end{array}
                       \right)\quad | \quad a\in \k^{\times}, b,c\in \k \right\},$$
 and Theorem \ref{dpd} implies  $$\mathrm{DPic}(A)\cong \mathrm{DPic}_k (E)^{op}\cong \Bbb{Z}\times \left\{ \left(
                         \begin{array}{ccc}
                            a & b & c\\
                            0 & a^2 & 2ab\\
                            0 & 0 & a^3\\
                         \end{array}
                       \right)\quad | \quad a\in k^{\times}, b,c\in k \right\}.$$

\end{proof}

\begin{exa}\label{dgfreesp}\cite{MH}
Let $A$ be the DG free algebra such that $A^{\#}=k\langle x,y\rangle/(xy+yx)$, $|x|=|y|=1$ and $\partial_{\mathcal{A}}$ is defined by
$\partial_{A}(x)=x^2+y^2=\partial_{A}(y)$.  By \cite[Proposition 3.3(7)]{MH}, we have $H(A)=k[\lceil y-x \rceil,\lceil y^2\rceil]/(\lceil y-x\rceil^2)$. One sees that $H(A)$ is neither Koszul nor regular. In spite of this $A$ is a Koszul Calabi-Yau DG algebra by \cite[Proposition 4.3]{MH}. We can apply Theorem \ref{dpd} to compute its derived Picard group.
\end{exa}

\begin{prop}
Let $A$ be the connected cochain DG algebra in Example \ref{dgfreesp}.
Then $$\mathrm{DPic}(A)\cong \Bbb{Z}\times [k \rtimes \mathrm{SL}_1(k)]\rtimes k^{\times}.$$
\end{prop}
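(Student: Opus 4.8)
The plan is to apply Theorem \ref{dpd}, which reduces the computation of $\mathrm{DPic}(A)$ to that of $\mathrm{DPic}(E)$ for the Ext-algebra $E=H(R\Hom_A(k,k))$, and then to identify $E$ explicitly as a finite-dimensional local algebra. Since $A$ is homologically smooth and Koszul by \cite[Proposition 4.3]{MH}, Theorem \ref{dpd} together with Remark \ref{imprem} shows that the whole problem comes down to computing $\mathrm{Out}_k(E)$. So after identifying $E$, the remaining work is purely the automorphism-group computation of a small commutative algebra.

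First I would build a minimal semi-free resolution $f\colon F\to k$ of the trivial DG module ${}_Ak$. The guiding observation is that, up to scalar, the only degree-one cocycle of $A$ is $y-x$: indeed $\partial_A(\alpha x+\beta y)=(\alpha+\beta)(x^2+y^2)$, and since $xy+yx=0$ one has $(y-x)^2=x^2+y^2$. Following the pattern of the resolution after Example \ref{dgfree}, I expect $F^{\#}=A^{\#}\oplus A^{\#}\Sigma e_1\oplus A^{\#}\Sigma e_2$ with $\partial_F(\Sigma e_1)=y-x$ and $\partial_F(\Sigma e_2)=x+(y-x)\Sigma e_1$. The identity $(y-x)^2=x^2+y^2$ gives $\partial_F^2=0$, the semi-basis $\{1,\Sigma e_1,\Sigma e_2\}$ lies in degree $0$, and $\partial_F(F)\subseteq A^{\ge 1}F$, so $F$ is minimal. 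The main obstacle is precisely to verify that these two generators already annihilate all higher cohomology, i.e.\ that no third syzygy generator is required; this is the point where the relation $xy+yx=0$ and the specific differential must be used carefully, and it is what forces $E$ to be three-dimensional.

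Given $F$, minimality yields $H(\Hom_A(F,k))=\Hom_A(F,k)$ and $E=Z^0(\Hom_A(F,F))$, concentrated in degree $0$. Writing a degree-zero $A$-linear endomorphism as a matrix $A_f=(a_{ij})\in M_3(k)$ acting on the ordered basis $(1,\Sigma e_1,\Sigma e_2)$, the cocycle condition $\partial_F\circ f=f\circ\partial_F$ becomes the matrix identity $A_fM=MA_f$, where $M=\left(\begin{smallmatrix} 0&0&0\\ y-x&0&0\\ x&y-x&0\end{smallmatrix}\right)$. Since $y-x$ and $x$ are linearly independent in $A^1$, comparing coefficients forces $a_{12}=a_{13}=a_{23}=0$, $a_{11}=a_{22}=a_{33}$ and $a_{21}=a_{32}$, so that $E\cong\mathcal{E}=\left\{\left(\begin{smallmatrix} a&0&0\\ b&a&0\\ c&b&a\end{smallmatrix}\right):a,b,c\in k\right\}$, the algebra of lower-triangular Toeplitz matrices, which is the local commutative algebra $k[t]/(t^3)$.

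Finally I would record that an automorphism of $k[t]/(t^3)$ is determined by $t\mapsto at+bt^2$ with $a\in k^{\times}$, so that $\mathrm{Aut}_k(\mathcal{E})\cong[k\rtimes\mathrm{SL}_1(k)]\rtimes k^{\times}$, exactly as in the computation following Example \ref{dgfree}; as $\mathcal{E}$ is commutative, $\mathrm{Aut}_k(\mathcal{E})=\mathrm{Out}_k(\mathcal{E})$. Then Remark \ref{imprem} gives $\mathrm{DPic}(\mathcal{E})\cong\Bbb{Z}\times\mathrm{Pic}_k(\mathcal{E})\cong\Bbb{Z}\times\mathrm{Out}_k(\mathcal{E})$, and Theorem \ref{dpd} yields $\mathrm{DPic}(A)\cong\mathrm{DPic}(E)\cong\Bbb{Z}\times[k\rtimes\mathrm{SL}_1(k)]\rtimes k^{\times}$. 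It is worth noting that this coincides with the answer for the DG free algebra of Example \ref{dgfree}, reflecting that both DG algebras have Ext-algebra $k[t]/(t^3)$.
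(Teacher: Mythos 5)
Your proposal is correct and takes essentially the same route as the paper: the same reduction via Theorem \ref{dpd} and Remark \ref{imprem}, the same minimal semi-free resolution (your $\partial_F(\Sigma e_1)=y-x$, $\partial_F(\Sigma e_2)=x+(y-x)\Sigma e_1$ is the paper's $\partial_F(e_z)=x-y$, $\partial_F(e_t)=x+(x-y)e_z$ after replacing $e_z$ by $-\Sigma e_1$), the same identification of $E$ with the lower-triangular Toeplitz algebra $k[t]/(t^3)$ from the cocycle condition $A_fM=MA_f$, and the same automorphism-group computation. The one step you flag as the ``main obstacle'' --- that no third syzygy generator is required, i.e.\ that $F\to k$ is a quasi-isomorphism --- is precisely what the paper discharges by citing the proof of \cite[(A2)]{MH}, so your outline is complete once that citation (or a direct check that $H(F)\cong k$) is supplied.
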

\begin{proof}
By \cite[Proposition 4.3]{MH}, $A$ is a Koszul Calabi-Yau DG algebra. From the proof of \cite[(A2)]{MH}, one sees that ${}_Ak$ admits a minimal semi-free resolution $F$ with $$F^{\#}=A^{\#}\oplus A^{\#}e_z\oplus A^{\#}e_t$$ and a differential $\partial_F$ defined by $\partial_F(e_z)=x-y, \partial_F(e_t)=x+(x-y)e_z$.
By the minimality of $F$, we have $$H(\Hom_A(F,k))=\Hom_A(F,k)= k\cdot 1^*\oplus k\cdot(e_z)^*\oplus k \cdot(e_t)^*.$$
  So the Ext-algebra $E=H(\Hom_A(F,F))$  is concentrated in degree $0$. On the other hand, $$\Hom_A(F,F)^{\#}\cong (k \cdot 1^*\oplus k \cdot (e_z)^*\oplus k \cdot (e_t)^*)\otimes_{k} F^{\#}$$ is concentrated in degree $\ge 0$. This implies that $E= Z^0(\Hom_A(F,F))$.
Since $F^{\#}$ is a free graded $A^{\#}$-module with a basis
$\{1, e_z, e_t\}$ concentrated in degree $0$,
  the elements in  $\Hom_A(F,F)^0$ is one to one correspondence with the matrixes in $M_3(k)$. Indeed, any $f\in \Hom_A(F,F)^0$ is uniquely determined by
  a matrix $A_f=(a_{ij})_{3\times 3}\in M_3(\k)$ with
$$\left(
                         \begin{array}{c}
                          f(1) \\
                          f(e_z)\\
                          f(e_t)\\
                         \end{array}
                       \right) =      A_f \cdot \left(
                         \begin{array}{c}
                          1 \\
                           e_z\\
                           e_t\\
                         \end{array}
                       \right).  $$
                       And $f\in  Z^0(\Hom_A(F,F)$ if and only if $\partial_{F}\circ f=f\circ \partial_{F}$, if and only if
 $$ A_f\cdot \left(
                         \begin{array}{ccc}
                           0 & 0& 0\\
                           x-y & 0 & 0\\
                           x & x-y & 0 \\
                         \end{array}
                       \right) =  \left(
                         \begin{array}{cccc}
                             0 & 0 & 0\\
                           x-y & 0 & 0\\
                           x & x-y & 0 \\
                         \end{array}
                       \right) \cdot A_f, $$  which is also equivalent to
                       $$\begin{cases}
                       a_{12}=a_{13}=a_{23}=0\\
                       a_{11}=a_{22}=a_{33}\\
                       a_{21}=a_{32}
                       \end{cases}$$
by direct computations. Hence the algebra $$ E\cong \left\{ \left(
                         \begin{array}{ccc}
                           a & 0& 0\\
                           b & a & 0 \\
                           c & b & a \\
                         \end{array}
                       \right)\quad | \quad a,b,c\in k \right\} = \mathcal{E}.$$
                       Set \begin{align*} e_1= \left(
                         \begin{array}{ccc}
                           1 & 0& 0\\
                           0 & 1 & 0 \\
                           0 & 0 & 1 \\
                         \end{array}
                       \right),& e_2= \left(
                         \begin{array}{ccc}
                           0 & 0& 0\\
                           1 & 0 & 0 \\
                           0 & 1 & 0 \\
                         \end{array}
                       \right),
                        e_3= \left(
                         \begin{array}{ccc}
                           0 & 0& 0\\
                           0 & 0 & 0 \\
                           1 & 0 & 0 \\
                         \end{array}
                       \right).
                       \end{align*}
 Then $\{e_1,e_2,e_3\}$ is a $k$-linear bases of the $k$-algebra
                        $\mathcal{E}$. The multiplication on $\mathcal{E}$ is defined by the following relations
                       $$\begin{cases} e_1\cdot e_i=e_i\cdot e_1=e_i, i=1,2,3 \\
                        e_2^2=e_3, e_2\cdot e_3=e_3\cdot e_2 =0, \\
                       e_3^2=0
                       \end{cases} .$$
Hence $\mathcal{E}$ is a local commutative $k$-algebra isomorphic to $k[x]/(x^3)$.
 Since $\{e_1,e_2,e_3\}$ is a $k$-linear basis of $\mathcal{E}$, any $k$-linear map $\sigma: \mathcal{E}\to \mathcal{E}$ uniquely corresponds to a matrix in $C_{\sigma}=(c_{ij})_{4\times 4}\in M_3(\k)$
with $$\left(
                         \begin{array}{c}
                          \sigma(e_1) \\
                          \sigma(e_2)\\
                          \sigma(e_3)\\
                         \end{array}
                       \right) =      C_{\sigma} \cdot \left(
                         \begin{array}{c}
                          e_1 \\
                          e_2\\
                          e_3\\
                         \end{array}
                       \right).  $$
                       Such $\sigma \in \mathrm{Aut}_k(\mathcal{E})$ if and only if
                      $$C_{\sigma}\in \mathrm{GL}_3(\k) \quad \text{and}\quad \sigma (e_i\cdot e_j)=\sigma (e_i)\sigma (e_j), \,\,\text{for any}\,\, i,j=1,2,3.$$
Therefore, $\sigma \in \mathrm{Aut}_k(\mathcal{E})$ if and only if
$$\begin{cases}
|(c_{ij})_{3\times 3}|\neq 0, \sigma(e_1)=e_1  \\
 [\sigma(e_2)]^2=\sigma(e_3), [\sigma(e_3)]^2=0\\
\sigma(e_2)\cdot \sigma(e_3)=\sigma(e_3)\cdot\sigma(e_2)=0
\end{cases} \Longleftrightarrow \begin{cases}
c_{22}\neq 0, c_{11}=1, c_{12}=c_{13}=0 \\
c_{21}=c_{31}=c_{32}=0\\
c_{33}=c_{22}^2.
\end{cases}
$$
Then we get \begin{align*}
\mathrm{Aut}_k(\mathcal{E})&\cong  \left\{ \left(
                         \begin{array}{ccc}
                           1 & 0& 0\\
                           0 & a & b \\
                           0 & 0 & a^2\\
                         \end{array}
                       \right)\quad | \quad a\in k^{\times}, b\in k \right\}\\
                       &\cong [k \rtimes \mathrm{SL}_1(k)]\rtimes k^{\times}.
\end{align*}
Since $\mathcal{E}$ is commutative, we have $\mathrm{Aut}_k(\mathcal{E})\cong \mathrm{Out}_k(\mathcal{E})$.
 By Remark \ref{imprem}, we have $\mathrm{Pic}_k (\mathcal{E})\cong \mathrm{Out}_k(\mathcal{E})$ and $\mathrm{DPic}_k(\mathcal{E})=\Bbb{Z}\times \mathrm{Pic}_k (\mathcal{E})$.
Thus
$$\mathrm{Pic}_k (E)\cong \mathrm{Pic}_k (\mathcal{E})\cong  [k \rtimes \mathrm{SL}_1(k)]\rtimes k^{\times},$$ and
 Theorem \ref{dpd} implies  $$\mathrm{DPic}(A)\cong \mathrm{DPic}_k (E)\cong \Bbb{Z}\times [k \rtimes \mathrm{SL}_1(k)]\rtimes k^{\times}.$$
\end{proof}

\begin{exa}\label{dualg}\cite{MHLX}
Let $A$ be the DG down-up algebra such that $$A^{\#}=\frac{k\langle x,y\rangle}{\left(\begin{array}{c}
   x^2y+(1-\xi)xyx-\xi yx^2  \\
   xy^2+(1-\xi)yxy-\xi y^2x  \\
\end{array}\right)},|x|=|y|=1,\xi^3=1, \xi\neq 1$$ and the differential $\partial_{A}$ is defined by
$\partial_{A}(x)=y^2$ and $\partial_{A}(y)=0.$  We have $$H(A)=\frac{\k\langle \lceil xy+yx\rceil, \lceil y\rceil\rangle}{\left(\begin{array}{ccc}
                                  \xi \lceil y\rceil \lceil xy+yx\rceil - \lceil xy+yx\rceil \lceil y\rceil \\
                                   \lceil y^2\rceil \\
                                    \end {array}\right)}$$
                                      by \cite[Proposition 5.5]{MHLX}.
                                     One sees that $H(A)$ is not Koszul and $\mathrm{gl.dim}H(A)=\infty$. In spite of this, $A$ is a Koszul Calabi-Yau DG algebra
by \cite[Proposition 6.1]{MHLX}. We can still apply Theorem \ref{dpd} to compute its derived Picard group.
\end{exa}
\begin{prop}
Let $A$ be the connected cochain DG algebra in Example \ref{dualg}.
Then $$\mathrm{DPic}(A)\cong \Bbb{Z}\times [k \rtimes \mathrm{SL}_1(k)]\rtimes k^{\times}.$$
\end{prop}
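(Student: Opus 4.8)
The plan is to run the same machine used for the previous examples in this section: Theorem \ref{dpd} reduces $\mathrm{DPic}(A)$ to $\mathrm{DPic}_k(E)$ of the Ext-algebra, and Remark \ref{imprem} further reduces this to $\Bbb{Z}\times \mathrm{Out}_k(E)$. Since \cite[Proposition 6.1]{MHLX} already shows that $A$ is a Koszul Calabi-Yau (hence homologically smooth) DG algebra, Lemma \ref{local} guarantees that $E$ is a finite-dimensional local $k$-algebra concentrated in degree $0$, so all hypotheses of Theorem \ref{dpd} are in place. The real work is thus to pin down $E$ explicitly and compute $\mathrm{Out}_k(E)$.

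First I would extract from the proof of \cite[Proposition 6.1]{MHLX} the minimal semi-free resolution $f:F\to k$ of ${}_Ak$. In view of the identical differential $\partial_A(x)=y^2$, $\partial_A(y)=0$ and the matching final answer, I expect $F$ to have the same shape as in Example \ref{dgfree}, namely $F^{\#}=A^{\#}\oplus A^{\#}e_z\oplus A^{\#}e_t$ with semi-basis $\{1,e_z,e_t\}$ concentrated in degree $0$ and a differential of the form $\partial_F(e_z)=($degree-one cocycle$)$ and $\partial_F(e_t)=($second generator$)+($cocycle$)\,e_z$. The semi-basis sitting in degree $0$ re-confirms Koszulness and homological smoothness.

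Next I would compute $E=H(\Hom_A(F,F))$. By minimality the differential vanishes on $\Hom_A(F,k)$, so $\Hom_A(F,k)=k\cdot 1^{*}\oplus k\cdot e_z^{*}\oplus k\cdot e_t^{*}$, and since $\Hom_A(F,F)^{\#}\cong \Hom_A(F,k)\otimes_k F^{\#}$ is concentrated in degrees $\ge 0$, we obtain $E=Z^0(\Hom_A(F,F))$. Writing a degree-$0$ endomorphism as its matrix $A_f=(a_{ij})_{3\times 3}$ in the basis $\{1,e_z,e_t\}$, the cocycle condition $\partial_F\circ f=f\circ\partial_F$ turns into a commutation relation with the strictly lower-triangular matrix encoding $\partial_F$; direct computation should force $a_{12}=a_{13}=a_{23}=0$, $a_{11}=a_{22}=a_{33}$ and $a_{21}=a_{32}$, exhibiting $E$ as the lower-triangular Toeplitz algebra $\mathcal{E}\cong k[x]/(x^3)$.

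From here the argument runs verbatim as for Example \ref{dgfreesp}: with $\mathcal{E}\cong k[x]/(x^3)$ commutative and local, computing $\mathrm{Aut}_k(\mathcal{E})$ yields $\mathrm{Aut}_k(\mathcal{E})\cong[k\rtimes \mathrm{SL}_1(k)]\rtimes k^{\times}$, and commutativity gives $\mathrm{Aut}_k(\mathcal{E})\cong \mathrm{Out}_k(\mathcal{E})$. Remark \ref{imprem} then produces $\mathrm{DPic}_k(\mathcal{E})\cong \Bbb{Z}\times \mathrm{Pic}_k(\mathcal{E})\cong \Bbb{Z}\times[k\rtimes \mathrm{SL}_1(k)]\rtimes k^{\times}$, and Theorem \ref{dpd} delivers the stated isomorphism. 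I expect the main obstacle to be the first step: because $A^{\#}$ is the non-commutative down-up quotient subject to the $\xi$-cubic relations, one must check carefully that these relations neither enlarge the minimal semi-free resolution nor introduce non-commutativity into $E$, so that $E$ still collapses to the three-dimensional Toeplitz algebra $k[x]/(x^3)$; once this is confirmed, the remaining steps are routine matrix computations.
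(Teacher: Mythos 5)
Your proposal is correct and follows essentially the same route as the paper's own proof: cite \cite[Proposition 6.1]{MHLX} for Koszulness, homological smoothness, and the rank-three minimal semi-free resolution (whose differential indeed has the shape you predicted, $\partial_F(\Sigma e_y)=y$ and $\partial_F(\Sigma e_z)=x+y\Sigma e_y$), identify $E=Z^0(\Hom_A(F,F))$ with the $3\times 3$ lower-triangular Toeplitz algebra $k[x]/(x^3)$ via the matrix commutation condition, and then conclude with $\mathrm{Aut}_k\cong\mathrm{Out}_k$, Remark \ref{imprem}, and Theorem \ref{dpd}. The concern you flag about the $\xi$-cubic relations is handled exactly as you anticipate: the cited resolution already has its semi-basis in degree $0$, so $E$ collapses to the commutative three-dimensional algebra and the rest is routine.
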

\begin{proof}
By \cite[Proposition 6.1]{MHLX}, $A$ is a Koszul Calabi-Yau DG algebra. From the proof of \cite[Proposition 6.1]{MHLX}, one sees that ${}_Ak$ admits a minimal semi-free resolution $F$ with $$F^{\#}=A^{\#}\oplus A^{\#}e_y\oplus A^{\#}e_z$$ and a differential $\partial_F$ defined by $\partial_F(\Sigma e_y)=y, \partial_F(e_t)=x+y\Sigma e_y$.
By the minimality of $F$, we have $$H(\Hom_A(F,k))=\Hom_A(F,k)= k\cdot 1^*\oplus k\cdot(\Sigma e_y)^*\oplus k \cdot(\Sigma e_z)^*.$$
  So the Ext-algebra $E=H(\Hom_A(F,F))$  is concentrated in degree $0$. On the other hand, $$\Hom_A(F,F)^{\#}\cong (k\cdot 1^*\oplus k\cdot(\Sigma e_y)^*\oplus k \cdot(\Sigma e_z)^*)\otimes_{k} F^{\#}$$ is concentrated in degree $\ge 0$. This implies that $E= Z^0(\Hom_A(F,F))$.
Since $F^{\#}$ is a free graded $A^{\#}$-module with a basis
$\{1, \Sigma e_y, \Sigma e_z\}$ concentrated in degree $0$,
  the elements in  $\Hom_A(F,F)^0$ is one to one correspondence with the matrixes in $M_3(k)$. Indeed, any $f\in \Hom_A(F,F)^0$ is uniquely determined by
  a matrix $A_f=(a_{ij})_{3\times 3}\in M_3(\k)$ with
$$\left(
                         \begin{array}{c}
                          f(1) \\
                          f(\Sigma e_y)\\
                          f(\Sigma e_z)\\
                         \end{array}
                       \right) =      A_f \cdot \left(
                         \begin{array}{c}
                          1 \\
                           \Sigma e_y\\
                           \Sigma e_z\\
                         \end{array}
                       \right).  $$
                       And $f\in  Z^0(\Hom_A(F,F)$ if and only if $\partial_{F}\circ f=f\circ \partial_{F}$, if and only if
 $$ A_f\cdot \left(
                         \begin{array}{ccc}
                           0 & 0& 0\\
                           y & 0 & 0\\
                           x & y & 0 \\
                         \end{array}
                       \right) =  \left(
                         \begin{array}{cccc}
                             0 & 0 & 0\\
                           y & 0 & 0\\
                           x & y & 0 \\
                         \end{array}
                       \right) \cdot A_f, $$  which is also equivalent to
                       $$\begin{cases}
                       a_{12}=a_{13}=a_{23}=0\\
                       a_{11}=a_{22}=a_{33}\\
                       a_{21}=a_{32}
                       \end{cases}$$
by direct computations. Hence the algebra $$ E\cong \left\{ \left(
                         \begin{array}{ccc}
                           a & 0& 0\\
                           b & a & 0 \\
                           c & b & a \\
                         \end{array}
                       \right)\quad | \quad a,b,c\in k \right\} = \mathcal{E}.$$
                       Set \begin{align*} e_1= \left(
                         \begin{array}{ccc}
                           1 & 0& 0\\
                           0 & 1 & 0 \\
                           0 & 0 & 1 \\
                         \end{array}
                       \right),& e_2= \left(
                         \begin{array}{ccc}
                           0 & 0& 0\\
                           1 & 0 & 0 \\
                           0 & 1 & 0 \\
                         \end{array}
                       \right),
                        e_3= \left(
                         \begin{array}{ccc}
                           0 & 0& 0\\
                           0 & 0 & 0 \\
                           1 & 0 & 0 \\
                         \end{array}
                       \right).
                       \end{align*}
 Then $\{e_1,e_2,e_3\}$ is a $k$-linear bases of the $k$-algebra
                        $\mathcal{E}$. The multiplication on $\mathcal{E}$ is defined by the following relations
                       $$\begin{cases} e_1\cdot e_i=e_i\cdot e_1=e_i, i=1,2,3 \\
                        e_2^2=e_3, e_2\cdot e_3=e_3\cdot e_2 =0, \\
                       e_3^2=0
                       \end{cases} .$$
Hence $\mathcal{E}$ is a local commutative $k$-algebra isomorphic to $k[x]/(x^3)$.
 Since $\{e_1,e_2,e_3\}$ is a $k$-linear basis of $\mathcal{E}$, any $k$-linear map $\sigma: \mathcal{E}\to \mathcal{E}$ uniquely corresponds to a matrix in $C_{\sigma}=(c_{ij})_{4\times 4}\in M_3(\k)$
with $$\left(
                         \begin{array}{c}
                          \sigma(e_1) \\
                          \sigma(e_2)\\
                          \sigma(e_3)\\
                         \end{array}
                       \right) =      C_{\sigma} \cdot \left(
                         \begin{array}{c}
                          e_1 \\
                          e_2\\
                          e_3\\
                         \end{array}
                       \right).  $$
                       Such $\sigma \in \mathrm{Aut}_k(\mathcal{E})$ if and only if
                      $$C_{\sigma}\in \mathrm{GL}_3(\k) \quad \text{and}\quad \sigma (e_i\cdot e_j)=\sigma (e_i)\sigma (e_j), \,\,\text{for any}\,\, i,j=1,2,3.$$
Therefore, $\sigma \in \mathrm{Aut}_k(\mathcal{E})$ if and only if
$$\begin{cases}
|(c_{ij})_{3\times 3}|\neq 0, \sigma(e_1)=e_1  \\
 [\sigma(e_2)]^2=\sigma(e_3), [\sigma(e_3)]^2=0\\
\sigma(e_2)\cdot \sigma(e_3)=\sigma(e_3)\cdot\sigma(e_2)=0
\end{cases} \Longleftrightarrow \begin{cases}
c_{22}\neq 0, c_{11}=1, c_{12}=c_{13}=0 \\
c_{21}=c_{31}=c_{32}=0\\
c_{33}=c_{22}^2.
\end{cases}
$$
Then we get \begin{align*}\mathrm{Aut}_k(\mathcal{E}) &\cong \left \{ \left(
                         \begin{array}{ccc}
                           1 & 0& 0\\
                           0 & a & b \\
                           0 & 0 & a^2\\
                         \end{array}
                       \right)\quad | \quad a\in k^{\times}, b\in \k \right\}\\
                       &\cong [k \rtimes \mathrm{SL}_1(k)]\rtimes k^{\times}.
\end{align*}
Since $\mathcal{E}$ is commutative, we have $\mathrm{Aut}_k(\mathcal{E})\cong \mathrm{Out}_k(\mathcal{E})$.
 By Remark \ref{imprem}, we have $\mathrm{Pic}_k (\mathcal{E})\cong \mathrm{Out}_k(\mathcal{E})$ and $\mathrm{DPic}_k(\mathcal{E})=\Bbb{Z}\times \mathrm{Pic}_k (\mathcal{E})$.
Hence
$$\mathrm{Pic}_k (E)\cong [k \rtimes \mathrm{SL}_1(k)]\rtimes k^{\times}, $$ and
Theorem \ref{dpd} implies  $$\mathrm{DPic}(A)\cong \mathrm{DPic}_k (E)\cong \Bbb{Z}\times [k \rtimes \mathrm{SL}_1(k)]\rtimes k^{\times}.$$

\end{proof}

\section*{Acknowledgments} The first author is supported by grants from NSFC (No. 11871326), Innovation Program of
Shanghai Municipal Education Commission (No. 12YZ031), the Key Disciplines of Shanghai Municipality (No. S30104) and the Discipline project at the corresponding level of Shanghai (No. A.13010112005).
The third author is supported by NSFC (No. 11571239) and a grant from NSF of Zhejiang province (No. LY14A01006).


\bibliography{}

\end{document}